\newtheorem{remark}{Remark}
\newtheorem{example}{Example}
\newtheorem{conjecture}{Conjecture}
\newtheorem{proposition}{Proposition}
\newtheorem{corollary}{Corollary}
\newtheorem{theorem}{Theorem}
\newtheorem{definition}{Definition}
\newtheorem{proof}{Proof}
\newtheorem{keywords}{Keywords}
\newtheorem{AMS}{AMS}
\newtheorem{lemma}{Lemma}
\begin{document}
\thispagestyle{empty}
\noindent
\textbf{ALGEBRAS GROUPS AND GEOMETRIES 37} 193-237(2021)  \\
DOI: 10.29083/AGG.37.02.2021
\begin{center}\textbf{HALIDON RINGS AND THEIR APPLICATIONS} \end{center}

\begin{center} A. Telveenus	\\
Kingston University London International Study Centre,\\ Stable Block, Kingston Hill Campus, KT2 7LB,UK \\	
email: t.fernandezantony@kingston.ac.uk \\ \end{center} 
\footnote{Former Prof. \& Head, Dept. of Mathematics,Fatima Mata National College, University of Kerala, Kollam, S. India . Supported by no grants from any sources.}	

\begin{abstract}
 Halidon rings are rings with a unit element, containing a primitive $m^{th}$ root of unity and $m$ is invertible in the ring. The field of complex numbers is a halidon ring with any index $ m \geq 1$. This article examines different applications of halidon rings. The main application is the extension of Maschke's theorem. In representation theory, Maschke's theorem has an important role in studying the irreducible subrepresentations of a given group representation and this study is related to a finite field of characteristic which does not divide the order of the given finite group or the field of real or complex numbers. The second application is the computational aspects of halidon rings and group rings which enable us to verify  Maschke's theorem. Some computer codes have been developed to establish the existence of halidon rings which are not fields and the computation of units, involutions and idempotents in both halidon rings and halidon group rings. The third application of halidon rings is in solving Rososhek’s problem related to some cryptosystems. The applications in Bilinear forms and Discrete Fourier Transform (DFT) with two computer codes developed to calculate DFT and inverse DFT have also been discussed.
\end{abstract}

\begin{keywords}
Maschke's theorem; Primitive $m^{th}$ roots of unity;  halidon rings; Rososhek’s problem, bilinear forms;  Discrete Fourier Transform.
\end{keywords}
\begin{AMS}
16S34,20C05, 68W30
\end{AMS}



\section{Introduction}
In 1940, the famous celebrated mathematician Graham Higman published a theorem \cite{gh} in group algebra which is valid only for a field or an integral domain with some specific conditions. In 1999, the author noticed that this theorem can be extended to a rich class of rings called halidon rings\cite{at}. In complex analysis, $|z|=1$ is the set of all points on a circle with centre at the origin and radius $1$. The solutions of $z^{n}=1$, which are usually called the $n^{th}$ roots of unity, which can be computed by DeMoivre's theorem, will form a regular polygon with vertices as the solutions. As $n\longrightarrow \infty $ this regular polygon will become a unit circle. This is the ordinary concept of $n^{th}$ of unity in the field of complex numbers. For a ring, we need to think differently. \\

A primitive $m^{th}$ root of unity in a ring with unit element is completely different from that of in a field, because of the presence of nonzero zero divisors.  So we need a separate definition for a primitive $m^{th}$ root of unity. An element $\omega $ in a ring $R$ is called a  \textit{primitive} $m^{th}$ root if $m$ is the least positive integer such that  $\omega^{m}=1$ and
\begin{eqnarray*}
\sum_{r=0}^{m-1} \omega^{r(i-j)}&=& m, \quad  i= j (\ mod \ m )\\ &=& 0, \quad  i\neq j (\ mod \ m ). \end{eqnarray*}
More explicitly,
\begin{eqnarray*}
1+ \omega^{r}+(\omega^{r})^{2}+(\omega^{r})^{3}+(\omega^{r})^{4}+......+(\omega^{r})^{m-1}&=& m, \quad  r=0 \\ &=& 0, \quad 0<r\leq m-1. \end{eqnarray*} \\

A ring $R$ with unity is called a \textit{halidon} ring with index $m$ if there is a  primitive $m^{th}$ root of unity and $m$ is invertible in $R$. The ring of integers is a halidon ring with index $ m=1$ and $\omega=1$. The halidon ring with index $1$ is usually called a \textit{trivial} halidon ring. The field of real numbers is a halidon ring with index $m=2$ and $\omega = -1$. The field $\mathbb{Q}$ $(i)=\{ a+ib | a,b \in$ $\mathbb{Q}$ $\}$ is a halidon ring with $\omega=i$ and $m=4$. $\mathbb{Z}_{65}$ is an example of a  halidon ring with index 4 and $\omega =8$ which is not a field. In general $\mathbb{Z}_{4r^{2}+1}$ is a halidon ring  with index 4 and $\omega =2r$ for each integer $r>0$. $\mathbb{Z}_{p}$ is a halidon ring with index $p-1$ for every prime $p$. Interestingly, $\mathbb{Z}_{p^{k}}$ is also a halidon ring with same index for any integer $k>0$ and it is not a field if $k>1.$ Note that if $\omega$ is a primitive $m^{th}$ root of unity, then $\omega^{-1}$ is also a primitive $m^{th}$ root of unity.
\section{Preliminary results}
Let $U(R)$ and $ZD(R)$ denote the unit group of $R$ and the set of zero divisors in $R$ respectively. Clearly, they are disjoint sets for a finite commutative halidon ring $R$.
\begin{lemma} \label{rd3}
A finite commutative ring $R$ with unity is a halidon ring with index $m$ if and only if there is a primitive $m^{th}$ root of unity  $\omega$ such that $m$, $\omega^{r}-1 \in U(R)$; the unit group of $R$ for all $r=1,.., m-1$. If $m$ is a prime number it is enough to have $m$, $\omega -1 \in U(R)$.
\end{lemma}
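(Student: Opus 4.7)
The plan is to exploit the fact that in any finite commutative ring with unity, every element is either a unit or a zero divisor (including $0$): the map $y \mapsto xy$ is either injective on the finite set $R$, hence surjective and thus producing $y$ with $xy = 1$, or else $x$ annihilates some nonzero element. So to show $\omega^{r}-1 \in U(R)$ it will suffice to check that $\omega^{r}-1 \ne 0$ and that its annihilator is trivial.

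For the forward direction, assume $R$ is halidon with primitive $m$-th root $\omega$ and fix $r \in \{1,\dots,m-1\}$. Minimality of $m$ forces $\omega^{r} \ne 1$, so $\omega^{r}-1 \ne 0$. If $(\omega^{r}-1)a = 0$, then $\omega^{r}a = a$; iterating gives $\omega^{kr}a = a$ for every $k \ge 0$, so summing over $k = 0,\dots,m-1$ and invoking the sum clause in the definition of a primitive $m$-th root (applied to $r \not\equiv 0 \pmod m$) gives
\[
0 = \Big(\sum_{k=0}^{m-1} \omega^{kr}\Big)\,a = m\,a.
\]
Since $m \in U(R)$ this forces $a=0$, so $\omega^{r}-1$ has trivial annihilator and is therefore a unit.

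For the reverse direction I would set $s=\omega^{r}$ with $0<r<m$ and use the factorisation
\[
(s-1)\sum_{k=0}^{m-1} s^{k} = s^{m}-1 = 0;
\]
multiplying by the inverse of $s-1=\omega^{r}-1$ delivers the required $\sum_{k=0}^{m-1} s^{k}=0$, while the case $r \equiv 0 \pmod m$ is the obvious identity $\sum_{k=0}^{m-1} 1 = m$. For the prime refinement it only remains to upgrade $\omega - 1 \in U(R)$ to $\omega^{r}-1 \in U(R)$ for $r=2,\dots,m-1$: as $m$ is prime one has $\gcd(r,m)=1$, so choosing $k$ with $kr \equiv 1 \pmod m$ and iterating $\omega^{r}a=a$ exactly $k$ times turns $(\omega^{r}-1)a=0$ into $(\omega-1)a=0$, whence $a=0$; this reduces the prime case to the general reverse direction.

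The main obstacle is the forward direction, where the halidon hypothesis only supplies a global sum relation and one must convert it into invertibility of the single factor $\omega^{r}-1$. This conversion genuinely uses both the finite-ring dichotomy (to reduce the problem to producing a nonzero annihilator) and the invertibility of $m$ (to clear the final equation $ma=0$); the reverse direction and the prime-case reduction are then routine consequences of the factorisation $s^{m}-1=(s-1)(1+s+\cdots+s^{m-1})$ and of the fact that $\omega^{r}$ generates the same cyclic group as $\omega$ whenever $\gcd(r,m)=1$.
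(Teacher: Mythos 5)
Your proof is correct, but it is worth recording how it relates to the paper's own proof, which consists of the single identity $(\omega^{r}-1)(1+\omega^{r}+(\omega^{r})^{2}+\cdots+(\omega^{r})^{m-1})=\omega^{rm}-1=0$. That identity is exactly your reverse direction: multiplying by $(\omega^{r}-1)^{-1}$ gives $\sum_{k=0}^{m-1}(\omega^{r})^{k}=0$, which is the sum clause in the paper's definition of a primitive $m^{th}$ root, and invertibility of $m$ is assumed, so $R$ is halidon. In the forward direction, however, the identity alone is vacuous: when $R$ is halidon the second factor is $0$ and the equation degenerates to $(\omega^{r}-1)\cdot 0=0$, so the paper's one-line proof never actually shows that halidon-ness forces $\omega^{r}-1\in U(R)$, nor does it address the prime-index refinement. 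Your annihilator computation — from $(\omega^{r}-1)a=0$ deduce $\omega^{kr}a=a$ for all $k$, sum over $k=0,\dots,m-1$ to get $0=\bigl(\sum_{k}\omega^{kr}\bigr)a=ma$, hence $a=0$ — combined with the unit-or-zero-divisor dichotomy in a finite commutative ring supplies precisely the missing half; note that this dichotomy is the same tool the paper itself isolates as Proposition~\ref{rd15} and deploys in the proof of Theorem~\ref{rd2}, so your argument is very much in the paper's spirit even though it goes beyond what is written in the proof of Lemma~\ref{rd3}. Your prime-case reduction (pick $k$ with $kr\equiv 1 \pmod m$ and iterate $\omega^{r}a=a$ to get $(\omega-1)a=0$) is a clean special case of the Bezout/geometric-series argument of Theorem~\ref{rd2}, which instead shows $(\omega^{r}-1)$ divides $\omega^{\gcd(r,m)}-1=\omega-1$; that divisibility route yields the unit conclusion directly (an element dividing a unit is a unit) without invoking finiteness, whereas your route leans on finiteness twice — harmless here, since the lemma is stated for finite rings, but the divisibility variant shows the prime-case upgrade is not intrinsically a finiteness phenomenon. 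In short: your proposal is complete and correct, and it repairs a genuine gap in the paper's one-line proof of the forward implication.
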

\begin{proof}
The proof follows from the fact that $(\omega^{r}-1)(1+ \omega^{r}+(\omega^{r})^{2}+(\omega^{r})^{3}+(\omega^{r})^{4}+......+(\omega^{r})^{m-1})=\omega^{rm}-1=0$.
\end{proof}
\begin{proposition} \label{rd15}
Let $R$ be a finite commutative halidon ring with index $m$. Then $R=U(R)\cup ZD(R)$ with $U(R)\cap ZD(R)= \{ \}$; the empty set.
\end{proposition}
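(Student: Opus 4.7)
The plan is to prove the two set-theoretic claims separately: first the disjointness $U(R)\cap ZD(R)=\emptyset$, and then the covering identity $R=U(R)\cup ZD(R)$. The halidon hypothesis will play essentially no role; what really drives the result is that $R$ is finite, commutative, and has a unit element.

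For the disjointness, I would argue by contradiction. Suppose $a\in U(R)\cap ZD(R)$. Then there exist $a^{-1}\in R$ and a nonzero $b\in R$ with $ab=0$. Multiplying on the left by $a^{-1}$ gives $b=a^{-1}(ab)=0$, contradicting $b\neq 0$. Hence $U(R)$ and $ZD(R)$ are disjoint. (Here I follow the convention that $ZD(R)$ consists of the nonzero $a\in R$ for which there exists nonzero $b$ with $ab=0$, together with $0$ itself when $R$ is nonzero; the case $a=0$ is handled separately since $0\notin U(R)$.)

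For the covering, I would use the pigeonhole principle, which is the standard route in finite ring theory. Fix $a\in R$ and consider the multiplication map $\mu_a\colon R\to R$ defined by $\mu_a(x)=ax$. Since $R$ is finite, $\mu_a$ is injective if and only if it is surjective. If $\mu_a$ is injective, then surjectivity produces an $x\in R$ with $ax=1$, so $a\in U(R)$. If $\mu_a$ fails to be injective, there exist $x\neq y$ in $R$ with $ax=ay$, and setting $c=x-y\neq 0$ yields $ac=0$, so $a\in ZD(R)$. In either case $a\in U(R)\cup ZD(R)$.

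I do not anticipate any real obstacle: the whole argument is a two-line application of finiteness plus the unit/zero-divisor dichotomy. The only subtlety worth flagging in the write-up is the bookkeeping on whether $0$ is counted among the zero divisors, which has no mathematical content but affects the precise statement $R=U(R)\cup ZD(R)$.
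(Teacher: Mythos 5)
Your proof is correct and is essentially the expansion of the paper's own (one-line) argument: the paper simply asserts the result is ``evident'' from finiteness, and your pigeonhole argument on the map $\mu_a(x)=ax$, together with the standard unit/zero-divisor disjointness, is exactly the detail being waved at. You are also right that the halidon hypothesis is superfluous --- the paper's proof cites invertibility of $m$, but finiteness of the commutative ring with unity alone suffices, as your write-up makes clear.
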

\begin{proof}
Since $R$ is finite and $m$ is invertible in $R$, the proof is evident.
\end{proof}
The next theorem will give an improved necessary and sufficient conditions for a ring to be a halidon ring.
\begin{theorem}  \label{rd2}
A finite commutative ring $R$ with unity is a halidon ring with index $m$ if and only if there is a primitive $m^{th}$ root of unity  $\omega$ such that $m$, $\omega^{d}-1 \in U(R)$; the unit group of $R$ for all divisors $d$ of $m$ and $d<m$.
\end{theorem}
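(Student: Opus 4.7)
The forward direction is immediate: Lemma \ref{rd3} already delivers the stronger conclusion that $\omega^{r}-1\in U(R)$ for every $r\in\{1,\dots,m-1\}$, and every divisor $d$ of $m$ with $d<m$ lies in this range, so the stated condition follows at once. All the work is in the converse.

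For the converse, the strategy is to upgrade the divisor-indexed hypothesis ``$\omega^{d}-1\in U(R)$ for all divisors $d\mid m$ with $d<m$'' to the full family ``$\omega^{r}-1\in U(R)$ for all $r=1,\dots,m-1$,'' and then invoke Lemma \ref{rd3} to conclude that $R$ is halidon with index $m$. The bridge between an arbitrary $r$ and a divisor is the greatest common divisor: fix $r\in\{1,\dots,m-1\}$ and set $d=\gcd(r,m)$. Then $d\mid m$ and $d\le r<m$, so the hypothesis supplies $\omega^{d}-1\in U(R)$ for free.

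The key technical step is a B\'ezout-plus-geometric-series trick. Write $d=ar+bm$ with $a,b\in\mathbb{Z}$. Because $\omega^{m}=1$, replacing $a$ by $a+km$ and $b$ by $b-kr$ leaves the identity $\omega^{d}=\omega^{ar+bm}=(\omega^{r})^{a}$ unchanged, so for $k$ sufficiently large I may assume $a>0$. Then the standard factorization
$$\omega^{d}-1=(\omega^{r})^{a}-1=(\omega^{r}-1)\bigl(1+\omega^{r}+\omega^{2r}+\cdots+\omega^{(a-1)r}\bigr)$$
exhibits $\omega^{r}-1$ as a factor of the unit $\omega^{d}-1$. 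In a commutative ring any factor of a unit is itself a unit (if $(\omega^{d}-1)u=1$ and the second factor above is $S$, then $\omega^{r}-1$ has inverse $Su$), so $\omega^{r}-1\in U(R)$.

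Since $r\in\{1,\dots,m-1\}$ was arbitrary, Lemma \ref{rd3} now yields that $R$ is a halidon ring with index $m$, completing the converse. The only step requiring genuine care is the positivity adjustment of the B\'ezout coefficient $a$, which turns the bare identity $\omega^{d}=(\omega^{r})^{a}$ into an honest polynomial factorization; everything else is a one-line algebraic manipulation and a direct appeal to Lemma \ref{rd3}.
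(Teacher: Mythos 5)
Your proof is correct, and its skeleton --- reduce an arbitrary exponent $r$ to $\gcd(r,m)$ via B\'ezout coefficients and the geometric-series factorization, then conclude with Lemma \ref{rd3} --- is the same as the paper's, but you execute it more cleanly in two respects. First, the paper, after showing that $\omega^{r}-1$ divides $\omega^{g}-1$ for $g=\gcd(r,m)$, does \emph{not} simply apply the hypothesis to $g$ (even though $g$ is itself a divisor of $m$ with $g\le r<m$); instead it chooses a divisor $d$ of $m$ with $g\mid d$, argues that $\omega^{g}-1$ is not a zero divisor because $(\omega^{g}-1)b=\omega^{d}-1\in U(R)$, and then invokes finiteness of $R$ through Proposition \ref{rd15} to upgrade ``non-zero-divisor'' to ``unit.'' Your shortcut --- $g$ already satisfies the hypothesis directly, and a divisor of a unit in a commutative ring is a unit --- eliminates both the detour and any use of finiteness in the converse (finiteness enters only through the ambient statement, not your argument). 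Second, you patch a gap the paper glosses over: it substitutes $k=u$ into $(p-1)(1+p+\cdots+p^{k-1})=p^{k}-1$ where $u$ is a B\'ezout coefficient that may be negative, in which case the factorization as written is meaningless; your normalization replacing $a$ by $a+km$, justified by $\omega^{m}=1$ so that $(\omega^{r})^{a}$ is unchanged, makes the exponent honestly positive before factoring. In short: same strategy, but your version is both shorter and more rigorous than the printed proof.
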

\begin{proof}
If R is a finite commutative halidon ring with index $m$, then it is evident that there is a primitive $m^{th}$ root of unity  $\omega$ such that $m$, $\omega^{d}-1 \in U(R)$; the unit group of $R$ for all divisor $d$ of $m$. \\
Conversely, assume that there is a primitive $m^{th}$ root of unity  $\omega$ such that $m$, $\omega^{d}-1 \in U(R)$; the unit group of $R$ for all divisor $d$ of $m$. We would like to show that $\omega^{r}-1 \in U(R)$, for $r=1,2,3,...,m-1$. Let $g$ be the greatest common divisor of $r$ and $m$. Then there are integers $u$ and $v$ such that $ur+vm=g$. Using the well known geometric series, we have $$(p-1)(1+p+p^{2}+...+p^{k-1})=p^{k}-1. $$ Put $p=\omega^{r}$ and $k=u$. Then we get $ (\omega^{r}-1).a=\omega^{ru}-1$ for some $a \in R$. This means that $(\omega^{r}-1)$ divides $\omega^{ru}-1=\omega^{ru}.\omega^{vm}-1=\omega^{ru+vm}-1=\omega^{g}-1$. Now we can choose a divisor $d$ of $m$ such that $g$ divides $d$. Let $p=\omega^{g}$ and $k=\dfrac{d}{g}$. Thus we have
$$ (\omega^{g}-1).b=\omega^{d}-1$$ for some $b \in R$. If $(\omega^{g}-1).c=0$, multiplying by $b$, we get $(\omega^{d}-1).c=0$. Since $(\omega^{d}-1) \in U(R)$, $c =0$. Therefore $(\omega^{g}-1)$ is not a zero divisor. Since $R$  is finite and using proposition \ref{rd15}, we get $(\omega^{g}-1)\in U(R)$. \\
Also, $(\omega^{r}-1)$ divides $\omega^{g}-1$. Therefore $(\omega^{r}-1)$ is a unit in $R$. Thus we have $(\omega^{r}-1)$ is a unit in $R$ for $r=1,2,,3..., m-1.$ By lemma \ref{rd3}, $R$ is a halidon ring with index $m$.
\end{proof}
The following results are stated without proof. For proofs, refer to \cite{at} and \cite{ath}.
\begin{proposition} Let R be a finite commutative halidon ring with index m. Then m divides the number of nonzero zero divisors in R.  \end{proposition}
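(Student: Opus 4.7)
The plan is to let the cyclic group $\langle \omega \rangle$ of order $m$ act on $R$ by multiplication and count orbits. First I would observe that for any nonzero $x \in R$, the stabilizer of $x$ under this action is trivial: indeed, $\omega^i \cdot x = x$ means $(\omega^i - 1)x = 0$, and by Lemma~\ref{rd3} each $\omega^i - 1$ with $1 \le i \le m-1$ is a unit in $R$, which forces $x = 0$. So every nonzero element lies in an orbit of size exactly $m$.

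Next I would check that multiplication by $\omega$ preserves the set of nonzero zero divisors. Since $\omega$ is a unit, multiplication by $\omega$ is a bijection of $R$ taking $0$ to $0$, units to units, and therefore (by Proposition~\ref{rd15}) the complement $ZD(R)$ to itself; clearly it sends nonzero elements to nonzero elements. Hence the set $ZD(R) \setminus \{0\}$ of nonzero zero divisors is a union of $\langle \omega \rangle$-orbits.

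Combining these two observations, the nonzero zero divisors partition into orbits each of cardinality $m$, so $m$ divides their total number. As a bonus one also gets that $m$ divides $|U(R)|$ by the same argument applied to the unit group, which is consistent with the standing picture of halidon rings.

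I do not anticipate a genuine obstacle here: the whole argument rests on the fact that $\omega^i - 1 \in U(R)$ for $1 \le i \le m-1$, which is precisely the content of Lemma~\ref{rd3}. The only subtle point is the implicit use of Proposition~\ref{rd15} to ensure that multiplication by the unit $\omega$ cannot move a zero divisor out of $ZD(R)$, but this is immediate since $\omega \cdot ZD(R) \subseteq ZD(R)$ follows directly from the definition of a zero divisor and the fact that $\omega$ is itself a unit.
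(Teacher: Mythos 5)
Your proof is correct, but note that the paper itself offers no proof of this proposition to compare against: it is one of the results ``stated without proof'' with a pointer to \cite{at} and \cite{ath}. Judged on its own terms, your orbit argument is sound and stays entirely within the paper's toolkit. The key step --- that the stabilizer of any nonzero $x$ under the action of $\langle \omega \rangle$ is trivial because $(\omega^i-1)x=0$ with $\omega^i-1\in U(R)$ forces $x=0$ --- is exactly the forward direction of Lemma~\ref{rd3}, and your invariance step needs even less than you give it: from $xy=0$ with $x,y\neq 0$ one gets $(\omega x)y=0$ and $\omega x\neq 0$ directly, so the appeal to Proposition~\ref{rd15} is dispensable (it is needed only in the background, to know nonzerodivisors in the finite ring $R$ are units, which is already baked into Lemma~\ref{rd3}). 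Two small points worth making explicit: the orbit-stabilizer count uses that $\langle\omega\rangle$ has order exactly $m$, which holds because $m$ is by definition the least positive integer with $\omega^m=1$; and your observation that \emph{every} nonzero element lies in an orbit of size $m$ actually proves more than asked --- it partitions $R\setminus\{0\}$ into $m$-element orbits and hence gives, in one stroke, $m \mid |U(R)|$, the present proposition, and the paper's later cardinality result $|R| \equiv 1 \pmod{m}$.
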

\begin{proposition}
Let R be a commutative halidon ring with index m and let k $>$ 1 be a divisor of m. Then R is also a halidon ring with index k.
\end{proposition}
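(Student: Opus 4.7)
The plan is to produce an explicit primitive $k$-th root of unity inside $R$ and then verify the two halidon axioms (the orthogonality/character sum and invertibility of the index). The natural candidate is $\eta=\omega^{n}$, where $n=m/k$; this is essentially forced because any root of unity we build must be a power of $\omega$ if we want to reuse what we already know about $R$.

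First I would check that $\eta^{k}=\omega^{nk}=\omega^{m}=1$, and that $k$ is minimal with this property: if $\eta^{s}=1$ for some $0<s<k$ then $\omega^{ns}=1$ with $0<ns<nk=m$, contradicting the fact that $\omega$ has order exactly $m$. Next I would verify invertibility of $k$ in $R$: since $m=kn$ and $m$ is a unit, $k\cdot(n\cdot m^{-1})=1$, so $k\in U(R)$.

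The substantive step is the character-sum identity
\[
1+\eta^{r}+\eta^{2r}+\cdots+\eta^{(k-1)r}=\begin{cases}k & r=0,\\ 0 & 0<r\le k-1.\end{cases}
\]
The $r=0$ case is immediate. For $0<r\le k-1$, I would use the geometric-series trick: multiplying the sum by $(\eta^{r}-1)=(\omega^{nr}-1)$ gives $\eta^{rk}-1=\omega^{mr}-1=0$. So if I can show $\omega^{nr}-1$ is a unit of $R$, the sum must be $0$. This is where the main obstacle sits, and it is handled by Lemma \ref{rd3}: since $R$ is halidon of index $m$, the element $\omega^{t}-1$ is a unit for every $t$ with $1\le t\le m-1$. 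For $1\le r\le k-1$ we have $n\le nr\le n(k-1)=m-n<m$, so $nr$ falls in that allowed range and $\omega^{nr}-1\in U(R)$, as required.

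Combining these three verifications shows that $\eta=\omega^{m/k}$ is a primitive $k$-th root of unity in $R$ and that $k\in U(R)$, so $R$ is a halidon ring with index $k$. The only delicate point is keeping the inequality $nr<m$ strict so that Lemma \ref{rd3} applies; everything else is bookkeeping once $\eta$ is chosen.
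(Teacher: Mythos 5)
The paper gives no proof of this proposition to compare against --- it is one of the results ``stated without proof'' with a pointer to \cite{at} and \cite{ath} --- so your argument has to stand on its own, and it very nearly does: the candidate $\eta=\omega^{m/k}$, the minimality of $k$ via the minimality of $m$, and the invertibility $k\cdot(nm^{-1})=1$ are all correct. The one genuine flaw is the step you yourself flag as the ``main obstacle'': you invoke Lemma \ref{rd3} to get $\omega^{nr}-1\in U(R)$, but Lemma \ref{rd3} is stated only for \emph{finite} commutative rings, whereas this proposition carries no finiteness hypothesis (the paper's halidon rings include infinite ones such as $\mathbb{C}$ or $\mathbb{Z}[\zeta_{m},\tfrac1m]$). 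Worse, the direction of Lemma \ref{rd3} you need (halidon $\Rightarrow$ each $\omega^{r}-1$ a unit) is exactly the direction whose proof requires finiteness, to upgrade ``non-zero-divisor'' to ``unit''; the paper's one-line identity $(\omega^{r}-1)\sum_{j=0}^{m-1}\omega^{rj}=\omega^{rm}-1=0$ only proves the converse. So as written, your character-sum verification is not licensed for a general commutative halidon ring.

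The gap is real but easily repaired, in either of two ways, both within the paper's toolkit. First, you never need $\eta^{r}-1$ to be a unit, only a non-zero-divisor, and that holds in full generality: if $(\omega^{t}-1)c=0$ with $1\le t\le m-1$, then $\omega^{t}c=c$, hence $\omega^{tj}c=c$ for all $j$, so $0=\left(\sum_{j=0}^{m-1}\omega^{tj}\right)c=mc$ and $c=0$ since $m\in U(R)$; then your relation $(\eta^{r}-1)\bigl(1+\eta^{r}+\cdots+\eta^{(k-1)r}\bigr)=\eta^{rk}-1=0$ forces the sum to vanish. Second, and more directly, you can bypass the geometric-series trick altogether: since $\eta^{r}=\omega^{nr}$ with $0<nr<m$, the halidon identity for $\omega$ gives $0=\sum_{j=0}^{m-1}\omega^{nrj}$, and because $\eta^{rk}=1$ each residue class of $j$ modulo $k$ contributes identically, so the sum collapses to $n\sum_{s=0}^{k-1}\eta^{rs}=0$; as $n=m/k$ is a unit (by the same computation $n\cdot(km^{-1})=1$ you used for $k$), the character sum is zero. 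With either repair your proof is complete for arbitrary commutative halidon rings; restricted to finite $R$, it is correct exactly as you wrote it.
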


\begin{proposition}  \label{rd12}
Let I be an ideal of a halidon ring with index m. Then R/I is also a halidon with same index m.
\end{proposition}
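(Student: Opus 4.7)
The plan is to verify, directly from the definitions, that the image $\bar\omega$ of $\omega$ under the canonical projection $\pi:R\to R/I$ is a primitive $m$th root of unity in $R/I$ and that $m$ remains invertible there. The key observation is that every defining identity involves only ring operations and therefore descends under any ring homomorphism; the only subtlety is the minimality clause ``$m$ is the least positive integer with $\omega^m=1$'', which could in principle be spoiled by quotienting, and it is precisely there that a small argument is needed.

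Assume $I\subsetneq R$, so that $R/I$ is nontrivial. Since $m\in U(R)$ with some inverse $t$, applying $\pi$ gives $\bar m\,\bar t=\bar 1$, so $\bar m\in U(R/I)$. The relation $\omega^m=1$ descends to $\bar\omega^m=\bar 1$, and for each exponent $s$ with $0\le s\le m-1$ the identity
\[
\sum_{r=0}^{m-1}\omega^{rs}=\begin{cases} m & \text{if } s\equiv 0\pmod m,\\ 0 & \text{otherwise},\end{cases}
\]
passes verbatim through $\pi$ to give the corresponding identity in $R/I$.

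The main step is to show that $m$ is genuinely the least positive integer with $\bar\omega^m=\bar 1$. Suppose to the contrary that $\bar\omega^k=\bar 1$ for some $k$ with $0<k<m$. Then every power $\bar\omega^{rk}$ equals $\bar 1$, so the downstairs sum identity with exponent $k\not\equiv 0\pmod m$ yields
\[
\bar 0=\sum_{r=0}^{m-1}\bar\omega^{rk}=\sum_{r=0}^{m-1}\bar 1=\bar m,
\]
contradicting $\bar m\in U(R/I)$ (such an equation would force $\bar 1=\bar 0$, i.e.\ $I=R$). Hence the order of $\bar\omega$ in $R/I$ is exactly $m$, completing the verification.

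The hard part, if any, is exactly this minimality check; everything else is the routine observation that ring homomorphisms preserve polynomial identities. Notably the argument needs neither finiteness nor commutativity of $R$: invertibility of $m$ alone suffices to prevent the order of $\bar\omega$ from collapsing in the quotient, which is a conceptually cleaner route than trying to invoke Lemma \ref{rd3} or Theorem \ref{rd2} in the quotient (where one would first have to re-establish that $R/I$ is finite commutative).
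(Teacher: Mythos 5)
Your proof is correct. Note that the paper itself states Proposition \ref{rd12} without proof, deferring to \cite{at} and \cite{ath}, so there is no in-paper argument to compare against; your verification stands on its own and is sound. The routine part, as you say, is that the defining data descend along the projection $\pi:R\to R/I$: the relation $\omega^m=1$, the invertibility of $m$, and the identities $\sum_{r=0}^{m-1}\omega^{rs}=m$ (for $s\equiv 0 \bmod m$) or $0$ (otherwise) are all preserved by any ring homomorphism. The genuine content is your minimality check, and it is handled correctly: if $\bar\omega^{k}=\bar 1$ for some $0<k<m$, then the descended identity with exponent $k$ forces $\bar m=\sum_{r=0}^{m-1}\bar\omega^{rk}=\bar 0$, which is incompatible with $\bar m\in U(R/I)$ unless $R/I$ is the zero ring. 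You are also right to make the properness hypothesis $I\subsetneq R$ explicit: as literally stated the proposition fails for $I=R$ when $m>1$, since the zero ring is at most a trivial halidon ring of index $1$, so this is a needed (if tacit) assumption rather than a pedantic one. Your closing remark is apt as well: the criteria of Lemma \ref{rd3} and Theorem \ref{rd2} are stated only for finite commutative rings, whereas your definition-level argument needs neither finiteness nor commutativity, and so covers the full generality in which the proposition is asserted.
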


The cardinality of a finite non-trivial commutative halidon ring is given by the following proposition.
\begin{proposition}
Let R be a finite commutative  halidon ring with index m. Then $|R|$=1 (mod m).
\end{proposition}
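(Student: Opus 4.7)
The plan is to decompose $R$ additively using Proposition \ref{rd15} and then show that each piece is controlled modulo $m$.

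First I would write
\[
|R| = |U(R)| + |ZD(R)|,
\]
which is justified by Proposition \ref{rd15}, since $U(R)$ and $ZD(R)$ are disjoint and together exhaust $R$. (If $R$ is trivial, i.e.\ $|R|=1$, the conclusion is immediate, so we may assume $1 \ne 0$, in which case $0 \in ZD(R)$.)

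Next I would handle the unit side. The element $\omega$ is a unit (its inverse is $\omega^{m-1}$), and by definition of a primitive $m^{\text{th}}$ root of unity the cyclic subgroup $\langle \omega \rangle \le U(R)$ has order exactly $m$. Lagrange's theorem then gives $m \mid |U(R)|$, so $|U(R)| \equiv 0 \pmod m$.

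For the zero-divisor side I would invoke the proposition stated just above, which asserts that $m$ divides the number of \emph{nonzero} zero divisors. Combined with the fact that $0 \in ZD(R)$, this yields $|ZD(R)| \equiv 1 \pmod m$. Adding the two congruences gives $|R| \equiv 0 + 1 \equiv 1 \pmod m$, as required.

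There is no real obstacle here: the argument is essentially bookkeeping, with the two nontrivial inputs (the partition $R = U(R) \sqcup ZD(R)$ and the divisibility of the count of nonzero zero divisors by $m$) already available from the preceding results. The only subtlety worth flagging is the convention that $0$ is counted as a zero divisor, which is forced by Proposition \ref{rd15} since $0 \notin U(R)$; this is what makes $|ZD(R)|$ leave remainder $1$ rather than $0$ modulo $m$.
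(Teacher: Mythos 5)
Your argument is correct, but there is no in-paper proof to compare it against: this proposition sits in the block of results the paper explicitly states without proof, deferring to \cite{at} and \cite{ath}, and the paper only illustrates it with the example $R=\mathbb{Z}_{p^{k}}$. Taken on its own terms, your bookkeeping is sound: Proposition \ref{rd15} gives the partition $R = U(R)\sqcup ZD(R)$ (with $0\in ZD(R)$, as you rightly flag, since $0\notin U(R)$ in a nontrivial ring); the minimality clause in the definition of a primitive $m^{th}$ root means $\omega$ has multiplicative order exactly $m$ in $U(R)$, so Lagrange gives $m \mid |U(R)|$; and the immediately preceding proposition gives $m \mid |ZD(R)|-1$. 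Adding the congruences does yield $|R|\equiv 1 \pmod m$. One caveat: that preceding proposition is itself unproven in the paper, so your proof inherits its status --- you have reduced the cardinality statement to a sibling citation rather than derived it from first principles. A self-contained alternative, which is likely the argument behind both cited facts, is to let the cyclic group $\langle \omega \rangle$ act on $R$ by multiplication: if $\omega^{r}x = x$ with $0<r<m$, then $(\omega^{r}-1)x=0$, and $\omega^{r}-1\in U(R)$ by Lemma \ref{rd3}, forcing $x=0$; hence the action on $R\setminus\{0\}$ is free, every orbit there has size exactly $m$, and $|R|-1\equiv 0 \pmod m$ follows at once. Restricting that same orbit count to $U(R)$ and to the nonzero zero divisors (both of which are stable under multiplication by $\omega$) proves both of your divisibility inputs simultaneously, so the free-action route buys independence from the unproven proposition at no extra cost.
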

\begin{example}
Let $R=Z_{p^{k}}$ where p is an odd prime. Then $R$ is a halidon ring with index $m=p-1$ and $p^{k}=(1+p+p^{2}+p^{3}+...+p^{k-1})(p-1))+1=1 \ mod \ m$. $\therefore \ |R|=1 \ mod \ m$.
\end{example}

\begin{proposition}
Let R be a finite commutative halidon ring with index $m>1$ such that U(R)=$<\omega>$, where $<\omega>$ is the cyclic group generated by $\omega$; a primitive $m^{th}$ root of unity. Then \begin{enumerate}
                               \item $R=T\bigoplus L$, where T is a subfield of R and L is a subspace of R as a vector space over T,
                               \item R is semisimple.
                             \end{enumerate}
 \end{proposition}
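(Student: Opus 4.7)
The plan is to prove statement (2) first, by showing that $R$ has no nonzero nilpotents, and then to use this together with the primitivity of $\omega$ to force $R$ itself to be a field, which makes (1) immediate.

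First I would handle (2). Suppose $x\in R$ is nilpotent with $x^n=0$. The identity $(1-x)(1+x+\cdots+x^{n-1})=1$ shows that $1-x$ is a unit, so $1-x=\omega^j$ for some $0\leq j\leq m-1$. The case $j=0$ forces $x=0$. If $1\leq j\leq m-1$, then $-x=\omega^j-1\in U(R)$ by Lemma~\ref{rd3}, so $x$ itself is a unit, contradicting $x$ being a nonzero nilpotent. Hence $R$ is reduced. Since $R$ is a finite (thus Artinian) reduced commutative ring with $1$, the usual structure theorem yields $R\cong F_1\times\cdots\times F_k$ with each $F_i$ a finite field, which is semisimple, settling (2).

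Next I would eliminate the case $k\geq 2$ to obtain (1). Write $\omega=(\omega_1,\ldots,\omega_k)$ and let $d_i$ denote the multiplicative order of $\omega_i$ in $F_i$, so that $d_i\mid m$. Projecting the primitive-root sum $\sum_{r=0}^{m-1}\omega^{rs}=0$ (valid for $1\leq s\leq m-1$) onto the $i$-th factor, the geometric-series identity in a field yields $m\cdot 1_{F_i}$ when $d_i\mid s$ and $0$ otherwise. Since $m$ is invertible in each $F_i$, the choice $s=d_i$ produces a nonzero $i$-th component unless $d_i=m$; hence $d_i=m$ and $m\mid |F_i|-1$ for every $i$. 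On the other hand, cyclicity of $U(R)=\prod_i F_i^{\times}$ forces the orders $|F_i|-1$ to be pairwise coprime, so $k\geq 2$ would yield $m\mid\gcd(|F_1|-1,|F_2|-1)=1$, contradicting $m>1$. Hence $k=1$ and $R$ is a field.

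With $R$ a field, statement (1) is immediate: take $T=R$ and $L=\{0\}$, or more generally any subfield $T\subseteq R$ (for instance the prime subfield) together with a vector-space complement $L$. The main obstacle I anticipate is the middle paragraph, where the paper's primitivity condition must be combined with the cyclic-unit-group hypothesis; the key idea is to test the sum at the exponent $s=d_i$ rather than at $s=1$, which is precisely what pins down $d_i=m$ in every coordinate and collapses the product to a single factor.
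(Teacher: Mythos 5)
Your argument is correct, but note that the paper itself prints no proof of this proposition: it falls under the results ``stated without proof,'' with proofs deferred to \cite{at} and \cite{ath}, so there is no in-paper argument to match step by step. Your chain of reasoning checks out: $1-x$ is a unit for nilpotent $x$, the hypothesis $U(R)=\langle\omega\rangle$ together with Lemma \ref{rd3} (whose forward direction does apply to the given $\omega$, since it is a primitive $m^{th}$ root in a halidon ring of index $m$) forces $x=0$; a finite reduced commutative ring is a product of finite fields; projecting the primitivity relation $\sum_{r=0}^{m-1}\omega^{rs}=0$ at the exponent $s=d_i$ pins down $d_i=m$ in every coordinate; and cyclicity of $\prod_i F_i^{\times}$ forces pairwise coprime factor orders, killing $k\geq 2$. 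The main difference is architectural: the paper treats this proposition as a stepping stone toward Theorem \ref{rd5} (the equivalence of the hypotheses with $R$ being a finite field), with $T$ presumably constructed explicitly as $\{0\}\cup\langle\omega\rangle$ and $L$ a complement, whereas you prove the conclusion of Theorem \ref{rd5} outright and then read off both items trivially ($T=R$, $L=\{0\}$). Your route is self-contained and strictly stronger, at the cost of invoking the structure theorem for Artinian reduced rings; the intended decomposition approach is more hands-on and isolates the subfield before the field property is known. One simplification worth noting: once $d_i=m$ for every $i$, you can finish by counting alone, since $m=|\langle\omega\rangle|=|U(R)|=\prod_{i=1}^{k}\left(|F_i|-1\right)$ while $m$ divides each factor $|F_i|-1$, which forces $k=1$ without any appeal to coprimality.
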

The next theorem will give the necessary and sufficient conditions for a non-trivial halidon ring to become a filed.
 \begin{theorem} \label{rd5}
 A ring $R$ is a finite field if and only if $R$ is a finite commutative halidon ring with index $m>1$ such that $U(R)=<\omega>$;where $<\omega>$ is the cyclic group generated by $\omega$; a primitive $m^{th}$ root of unity .
 \end{theorem}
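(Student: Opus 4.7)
The plan is to prove the two implications separately, with the backward direction relying on the structural information from the preceding proposition.

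For the forward direction, suppose $R$ is a finite field with $|R|=q\geq 3$, so that $m=q-1>1$. Since $R^{*}$ is cyclic of order $m$, pick a generator $\omega$; then $U(R)=\langle\omega\rangle$ by construction. I would verify the primitive $m$th-root condition via the usual finite geometric identity: for $0<s<m$ one has $\omega^{s}-1\neq 0$ in the field, so $(\omega^{s}-1)\sum_{j=0}^{m-1}\omega^{js}=\omega^{ms}-1=0$ forces the sum to vanish, while $s\equiv 0\pmod m$ gives the sum $m\cdot 1$. Invertibility of $m$ in $R$ follows since $\gcd(m,\mathrm{char}\,R)=1$, after which Lemma~\ref{rd3} delivers the halidon condition.

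For the converse, assume $R$ is a finite commutative halidon ring with index $m>1$ and $U(R)=\langle\omega\rangle$. By the preceding proposition $R$ is semisimple, and so, being finite and commutative, it decomposes by Wedderburn as $R\cong F_{1}\times\cdots\times F_{k}$ into finite fields. The goal is to force $k=1$, which immediately gives that $R$ is a field.

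Write $\omega=(\omega_{1},\ldots,\omega_{k})$. Projecting the primitive-root relation onto the $i$th factor yields $\sum_{j=0}^{m-1}\omega_{i}^{\,js}=0$ in $F_{i}$ for every $0<s<m$. If the order of $\omega_{i}$ in $F_{i}^{*}$ were some $d_{i}<m$, then taking $s=d_{i}$ would collapse the sum to $m\cdot 1_{F_{i}}$; but $m$ is invertible in $R$ and hence in each $F_{i}$, a contradiction. So every $\omega_{i}$ has order exactly $m$, whence $m\mid |F_{i}|-1$. Combined with $|U(R)|=\prod_{i=1}^{k}(|F_{i}|-1)=m$ and the lower bound $|F_{i}|-1\geq m$, this leaves only $k=1$ and $|F_{1}|=m+1$, so $R=F_{1}$ is a field.

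The main obstacle I anticipate is the component-wise analysis just outlined: one must rule out that the primitive $m$th-root condition be vacuously satisfied in some factor through a $\omega_{i}$ of smaller order, and it is precisely the halidon hypothesis that $m$ be invertible that prevents this degeneracy and pins down the order of each $\omega_{i}$ exactly.
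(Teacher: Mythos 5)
Your proposal is correct, but note at the outset that the paper never actually proves Theorem \ref{rd5}: it sits in the block of results ``stated without proof,'' with the reader referred to \cite{at} and \cite{ath}, so the only comparison available is with the paper's visible scaffolding. That scaffolding is the proposition immediately preceding the theorem, which under the identical hypotheses produces $R=T\oplus L$ with $T$ a subfield and $L$ a $T$-subspace, and concludes that $R$ is semisimple; the intended argument evidently pushes that explicit decomposition through to $L=0$. You take a genuinely different route for the converse: you keep only the semisimplicity conclusion, invoke the Artin--Wedderburn description of a finite commutative semisimple ring to write $R\cong F_1\times\cdots\times F_k$ with each $F_i$ a finite field, and then argue componentwise. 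The componentwise step is the real content and it is sound: projecting the defining relation $\sum_{j=0}^{m-1}\omega^{js}=0$ (for $0<s<m$) into $F_i$ and taking $s=d_i$, the order of $\omega_i$ (a divisor of $m$), would collapse the sum to $m\cdot 1_{F_i}=0$ if $d_i<m$, contradicting the invertibility of $m$; hence each $\omega_i$ has order exactly $m$, so $m\mid |F_i|-1$, and the count $m=|\langle\omega\rangle|=|U(R)|=\prod_{i=1}^{k}\left(|F_i|-1\right)\geq m^{k}$ forces $k=1$ (here the hypothesis $m>1$ is essential, and you use it correctly). What your route buys is independence from the unproved subfield decomposition $R=T\oplus L$; what it costs is reliance on Artin--Wedderburn, and you still inherit the semisimplicity claim, which the paper also leaves to the references, so your argument is self-contained only modulo that proposition. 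Two small repairs are worth making explicit. First, as literally stated the theorem fails for $R=\mathbb{F}_2$: it is a finite field, yet $U(R)$ is trivial, so no index $m>1$ exists; your standing assumption $|R|=q\geq 3$ silently excludes this and should be stated as a hypothesis rather than assumed. Second, in the forward direction citing Lemma \ref{rd3} is redundant: you verify the summation identity in the definition of a primitive $m$th root directly, and $\gcd(q-1,\operatorname{char}R)=1$ already gives invertibility of $m$ in the field, which is all the halidon condition requires.
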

 \begin{example} Let $R=Z_{25}$. Then $U(R)=<2>$ and $2^{20}=1$. $R$ is not a halidon ring as the index $20$ is not invertible in $R$. However, $R=Z_{5}$ is a halidon ring with index $4$, $\omega=2$ and $U(R)=<2>$ which is also a field.
 \end{example}

 Let $$u= \sum_{i=1}^{m}\alpha_{i}g_{i}$$ be an element in the group algebra $RG$ and let $$\lambda_{r}=\sum_{i=1}^{m}\alpha_{m-i+2}(\omega^{(i-1)})^{(r-1)}$$ where $\omega \in R$ is a primitive $m^{th}$ root of unity. Then $u$ is said to be \textit{depending } on $\lambda_{1},\lambda_{2},......,\lambda_{m}$.

 \begin{proposition}
 Let S be a subring with unity of a halidon ring R with index m and let $G =<g>=\{g_{1}=1,g_{2}=g,......,g_{m}=g^{m-1}\}$ be a cyclic group of order $m$ generated  by $g$. Let $$u= \sum_{i=1}^{m}\alpha_{i}g_{i} \in U(RG)\cap SG$$ be depending on $\lambda_{1},\lambda_{2},......,\lambda_{m}$.Then $u^{-1} \in SG $ if and only of $\lambda=\lambda_{1}\lambda_{2}......\lambda_{m}$ is invertible in S.
 \end{proposition}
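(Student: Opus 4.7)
The plan is to identify the product $\lambda = \lambda_1\cdots\lambda_m$ with the determinant of left multiplication by $u$, viewed as an $S$-linear endomorphism of the free $S$-module $SG$, and then apply the classical adjugate characterisation of invertibility.

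To set up the diagonalisation, I would first make the substitution $k = m-i+2 \pmod m$, which recasts the author's definition as
\[
\lambda_r = \sum_{k=1}^m \alpha_k\,\omega^{-(k-1)(r-1)}.
\]
This is precisely the image of $u$ under the $R$-algebra homomorphism $\psi_r : RG \to R$ sending $g \mapsto \omega^{-(r-1)}$, so the joint map $\phi : RG \to R^m$, $u \mapsto (\lambda_1,\dots,\lambda_m)$, is an $R$-algebra homomorphism. Its matrix in the standard bases is the Vandermonde matrix $V_{r,k} = \omega^{-(k-1)(r-1)}$ in the distinct elements $1,\omega^{-1},\dots,\omega^{-(m-1)}$; each factor of its determinant is, up to a unit in $R$, of the form $\omega^{-d}-1$ for some $1\le d \le m-1$, hence a unit by Lemma \ref{rd3}. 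Therefore $V$ is invertible and $\phi$ is an $R$-algebra isomorphism.

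Next I would identify $\det(L_u) = \lambda$. In the basis $\{1,g,\dots,g^{m-1}\}$, $L_u$ has as its matrix a circulant $A$ whose entries are drawn from $\{\alpha_1,\dots,\alpha_m\}$; since $u \in SG$, every entry of $A$ lies in $S$, and $L_u$ restricts to an $S$-linear endomorphism of $SG$ with the same matrix $A$. Because $\phi$ diagonalises $L_u$, we have $V A V^{-1} = \mathrm{diag}(\lambda_1,\dots,\lambda_m)$, whence $\det(A) = \lambda$; being a polynomial in the entries of $A$ with integer coefficients, this determinant already lies in $S$.

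Finally I would invoke the universal adjugate identity $A\cdot\mathrm{adj}(A) = \det(A)\cdot I$, valid over any commutative ring. If $\lambda \in U(S)$, then $\lambda^{-1}\mathrm{adj}(A)$ is a two-sided $S$-inverse of $A$, so $L_u$ is invertible on $SG$, and $L_u^{-1}(1) \in SG$ is the required inverse of $u$. Conversely, if $u^{-1} \in SG$, then $L_{u^{-1}}$ has matrix $A' \in M_m(S)$ with $AA' = I$, forcing $\det(A)\det(A') = 1$ in $S$ and therefore $\lambda \in U(S)$. The main source of technical care is the opening step: one has to keep the author's somewhat idiosyncratic indexing $\alpha_{m-i+2}$ straight to confirm that $\phi$ really is the ring homomorphism $g \mapsto \omega^{-(r-1)}$ on each factor; once that identification is pinned down, the rest is a routine application of the adjugate formula.
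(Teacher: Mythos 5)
Your argument is correct, but there is an important point of context: the paper itself contains no proof of this proposition --- it sits in the block of results ``stated without proof,'' with the reader referred to \cite{at} and \cite{ath}. So the only comparison available is with the machinery the paper does set up, and there your route aligns closely while being more self-contained. Your opening reindexing is right: with $k=m-i+2 \pmod m$ one gets $\lambda_r=\sum_{k=1}^{m}\alpha_k\,\omega^{-(k-1)(r-1)}$, so your $\phi$ is exactly the isomorphism $\rho(u)=(\lambda_1,\dots,\lambda_m)$ that the paper invokes to prove Theorem~\ref{rd10}. The real content of your proof --- recognising $\lambda$ as $\det A$ for the circulant matrix $A\in M_m(S)$ of $L_u$, then running the adjugate identity in $M_m(S)$ --- is what makes the $S$-rationality visible: the paper's explicit inversion formula $\beta_i=\frac{1}{m}\sum_r \lambda_r^{-1}(\omega^{i-1})^{r-1}$ involves $\omega$ and $\frac1m$, which need not lie in $S$, whereas by Cramer each $\beta_i$ equals $\lambda^{-1}$ times an entry of $\mathrm{adj}(A)\in M_m(S)$, and your converse via $\det A\cdot\det A'=1$ is clean. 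As far as this paper shows, your proof is a legitimate independent argument rather than a reproduction of the (absent) original.

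One repair is needed. You justify invertibility of $V$ by factoring its Vandermonde determinant and citing Lemma~\ref{rd3} for $\omega^{d}-1\in U(R)$; but that lemma is stated only for \emph{finite} commutative rings, while the $R$ of this proposition is an arbitrary halidon ring (and over a general commutative ring, distinctness of the nodes alone does not make a Vandermonde matrix invertible --- you correctly reduce to the factors being units, but the cited lemma does not cover the general case). The fact you need is nevertheless true in every halidon ring: from $\sum_{k=0}^{m-1}\omega^{rk}=0$ for $1\le r\le m-1$ one computes $(\omega^{r}-1)\sum_{k=0}^{m-1}k\,\omega^{rk}=m$, so $\omega^{r}-1$ is a unit because $m$ is. Even simpler, and closer to what the paper itself does in its bilinear-forms section: the defining orthogonality relation of a primitive $m^{th}$ root says precisely that $\Phi\Phi^{*}=mI$, so $\frac{1}{m}\Phi$ inverts your $V=\Phi^{*}$ directly, with no determinant factorisation at all. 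With that substitution your proof is complete.
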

 The next theorem is about the existence of units in integral group rings which is an important area of study in group rings or group algebras.
 \begin{theorem}
 Let $\omega \in \mathbb{C} $ be a complex primitive $m^{th}$ root of unity and let $G =<g>=\{g_{1}=1,g_{2}=g,......,g_{m}=g^{m-1}\}$ be a cyclic group of order $m$ generated  by $g$. Let $$u= \sum_{i=1}^{m}\alpha_{i}g_{i} \in \mathbb{C}G$$ be depending on $\lambda_{1},\lambda_{2},......,\lambda_{m}$.
 Let $$\lambda_{i} ^{*}=\prod_{r=1,r \neq i}^{m} \lambda_{r} $$ and $$ \lambda=\prod_{r=1}^{m} \lambda_{r} $$ Then $$u= \sum_{i=1}^{m}\alpha_{i}g_{i} \in U(\mathbb{Z}G)$$ if and only if $ \lambda = \pm 1$. If $ \lambda_{m}^{-1}=f(\omega)$; a polynomial function of $ \omega$, then \\ $u^{-1}=f(g)$.
 \end{theorem}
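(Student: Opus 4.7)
The plan is to reduce both claims to the preceding proposition combined with the standard discrete Fourier transform isomorphism for $\mathbb{C}G$.

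For the equivalence $u\in U(\mathbb{Z}G)\iff\lambda=\pm1$, I would apply the preceding proposition with $R=\mathbb{C}$ and $S=\mathbb{Z}$. The field $\mathbb{C}$ is a halidon ring of index $m$ (it contains $\omega$, and $m$ is invertible), and $\mathbb{Z}$ is a subring with unity. The map $\Phi:\mathbb{C}G\to\mathbb{C}^m$ sending $\sum\alpha_i g^{i-1}\mapsto(\lambda_1,\lambda_2,\ldots,\lambda_m)$ is a ring isomorphism---the standard DFT for a cyclic group of order $m$, which works precisely because $\omega$ is a primitive $m$-th root of unity---so $u\in U(\mathbb{C}G)$ iff every $\lambda_r\neq0$. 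The preceding proposition then yields $u^{-1}\in\mathbb{Z}G$ iff $\lambda=\prod_r\lambda_r$ is a unit in $\mathbb{Z}$, i.e.\ $\lambda=\pm1$. In the ``if'' direction, $\lambda=\pm1$ already forces every $\lambda_r\neq0$, so the ambient hypothesis $u\in U(\mathbb{C}G)$ comes for free; in the ``only if'' direction, $u\in U(\mathbb{Z}G)\subset U(\mathbb{C}G)$ gives the same hypothesis, and the proposition supplies $\lambda\in U(\mathbb{Z})=\{\pm1\}$.

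For the formula $u^{-1}=f(g)$, the first part produces integers $\beta_1,\ldots,\beta_m$ with $u^{-1}=\sum_j\beta_j g^{j-1}$. Setting $f(x):=\sum_j\beta_j x^{j-1}\in\mathbb{Z}[x]$, a direct unpacking of the definition of $\lambda_r$ (using $\omega^{m}=1$ to simplify $\omega^{(i-1)(m-1)}=\omega^{-(i-1)}$) shows that the $m$-th coordinate of $\Phi(u^{-1})$ is exactly $f(\omega)$, which by multiplicativity of $\Phi$ equals $\lambda_m^{-1}$. So this specific $f$ satisfies $f(\omega)=\lambda_m^{-1}$ and, tautologically, $f(g)=u^{-1}$.

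The main obstacle I foresee is the \emph{uniqueness} of $f$: the identity $f(\omega)=\lambda_m^{-1}$ alone does not pin $f$ down, because any two polynomials differing by a multiple of the cyclotomic polynomial $\Phi_m(x)$ represent the same element of $\mathbb{Z}[\omega]$ but generally distinct elements of $\mathbb{Z}G\cong\mathbb{Z}[x]/(x^m-1)$ (the element $\Phi_m(g)$ is typically nonzero in $\mathbb{Z}G$). To make the second statement of the theorem unambiguous I would fix $f$ to be the polynomial of degree less than $m$ whose coefficients are read off from $u^{-1}$ as above; with that convention, $u^{-1}=f(g)$ becomes an immediate consequence of the DFT picture rather than an independent assertion, and the two halves of the theorem are consistent.
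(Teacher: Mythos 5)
There is no in-paper argument to compare against here: the paper lists this theorem among the results ``stated without proof,'' deferring to \cite{at} and \cite{ath}. Judged on its own terms, your proof is sound, and for the unit criterion it is surely the intended route: $\mathbb{C}$ is a halidon ring of every index, the map $u\mapsto(\lambda_1,\ldots,\lambda_m)$ is the evaluation/DFT isomorphism $\mathbb{C}G\cong\mathbb{C}^m$ (each $\lambda_r$ is $\tilde u(\omega^{-(r-1)})$ for $\tilde u(x)=\sum_i\alpha_i x^{i-1}$, by the index shift $i\mapsto m-i+2$), so $u\in U(\mathbb{C}G)$ iff every $\lambda_r\neq 0$, and the preceding proposition with $R=\mathbb{C}$, $S=\mathbb{Z}$ converts invertibility of $\lambda$ in $\mathbb{Z}$ into $u^{-1}\in\mathbb{Z}G$. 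One hypothesis you use silently: to place $u$ in $U(RG)\cap SG$ you need $u\in\mathbb{Z}G$, i.e.\ integer coefficients $\alpha_i$. The theorem as printed only says $u\in\mathbb{C}G$, and without integrality the ``if'' direction is false (one can prescribe nonintegral $\lambda_r$ with product $1$), so your implicit strengthening is the only sensible reading, but it deserves to be said explicitly.

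Your reservation about the second claim is a genuine defect of the statement rather than of your argument, and the paper itself concedes it: the Remark immediately following the theorem instructs the reader to avoid the relation $\sum_{r=0}^{m-1}\omega^{r}=0$ ``in order to make the behaviour of $\omega$ exactly same as that of $g$,'' which is exactly your point that $f$ is determined by $f(\omega)=\lambda_m^{-1}$ only modulo the kernel of evaluation at $\omega$, whereas $f(g)$ depends on $f$ modulo $x^{m}-1$. A concrete failure of the literal statement: for $m=2$, $u=g$, one has $\lambda_2=\omega=-1$, and the constant polynomial $f\equiv -1$ satisfies $f(\omega)=\lambda_2^{-1}$ yet $f(g)=-1\neq g=u^{-1}$. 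Your repair---fixing $f$ to be the degree-$<m$ polynomial read off from the coefficients of $u^{-1}$, so that $f(\omega)=\lambda_m^{-1}$ follows from multiplicativity of the DFT and $u^{-1}=f(g)$ holds tautologically---is the clean way to make the second sentence true, and it is equivalent in content to the paper's informal convention: what is really needed is $f(x)\tilde u(x)\equiv 1 \pmod{x^{m}-1}$, not merely $f(\omega)=\lambda_m^{-1}$.
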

 \begin{remark}
 When we use the formula $u^{-1}=f(g)$, ensure that the use of  $$ \sum_{r=0}^{m-1} \omega^{r}=0 $$ should be avoided in the calculation of $ \lambda_{i}$ or $ \lambda$ in order to make the behaviour of $ \omega $ exactly same as that of g. For an example see \cite{at}.
 \end{remark}

 \begin{theorem} \label{rd10}
 Let R be a commutative halidon ring with index m and let G be a cyclic group of order m. Then $RG\cong R^{m}$ as R-algebras.
  \end{theorem}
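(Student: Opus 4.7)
The plan is to build an explicit $R$-algebra isomorphism using the discrete Fourier transform associated with $\omega$. Writing $G=\langle g\rangle$ with $g^{m}=1$, define
\[ \phi\colon RG \longrightarrow R^{m}, \qquad \phi\Bigl(\sum_{k=0}^{m-1}\alpha_{k}g^{k}\Bigr) = \Bigl(\sum_{k=0}^{m-1}\alpha_{k}\omega^{rk}\Bigr)_{r=0}^{m-1}. \]
The $r$th coordinate of $\phi$ is the substitution $g\mapsto\omega^{r}$, which is a well-defined ring homomorphism $RG\to R$ since $(\omega^{r})^{m}=1$. Hence $\phi$ is automatically an $R$-algebra homomorphism, and the substantive work lies in exhibiting an inverse.

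I would next produce the candidate inverse $\psi\colon R^{m}\to RG$ given by
\[ \psi(y_{0},\ldots,y_{m-1}) = \sum_{k=0}^{m-1}\Bigl(\tfrac{1}{m}\sum_{r=0}^{m-1}\omega^{-rk}y_{r}\Bigr)g^{k}, \]
which is available because $m\in U(R)$ and because $\omega^{-1}$ is again a primitive $m$th root, as noted in the introduction. After interchanging sums, both compositions $\psi\circ\phi$ and $\phi\circ\psi$ collapse to the orthogonality relation
\[ \sum_{r=0}^{m-1}\omega^{r(i-j)} = m\,\delta_{ij}, \]
which is precisely the defining property of a primitive $m$th root in the halidon setting. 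Once this is in hand, both compositions reduce to the identity and the isomorphism is established.

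The point to watch is that $\phi$ and $\psi$ are represented in the natural bases by the mutually inverse Vandermonde-type matrices $(\omega^{rk})$ and $\tfrac{1}{m}(\omega^{-rk})$; the orthogonality sum is exactly the statement that their product equals the identity matrix. Every ingredient—invertibility of $m$, the orthogonality of the powers of $\omega$, and the availability of $\omega^{-1}$ as a primitive $m$th root—is already encoded in the halidon axioms, so no further hypothesis on $R$ is required. An alternative with the same content would be to identify $RG$ with $R[x]/(x^{m}-1)$, factor this polynomial as $\prod_{r=0}^{m-1}(x-\omega^{r})$, verify pairwise comaximality via $\omega^{i}-\omega^{j}=\omega^{j}(\omega^{i-j}-1)\in U(R)$ by Lemma~\ref{rd3}, and finish by the Chinese Remainder Theorem.
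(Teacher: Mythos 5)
Your proof is correct and is essentially the paper's own approach: the paper states Theorem~\ref{rd10} with proof deferred to its references, but the isomorphism it uses throughout (e.g., $\rho(u)=(\lambda_{1},\dots,\lambda_{m})$ in the proof of Theorem~\ref{rd9} and the identity $\frac{1}{m}\Phi\Phi^{*}=I$ in Section~6) is exactly your DFT/Vandermonde map with inverse $\frac{1}{m}(\omega^{-rk})$, justified by the same orthogonality relation $\sum_{r=0}^{m-1}\omega^{r(i-j)}=m\,\delta_{ij}$. The only differences are cosmetic (the paper's $\lambda_{i}$ uses a reversed coefficient indexing $\alpha_{m-r+2}$, matching circulant-matrix conventions), and your closing CRT remark is a valid alternative reading of the same decomposition.
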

  \begin{theorem}[\textbf{Higman's Theorem} \cite{gh}, \cite{gk}] \label{rd13}
 Let $R$ be a commutative halidon ring with index $m$ and let $G$ be an finite abelian group of order $n$ with exponent $m$ and $n$ is a unit in $R$. Then $RG\cong R^{n}$ as $R$-algebras.
  \end{theorem}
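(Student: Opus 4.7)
The plan is to reduce Higman's theorem to the cyclic case established in Theorem \ref{rd10}, by decomposing the abelian group $G$ into cyclic factors and then expressing $RG$ as an iterated tensor product. First, I would invoke the fundamental theorem of finite abelian groups to write $G \cong C_{m_1} \times C_{m_2} \times \cdots \times C_{m_k}$, where each $C_{m_i}$ is cyclic of order $m_i$ and $n = m_1 m_2 \cdots m_k$. Since the exponent of $G$ is $m$, each $m_i$ divides $m$; by the earlier proposition that a halidon ring with index $m$ is also a halidon ring with index any divisor of $m$, the ring $R$ is halidon with index $m_i$ for every $i$. Applying Theorem \ref{rd10} to each cyclic factor then yields $R$-algebra isomorphisms $RC_{m_i} \cong R^{m_i}$.

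Next, I would combine these with the standard decomposition $R(H\times K) \cong RH \otimes_R RK$, valid for any commutative ring $R$ and finite groups $H, K$, obtained as the $R$-bilinear extension of $(h,k) \mapsto h \otimes k$ and checked to respect multiplication on the group-element basis. Iterating this isomorphism $k-1$ times and substituting the cyclic-case result produces
$$RG \;\cong\; RC_{m_1} \otimes_R \cdots \otimes_R RC_{m_k} \;\cong\; R^{m_1} \otimes_R \cdots \otimes_R R^{m_k} \;\cong\; R^{m_1 m_2 \cdots m_k} \;=\; R^n,$$
where the last step uses the elementary isomorphism $R^a \otimes_R R^b \cong R^{ab}$ of $R$-algebras with componentwise operations.

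The main obstacle is a careful verification in the intermediate step that $R(H\times K) \cong RH \otimes_R RK$ is an isomorphism of $R$-\emph{algebras}, not merely of $R$-modules; this requires checking that the products $(h_1,k_1)(h_2,k_2) = (h_1 h_2,\, k_1 k_2)$ and $(h_1 \otimes k_1)(h_2 \otimes k_2) = h_1 h_2 \otimes k_1 k_2$ correspond under the canonical map on a basis of group elements. The remainder of the argument is essentially a bookkeeping exercise assembling previously established results, and the hypothesis that $n$ is a unit in $R$ is in any case automatic, since each $m_i$ divides $m$ and is therefore invertible, making the product $n = m_1 m_2 \cdots m_k$ invertible as well.
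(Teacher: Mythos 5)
Your proof is correct. Note first that the paper itself does not prove Theorem \ref{rd13} in the text: it is stated without proof, with references to \cite{gh}, \cite{gk} and the author's earlier papers \cite{at}, \cite{ath}. The proof in that tradition is constructive: it extends the explicit isomorphism used for the cyclic case in Theorem \ref{rd10}, namely the map $\rho(u)=(\lambda_{1},\lambda_{2},\dots,\lambda_{m})$ sending a group-ring element to its generalized discrete-Fourier coordinates, built from the character group $G^{*}\cong G$ of homomorphisms $\Gamma^{[s]}:G\rightarrow U(R)$ that the paper introduces; each coordinate is obtained by evaluating $u=\sum_{g}a_{g}g$ at a character, and invertibility of $m$ (and $n$) makes the Vandermonde-type matrix invertible. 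Your route is the structural one instead: the fundamental theorem of finite abelian groups, the isomorphism $R(H\times K)\cong RH\otimes_{R}RK$ of $R$-algebras, the reduction of each factor to Theorem \ref{rd10} via the divisor proposition (each $m_{i}$ divides the exponent $m$, being the order of an element of $G$), and $R^{a}\otimes_{R}R^{b}\cong R^{ab}$. Both arguments are complete; yours buys brevity and modularity at the cost of the explicit formula for the isomorphism, which the paper's computational sections (Theorems \ref{rd4} and \ref{rd9}, and the unit/idempotent programmes) actively rely on, so the character-based construction is not an incidental choice there. Your closing observation is also correct and worth making explicit: since each $m_{i}$ divides $m$ and $m\in U(R)$, each $m_{i}$ is a unit, hence $n=m_{1}m_{2}\cdots m_{k}$ is automatically a unit, so that hypothesis in the statement is redundant.
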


\section{Main Result- Maschke's Theorem}
In this section, we discuss the characters of a group over halidon rings and Maschke's Theorem. Usually, the character of a group is defined over a real field $\mathbb{R}$ or a complex field $\mathbb{C}$. But, here we define character of a group over a halidon ring which need not be a field. Since the field of complex numbers is a halidon ring with any index greater than 1, all the properties of characters over halidon rings will automatically satisfied over the field of complex numbers. Now we are looking into those properties of characters of a group over a complex field which are also true over halidon rings. \\  \\ Let $R$ be a commutative halidon ring with index $m$ and let $G$ be a finite group of order $n$ with exponent $m$ such that $n$ is invertible in $R$. A homomorphism  $\rho : G\rightarrow GL(k,R)$ is defined as a \textit{representation} of $G$ over $R$ of \textit{degree} $k$, where $GL(k,R)$ is the general linear group of invertible matrices of order $k$ over $R$.  The \textit{character} of  G is defined as a homomorphism $\chi : G\rightarrow R $ such that
  $\chi (g)=tr(\rho(g))$ for $\forall \ g \in G $ where $tr(\rho(g)$ is the trace of the matrix $\rho(g)$. A representation is  \textit{faithful} if it is injective. When $k=1$, we call the character $\chi$ as  \textit{linear} character and it is actually a homomorphism $\chi : G\rightarrow u(R) $. In the standard definition, by a \textit{linear} character $\chi$ of G, mean a homomorphism $\chi: G\rightarrow \mathbb{C}-\{0\}$. If $R=\mathbb{C}$, then $U(R)=\mathbb{C}-\{0\}$. So the linear characters are precisely  $ \Gamma^{[r]}$.
   Thus we have the following theorem which is already known to be true for linear characters of a group over $\mathbb{C}$.
   \begin{theorem}
  Let R be a commutative halidon ring with index m and let G be a group of order n and exponent m such that n is invertible in R.  Let $G^{*}$ be the set of homomorphisms $\Gamma^{[s]}$ of G into U(R), which are given by(*). Then $G^{*}$ is a group under the product $\Gamma^{[s]}\Gamma^{[t]}=\Gamma^{[s+t]}$ and $G^{*}\cong G.$
  \end{theorem}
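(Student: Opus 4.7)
Since every homomorphism $G\to U(R)$ factors through the abelianisation of $G$, the conclusion $G^{\ast}\cong G$ already forces $G$ to be abelian, so I would invoke the structure theorem and write $G=\langle e_{1}\rangle\times\cdots\times\langle e_{k}\rangle$ with $|e_{i}|=m_{i}$ dividing the exponent $m$. For each $i$ set $\omega_{i}:=\omega^{m/m_{i}}$, so that $\omega_{i}^{m_{i}}=\omega^{m}=1$. I would then use Theorem \ref{rd2} to verify that $\omega_{i}$ is a primitive $m_{i}$-th root of unity in $R$: any divisor $d$ of $m_{i}$ also divides $m$, and $\omega_{i}^{d}-1=\omega^{dm/m_{i}}-1$, with $dm/m_{i}$ a proper divisor of $m$, hence a unit in $R$ by assumption. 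The formula
\[
\Gamma^{[s]}\!\bigl(e_{1}^{r_{1}}\cdots e_{k}^{r_{k}}\bigr)=\omega_{1}^{s_{1}r_{1}}\cdots\omega_{k}^{s_{k}r_{k}}
\]
then produces, for each $s=(s_{1},\ldots,s_{k})\in G$, a homomorphism $\Gamma^{[s]}:G\to U(R)$ (well defined because $\omega_{i}^{m_{i}}=1$ and landing in $U(R)$ because each $\omega_{i}$ is a unit).

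The group-theoretic step is purely formal. From the exponent laws, for any $g=e_{1}^{r_{1}}\cdots e_{k}^{r_{k}}$,
\[
\Gamma^{[s]}(g)\,\Gamma^{[t]}(g)=\prod_{i=1}^{k}\omega_{i}^{(s_{i}+t_{i})r_{i}}=\Gamma^{[s+t]}(g),
\]
so pointwise multiplication stays inside $G^{\ast}$, the constant map $\Gamma^{[0]}\equiv 1$ is an identity, and $\Gamma^{[-s]}$ is a two-sided inverse. This makes $G^{\ast}$ a group and simultaneously shows that $\Phi:G\to G^{\ast}$, $s\mapsto\Gamma^{[s]}$, is a surjective homomorphism.

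The one step carrying real content is injectivity of $\Phi$, which is where the halidon hypothesis is essential (in an arbitrary commutative ring a polynomial of degree $m_{i}$ can have many more than $m_{i}$ roots, so one cannot invoke the usual field argument). Suppose $\Gamma^{[s]}=\Gamma^{[t]}$; evaluating at each $e_{i}$ gives $\omega_{i}^{s_{i}-t_{i}}=1$. Writing $r:=(s_{i}-t_{i})\bmod m_{i}$, if $r\neq 0$ the summation clause in the definition of a primitive $m_{i}$-th root of unity forces
\[
\sum_{j=0}^{m_{i}-1}\omega_{i}^{jr}=0,
\]
but the equation $\omega_{i}^{r}=1$ collapses the same sum to $m_{i}$. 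Since $m_{i}\mid m$ and $m\in U(R)$, the integer $m_{i}$ is a unit in $R$ and in particular nonzero, giving a contradiction. Hence $s\equiv t$ componentwise, $\Phi$ is injective, and $G^{\ast}\cong G$. The main obstacle is precisely this last contradiction: everything else is a bookkeeping exercise, but ruling out extraneous $m_{i}$-th roots of unity requires the summation-style definition of primitivity together with the invertibility of the involved integers in $R$.
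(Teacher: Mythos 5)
The paper never actually proves this theorem: its ``proof'' is the single line ``Refer to \cite{ath} for the proof,'' and even the defining formula $(*)$ for $\Gamma^{[s]}$ appears nowhere in this article. So there is no internal argument to compare yours against --- your write-up fills the gap, and it is essentially correct. Your reconstruction of $(*)$ (decompose the abelian $G$ as $\langle e_{1}\rangle\times\cdots\times\langle e_{k}\rangle$ with $|e_{i}|=m_{i}\mid m$, set $\omega_{i}=\omega^{m/m_{i}}$, let $\Gamma^{[s]}$ send $e_{i}\mapsto\omega_{i}^{s_{i}}$) is surely the intended one, and your observation that $G$ must be abelian is right: $U(R)$ is abelian, so $|\mathrm{Hom}(G,U(R))|\leq|G/[G,G]|<|G|$ for nonabelian $G$, making the stated isomorphism impossible otherwise. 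Two points to tighten. First, your appeal to Theorem \ref{rd2} is technically out of scope: Theorem \ref{rd2} and Lemma \ref{rd3} are stated for \emph{finite} commutative rings, whereas this theorem assumes only a commutative halidon ring (e.g.\ $\mathbb{C}$). You can avoid them entirely: for $0<r<m_{i}$ put $s=rm/m_{i}$, so $0<s<m$ and $\omega^{m_{i}s}=1$; the sequence $j\mapsto\omega^{js}$ is periodic with period dividing $m_{i}$, hence $0=\sum_{j=0}^{m-1}\omega^{js}=(m/m_{i})\sum_{j=0}^{m_{i}-1}\omega_{i}^{jr}$, and since $m/m_{i}$ divides the unit $m$ it is itself a unit, giving $\sum_{j=0}^{m_{i}-1}\omega_{i}^{jr}=0$ with no finiteness assumption. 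Second, you slightly overstate where the ``real content'' lies: your own order computation already shows $\omega_{i}$ has multiplicative order exactly $m_{i}$ (because $\omega$ has order $m$), and then $\omega_{i}^{s_{i}-t_{i}}=1$ forces $m_{i}\mid s_{i}-t_{i}$ immediately; the summation-clause contradiction is a correct alternative, not the only route. With these adjustments your argument is a complete proof of the theorem, which is more than the paper itself provides.
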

  \begin{proof} Refer to \cite{ath} for the proof.
  \end{proof}
 Suppose that $\rho : G\rightarrow GL(k,R)$ is  a \textit{representation} of $G$ over $R$. Write $V=R^{k}$, the module of all row vectors
 $(\lambda_{1},\lambda_{2},...,\lambda_{k})$ with each $\lambda_{i} \in R $. For all $v \in R$ and $ g \in G$, the matrix product $$ v \rho(g),$$ of the row vector $v$ with the matrix $\rho(g)$ is a row vector in $V$. With the multiplication $vg=v \rho (g)$, $V$ becomes an $RG$-module. Refer to \cite{gj}.\\
 The next theorem is the Maschke's Theorem for a cyclic group over a halidon ring.
 \begin{theorem} \label{rd11}
 Let G be a finite cyclic group of order m and let R be a commutative halidon ring with index m. Let $V=R^{m}$ be an RG-module. Then there are m irreducible RG-submodules $U_{1}, U_{_{2}},......., U_{m}$  such that $V=U_{1}\oplus U_{2} \oplus ...... \oplus U_{m}$.
 \end{theorem}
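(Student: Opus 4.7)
The plan is to decompose $V$ explicitly by constructing a complete family of orthogonal idempotents in $RG$ out of the primitive $m$-th root of unity $\omega$. Writing $G=\langle g\rangle$, I would set
$$e_i \;=\; \frac{1}{m}\sum_{r=0}^{m-1} \omega^{-(i-1)r}\, g^r \;\in\; RG, \qquad i=1,\ldots,m.$$
The orthogonality relation $\sum_{r=0}^{m-1}\omega^{r(i-j)} = m$ when $i\equiv j\pmod m$ and $0$ otherwise, which is built into the very definition of a primitive $m$-th root, is exactly the tool that makes these into pairwise orthogonal idempotents summing to $1$.

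First I would expand $e_i e_j$ as a double sum, reindex by $t=r+s$, factor out $\omega^{-(j-1)t}$, and apply the orthogonality relation to the remaining inner sum to obtain $e_i e_j = \delta_{ij}e_i$. The same relation applied to $\sum_i e_i$ yields $\sum_{i=1}^m e_i = 1$. A one-line reindexing of the defining sum gives the key identity $g\cdot e_i = \omega^{i-1} e_i$.

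Setting $U_i := e_i V$, the two properties $\sum e_i = 1$ and $e_i e_j = \delta_{ij}e_i$ force the internal direct sum $V = U_1\oplus\cdots\oplus U_m$ as $R$-modules, and since $RG$ is commutative each $e_i$ is central and so each $U_i$ is a genuine $RG$-submodule. Moreover, since $g$ acts on $U_i$ as the scalar $\omega^{i-1}$, every $R$-submodule of $U_i$ is automatically an $RG$-submodule; hence the irreducibility question reduces to an $R$-module statement about $U_i$.

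The delicate step is the irreducibility of the $U_i$. I would invoke Theorem \ref{rd10} to identify $V$ with $R^m$ as an $RG$-module via the algebra isomorphism $RG \cong R^m$, and then match $U_i = Re_i$ with a single rank-one factor in this product decomposition; the $m$ pieces are separated by the distinct linear characters $g\mapsto \omega^{i-1}$. This identification, rather than the idempotent bookkeeping, is where I expect the write-up to require the most care, since ``irreducible'' over a halidon ring that need not be a field is most naturally read as corresponding to the minimal factors in the Wedderburn-type decomposition $RG\cong R^m$ of Theorem \ref{rd10}.
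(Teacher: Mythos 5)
Your proposal is correct, but it proves the theorem by a genuinely different route than the paper. The paper works at the matrix level: it takes the regular representation, writes $\rho(g)$ as the circulant matrix $\mathrm{circ}(0,0,\ldots,1)$, exhibits the $m$ eigenvectors $(1,\omega^{r-1},(\omega^{r-1})^{2},\ldots,(\omega^{r-1})^{m-1})$ with eigenvalues $\omega^{r-1}$, sets $U_r$ to be the span of the $r$-th eigenvector, checks directly that $u\rho(g)=\omega^{r-1}u$ keeps each $U_r$ invariant, and then simply posits projections $\pi_r(v)=u_r$ with $\pi_r^{2}=\pi_r$ to conclude $V=U_1\oplus\cdots\oplus U_m$. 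That last step is the weak point of the paper's argument: defining $\pi_r$ by sending $v$ to ``its component in $U_r$'' presupposes that the eigenvectors form an $R$-basis of $R^{m}$, i.e.\ that the Vandermonde matrix $\Phi$ is invertible (which does hold, since $\frac{1}{m}\Phi\Phi^{*}=I$ by the orthogonality relation, but the paper never says so). Your idempotent construction $e_i=\frac{1}{m}\sum_{r=0}^{m-1}\omega^{-(i-1)r}g^{r}$ supplies exactly these projections intrinsically: the relations $e_ie_j=\delta_{ij}e_i$, $\sum_i e_i=1$ and $g\,e_i=\omega^{i-1}e_i$ all follow from the orthogonality relation that the paper builds into its definition of a primitive $m$-th root (a point worth stressing, since over a general ring that relation is an axiom here, not a consequence of $\omega^{m}=1$), and the direct sum $V=\bigoplus e_iV$ then comes for free rather than being asserted. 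So your write-up actually closes the gap the paper leaves open, and it is the same machinery the paper itself uses later, in Section 5, where programme-5 computes the idempotents $e_1,\ldots,e_6$ in $\mathbb{Z}_{49}C_6$ and observes that this ``confirms the theorem.'' On irreducibility both arguments are equally thin: the paper offers no justification at all that the $U_r$ are irreducible (and over a halidon ring like $\mathbb{Z}_{49}$ a rank-one free module is not simple as an $R$-module), so your explicit decision to read ``irreducible'' as ``minimal factor in the decomposition $RG\cong R^{m}$ of Theorem~\ref{rd10}'' is a fair --- indeed the only workable --- interpretation, and flagging it is more care than the original proof takes. One small caveat: your identification $U_i=Re_i$ tacitly takes $V$ to be the regular module, which matches the paper's intent but is not forced by the bare statement ``let $V=R^{m}$ be an $RG$-module''; for an arbitrary $RG$-structure on $R^{m}$ the pieces $e_iV$ need not be free of rank one.
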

 \begin{proof}
 Let $\rho$ be the regular representation of $G=<g | g^{m}=1>$. Then $$ \rho (g)= cirulant (0,0,0,....,1)=\left(
                                                                                                         \begin{array}{ccccc}
                                                                                                           0 & 0 & 0 ....& 0 & 1\\
                                                                                                           1 & 0 & 0 ....& 0 & 0 \\
                                                                                                           . & . &  .....& . & . \\
                                                                                                           0 & 0 & 0 ....& 1 & 0 \\
                                                                                                         \end{array}
                                                                                                       \right)
 $$ is a square matrix of order $m$ and each row is obtained by  moving one position right and wrapped around of the  row above.  The eigen values are $$1, \ \omega, \  \omega^{2}\ ......, \omega^{m-1},$$ where $ \omega $ is a primitive  $m^{th}$ root of unity and with corresponding eigen vectors $(1, \ 1,...., \ 1),  (1, \ \omega, \  \omega^{2}\ ......, \omega^{m-1}),  (1, \ \omega^{2}, \  (\omega^{2})^{2}\ ......, (\omega^{m-1})^{2}), .....,$ \\ $(1, \ \omega^{m-1}, \  (\omega^{m-1})^{2}\ ......, (\omega^{m-1})^{m-1})$.
 \\ Let $U_{r}=span \{(1, \ \omega^{r-1}, \  (\omega^{r-1})^{2}\ ......, (\omega^{r-1})^{m-1}) \} $. Then any element $u_{r}$ of $U_{r}$ can be taken as $\lambda (1, \ \omega^{r-1}, \  (\omega^{r-1})^{2}\ ......, (\omega^{r-1})^{m-1})$ for some $\lambda \in R $. Therefore  \begin{eqnarray*} ug&=&u\rho(g) \\ &=& \lambda (1, \ \omega^{r-1}, \  (\omega^{r-1})^{2}\ ......, (\omega^{r-1})^{m-1})cirulant (0,0,0,....,1) \\ &=& \lambda\omega^{r-1}(1, \ \omega^{r-1}, \  (\omega^{r-1})^{2}\ ......, (\omega^{r-1})^{m-1}) \in U_{r}.  \end{eqnarray*} \\ This means that $U_{r}$ is an $RG$-submodule of $V$ for each $r=1,2,3,....m$. \\
 We define $\pi_{r}: V \rightarrow V $ by $$ \pi_{r}(v)=u_{r} \in U_{r}.$$ Then $\pi_{r}^{2}=\pi_{r}$ and therefore $V=U_{1}\oplus U_{2} \oplus ...... \oplus U_{m}$.
 \end{proof}
 \begin{corollary}
  Let $G$ be a finite cyclic group of order $k$ and let R be a \\ commutative halidon ring with index m. Let $V=R^{k}$ be an RG-module where   k $|$  m. Then there are k irreducible RG-submodules $U_{1}, U_{_{2}},......., U_{k}$  such that $V=U_{1}\oplus U_{2} \oplus ...... \oplus U_{k}$.
 \end{corollary}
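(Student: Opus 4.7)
The plan is to reduce the corollary directly to Theorem~\ref{rd11} by changing the ``index'' used to witness the halidon property. The hypothesis $k \mid m$ together with the earlier proposition (the one asserting that a commutative halidon ring with index $m$ is also a halidon ring with index $k$ for every divisor $k>1$ of $m$) is exactly what is needed to move from the index $m$ regime into the index $k$ regime, where Theorem~\ref{rd11} applies verbatim.

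Concretely, I would first dispose of the trivial case $k=1$: then $G$ is the trivial group, $V=R$ is itself an irreducible $RG$-module, and the statement holds with a single summand. For $k>1$, I would invoke the divisor proposition to conclude that $R$ is a commutative halidon ring with index $k$; in particular $R$ contains a primitive $k^{\text{th}}$ root of unity $\omega_{k}$ and $k$ is invertible in $R$. At this point the hypotheses of Theorem~\ref{rd11} are met with $k$ in the role of $m$, so there exist irreducible $RG$-submodules $U_{1},\dots,U_{k}$ of $V=R^{k}$ whose direct sum is $V$.

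The only thing to check is that the hypothesis ``$V=R^{k}$ is an $RG$-module'' in the corollary matches the regular-representation setup used in the proof of Theorem~\ref{rd11}: since $G$ is cyclic of order $k$, the circulant action of a generator $g$ on $R^{k}$ used there is the natural one, and the eigenvectors $\bigl(1,\omega_{k}^{r-1},(\omega_{k}^{r-1})^{2},\dots,(\omega_{k}^{r-1})^{k-1}\bigr)$ again span one-dimensional $RG$-submodules $U_{r}$. The projections $\pi_{r}:V\to U_{r}$ constructed in the proof of Theorem~\ref{rd11} then yield the desired direct-sum decomposition.

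The main obstacle, if any, is purely bookkeeping: one must make sure that the halidon-index is shifted from $m$ to $k$ before quoting Theorem~\ref{rd11}, since Theorem~\ref{rd11} requires the index of the halidon ring to coincide with the order of the cyclic group. Once that shift is justified by the divisor proposition, the rest of the argument is a direct appeal to Theorem~\ref{rd11}.
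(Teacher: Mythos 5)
Your proposal is correct and matches the paper's own proof in essence: the paper also reduces to Theorem~\ref{rd11} by shifting the index from $m$ to $k$, simply exhibiting $\omega_{1}=\omega^{c}$ (where $m=ck$) as a primitive $k^{\text{th}}$ root of unity rather than citing the divisor proposition as you do. Your explicit handling of the trivial case $k=1$ and the remark that $k$ must also be invertible (immediate since $m=ck$ is) are minor tidinesses the paper leaves implicit.
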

 \begin{proof}
  Since  k $|$ m, we can take m=ck. Then $\omega_{1}=\omega^{c}$ is a primitive $k^{th}$ root of unity. Applying the above theorem, the result follows.
 \end{proof}
 \begin{theorem}[\textbf{Mascheke's Theorem}] \label{rd8}
Let G be a finite group of order n with exponent m, let R be a commutative halidon ring with index m and n is invertible in R. Let $V=R^{m}$ be an RG-module. If U is an RG-submodule of V, then there is an RG-submodule W such that $V=U\oplus W$.
 \end{theorem}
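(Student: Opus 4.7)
The plan is to replicate the classical averaging proof of Maschke's theorem, using that $n$ is invertible in $R$ to turn an $R$-linear projection into an $RG$-linear one. Concretely, I would first obtain an $R$-linear retraction $\pi: V \to U$, then form
$$\pi'(v) = \frac{1}{n}\sum_{g \in G} g^{-1}\,\pi(g \cdot v),$$
and take $W = \ker \pi'$.

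The averaging step reduces to three routine verifications. First, $\pi'(V) \subseteq U$: each summand $g^{-1}\pi(gv)$ lies in $g^{-1}U = U$ by $RG$-stability of $U$. Second, $\pi'|_U = \mathrm{id}_U$: for $u \in U$ we have $gu \in U$, hence $\pi(gu) = gu$ and $g^{-1}\pi(gu) = u$; summing over $G$ gives $n\,u$, and the factor $n^{-1}$ returns $u$. Third, $\pi'$ is $RG$-linear: for any $h \in G$, the substitution $g \mapsto gh^{-1}$ in the defining sum for $\pi'(h\cdot v)$ produces $h\,\pi'(v)$, and $R$-linearity is clear since $\pi$ is $R$-linear and the $G$-action is $R$-linear. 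Together these give $\pi'^{2}=\pi'$, so setting $W=\ker\pi'$ yields $V = \mathrm{im}\,\pi' \oplus \ker\pi' = U \oplus W$ with both summands $RG$-submodules.

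The main obstacle I expect to meet is producing the preliminary $R$-linear retraction $\pi$. Over a field, every subspace is a direct summand and such a $\pi$ is free of charge; over a general commutative ring an $R$-submodule of a free module need not admit an $R$-complement, so this point is genuinely nontrivial here. I would attack it by leveraging the halidon structure of $R$ together with the fact that $V = R^{m}$ has rank equal to the halidon index, so that the primitive $m^{th}$ root $\omega$ supplies an eigenbasis for the actions of cyclic subgroups of $G$ in the spirit of Theorem \ref{rd11} and its corollary. Restricting the $G$-action to cyclic subgroups and gluing the resulting decompositions should realise $U$ as the direct sum of its intersections with the corresponding $R$-submodules, giving the needed $R$-linear projection. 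This is the delicate place where halidon-specific properties beyond invertibility of $n$ must be used, and once it is settled the averaging argument above finishes the proof.
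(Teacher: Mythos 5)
Your averaging half is exactly the paper's own argument: the paper also starts from a projection $\phi$ of $V$ onto $U$ along a complement $W_{0}$ and forms $\tau(v)=\frac{1}{n}\sum_{g\in G}g^{-1}\phi(gv)$, checking $RG$-linearity by the substitution $h=gx$, $\tau|_{U}=\mathrm{id}$, and $\tau^{2}=\tau$; that part of your proposal is correct and matches the source. The genuine gap is precisely where you predicted it, and your sketch for closing it does not work. For a single cyclic subgroup the orthogonal idempotents $e_{r}=\frac{1}{m}\sum_{k=0}^{m-1}\omega^{-rk}g^{k}\in RG$ do give $U=\bigoplus_{r}e_{r}U$ with $e_{r}U\subseteq e_{r}V$, but this only decomposes $U$ compatibly with the eigenspaces; it does not make $e_{r}U$ an $R$-direct summand of $e_{r}V$, which is what a retraction $\pi:V\to U$ requires, and ``gluing'' across different cyclic subgroups is not even well defined since their eigenbases need not be simultaneously compatible. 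Worse, over a halidon ring with zero divisors no retraction need exist at all: take $R=\mathbb{Z}_{49}$, which is a halidon ring of index $m=2$ with $\omega=48$, let $G=C_{2}$ act trivially on $V=R^{2}$, and let $U=7R\times\{0\}$, an $RG$-submodule. Since $R$ is local, finite $R$-modules satisfy Krull--Schmidt, and $V\cong R\oplus R$ cannot have the non-free module $7R\cong R/7R$ as a direct summand; so no complement $W$ and no projection $\pi$ exist. The obstruction is not a missing trick: for arbitrary $RG$-submodules $U$ the statement itself fails unless $U$ is already an $R$-direct summand (for instance free, with a basis extending to one of $V$).

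It is worth knowing that the paper's proof stumbles at the same point. It asserts that $U$ has a basis $\{v_{1},\dots,v_{p}\}$ (already false for $U=7R\times\{0\}$, which has no basis) and then extends it to a basis of $V$ by a zero-divisor argument: from $\alpha_{p+1}\beta_{p+1}=0$ it derives $\beta_{p+1}\alpha_{i}=0$ for $i=1,\dots,p$ and concludes ``$\beta_{p+1}$ must be zero,'' which is unjustified --- all the $\alpha_{i}$ could lie in $7\mathbb{Z}_{49}$, say, with $\beta_{p+1}=7\neq 0$. So your instinct that the preliminary retraction is the delicate, halidon-specific step is exactly right, and your honest flagging of it is to your credit; but neither your eigenspace-gluing sketch nor the paper's basis-extension argument actually establishes it, and the counterexample above shows it cannot be established without adding a hypothesis on $U$ (e.g., that $U$ is a free $R$-module whose basis extends to a basis of $V$, as in the worked $\mathbb{Z}_{p}$ and $S_{3}$ examples following Theorem \ref{rd8}, where the theorem does hold).
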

 \begin{proof}
  First choose any submodule $W_{0}$ of V such that $V=U\oplus W_{0}$. Since $V$ is a free module with rank $m$, we can find  a basis $ \{v_{1}, ..., v_{p}\}$ of U. Next we have to show that this basis can be extended to a basis $ \{ v_{1}, ..., v_{p}, v_{p+1},....,v_{m} \}$ of $V$.
Let $v_{p+1} \notin span\{v_{1}, ..., v_{p}\}$. Suppose that $$ \alpha_{1}v_{1}+\alpha_{2}v_{2}+.....+\alpha_{p}v_{p}+\alpha_{p+1}v_{p+1}=0.$$ Then

$$\alpha_{p+1}v_{p+1}=- \alpha_{1}v_{1}-\alpha_{2}v_{2}-.....-\alpha_{p}v_{p}$$
If $\alpha_{p+1}=0$, then $ v_{1}, ..., v_{p}, v_{p+1}$ are linearly independent. \\
If $\alpha_{p+1} \in U(R)$; the unit group of $R$, then $v_{p+1} \in span\{v_{1}, ..., v_{p}\}$, which is a contradiction. So $\alpha_{p+1} \notin U(R)$. \\
If $\alpha_{p+1} \neq 0 \in ZD(R)$; the set of zero divisors of $R$, then there exists \\ a $\beta_{p+1} \neq 0 \in ZD(R)$ such that $ \alpha_{p+1}\beta_{p+1}=0$. \\
 $$ \Rightarrow  \quad - \beta_{p+1}\alpha_{1}v_{1}-\beta_{p+1}\alpha_{2}v_{2}-.....-\beta_{p+1}\alpha_{p}v_{p}=0$$
 $ \Rightarrow \quad \beta_{p+1}\alpha_{i}=0  $ for  all  $i=1,2,3,...,p$.
Since the above is true for all $i=1,2,3,...,p$, $ \beta_{p+1}$ must be zero. This is a contradiction as $\beta_{p+1} \neq 0 $. Therefore $\alpha_{p+1}$ must be zero.
So $ v_{1}, ..., v_{p}, v_{p+1}$ are linearly independent. Continuing like this we can extend a basis $ \{v_{1}, ..., v_{p}\}$ of U to a basis $ \{ v_{1}, ..., v_{p}, v_{p+1},....,v_{m} \}$ of $V$.
Then $W_{0} =span \{v_{p+1}, ..., v_{m} \}$. \\ Rest of the proof is in line with \cite{gj}.
Now for all $v \in V$ there exist unique vectors $u \in U$ and $w \in W_{0}$ such that $v = u + w$.
We define $\phi : V \rightarrow V$ by setting $\phi (v)=u$. Recall from algebra that if $V=U \oplus W$, and if we
define $\pi: V \rightarrow V$ by
$$ \pi (u + w) = u \quad  for \ all \ u \ \in U, w \in W$$
then $\pi$ is an endomorphism of $V$. Moreover, $Im(\pi) =U$, $Ker(\pi) = W$ and $\pi^{2}=\pi$. With
this in mind we see that $\phi$ is a projection of $V$ with kernel $W_{0}$ and image $U$. Our aim is
to modify the projection $\phi$ to create an $RG$-homomorphism from $V \rightarrow V$ with image $U$.\\
Define $\tau : V \rightarrow V$ by
$$ \tau(v)=\frac{1}{n}\sum_{g \in G}g^{-1} \phi(gv), \quad v \in V $$
Then $ \tau $ is an endomorphism of $V$ and $Im(\tau ) \subseteq U$.
Now we will show that $ \tau $ is an $RG$-homomorphism. For $v \in V$ and $x \in G$ we have
$$ \tau(xv)=\frac{1}{n}\sum_{g \in G}g^{-1} \phi(gxv) $$
As g runs through the elements of G, so does $h = gx$. Thus we have \begin{eqnarray*}
 \tau(xv)&=&\frac{1}{n}\sum_{h \in G}xh^{-1} \phi(hv) \\
&=& \frac{1}{n}x \left(\sum_{h \in G}h^{-1} \phi(hv)\right) \\ &=& x \tau (v)\end{eqnarray*}

Thus $ \tau$ is an $RG$-homomorphism.

It remains to show that $\tau$ is a projection with image $U$. To show that $\tau$ is a projection
it suffices to demonstrate that $\tau^{2}=\tau$.
  Note that given $u \in U$ and $g \in G$, we have $gu \in U$,
so $\phi(gu)=gu$. Using this we see that:
\begin{eqnarray*}
\tau(u)&=&\frac{1}{n}\sum_{g \in G}g^{-1} \phi(gu) \\
&=&\frac{1}{n}\sum_{g \in G}g^{-1} gu \\
&=&\frac{1}{n}\sum_{g \in G}u \\
&=&u
\end{eqnarray*}
Now let $v\in V$. Then $\tau(v) \in U$, so we have $\tau(\tau(v))=\tau(v)$ . We have shown that $\tau^{2}=\tau$.

 \end{proof}
 In \cite{gj}, the authors have stated the Maschke's theorem for vector spaces over the field of real numbers $\mathbb{R}$ or complex numbers $\mathbb{C}$ and provided an example where Maschke's theorem can fail(see chapter 7) if the field is not a $\mathbb{R}$ or $\mathbb{C}$. In the light of the theorem \ref{rd8}, we can still have a vector space over the field $\mathbb{Z}_{p}$; which is a halidon ring with index $m=p-1$ for prime p. By theorem \ref{rd5}, there is a primitive $m^{th}$ root of unity $\omega$ such that $U(\mathbb{Z}_{p})=<\omega>$. Let $G=C_{m}=<a : a^{m}=1>$ and let $R=\mathbb{Z}_{p}$. Note that we cannot take $G$ as $C_{p}=<a : a^{p}=1>$ as $|G|=p$ has no multiplicative inverse in $\mathbb{Z}_{p}$. Let $V=R^{2}$ and let $\{ v_{1}, v_{2} \}$ be the standard basis for $V$. We define $\sigma: G \longrightarrow GL(2,R)$ by $\sigma(a^{j})= \left( \begin{array}{cc}
                                                                                                                              \omega^{j} & 0 \\
                                                                                                                              0 & \omega^{j}
                                                                                                                            \end{array}
  \right)$ for $j=1,2,3,....m.$. Clearly $\sigma$ is a representation of $G$. Also, $U=span \{ v_{1} \}$ is an $RG$-submodule of V. Define $W=span \{ v_{1}+v_{2}\}$, which is also an $RG$-submodule of $V$.
  Any element $v$ in $V$ can be written as $v=(\alpha_{1}-\alpha_{2})v_{1} + \alpha_{2}(v_{1}+v_{2})$ for some $\alpha_{1}, \alpha_{2} \in R$. If $x \in U\cap W$, then $x \in U$ and $x \in W$. So $x=\lambda_{1}v_{1}=(\lambda_{1},0)$ and $x=\lambda_{2}(v_{1}+v_{2})=(\lambda_{2}, \lambda_{2})$ for some $\lambda_{1}, \lambda_{2} \in R$. This implies $x=(0,0)$ and hence $V=U \oplus W$ as desired.
 \begin{definition}
 Let $R$ be a commutative halidon ring with index $m \ > 2 $ and let $S_{n}$ be the symmetric group on n symbols such that $|S_{n}|=m$. Let $S_{n}= \{ g_{1}, g_{2}, g_{3}, ..., g_{m}\}$ be in some order and let $ \{ e_{g_{1}}, e_{g_{2}}, e_{g_{3}}, ..., e_{g_{m}} \}$ be the standard basis for $V=R^{m}$. Define $\rho(g)(h)=e_{gh}$ for all $g, \ h \in S_{n}$. This is called the \textbf{permutation} \textbf{representation} of $S_{n}$.
 \end{definition}

\begin{example}
Let us order the elements of $S_{3}$ as follows: $$S_{3}= \{ g_{1}=id, \ g_{2}= (1,2) \ g_{3}= (1,3) \ g_{4}=(2,3) \ g_{5}=(1,2,3) \ g_{6}=(1,3,2)\}.$$ Then the composition table is given as below.

\begin{center}
\begin{tabular}{|c|c|c|c|c|c|c|}

         \hline
           & $g_{1}$ &  $g_{2}$ &  $g_{3}$ &  $g_{4}$ &  $g_{5}$ &  $g_{6}$ \\
            \hline
         $g_{1}$ &  $g_{1}$ &  $g_{2}$ & $g_{3}$ &  $g_{4}$ &  $g_{5}$ &  $g_{6}$ \\

         $g_{2}$ &  $g_{2}$ &  $g_{1}$ &  $g_{5}$ &  $g_{6}$ &  $g_{3}$ &  $g_{4}$ \\

         $g_{3}$ &  $g_{3}$ &  $g_{6}$ &  $g_{1}$ &  $g_{5}$ &  $g_{4}$ &  $g_{2}$ \\

         $g_{4}$ &  $g_{4}$ &  $g_{5}$ &  $g_{6}$ &  $g_{1}$ &  $g_{2}$ &  $g_{3}$ \\

         $g_{5}$ &  $g_{5}$ &  $g_{4}$ &  $g_{2}$ &  $g_{3}$ &  $g_{6}$ &  $g_{1}$ \\

        $g_{6}$ &  $g_{6}$ &  $g_{3}$ &  $g_{4}$ &  $g_{2}$ &  $g_{1}$ &  $g_{5}$ \\

        \hline

       \end{tabular}
\end{center}
Let $R$ be a halidon ring with index m=6 and $V=R^{6}$ (for example, $R=Z_{49}$). Let $ \{ e_{g_{1}}, e_{g_{2}}, e_{g_{3}}, ..., e_{g_{6}} \}$ be the standard basis for $V$. The representation $\rho$ is given by $\rho(g_{i})g_{j}=e_{g_{i}g_{j}}$. Using the composition table, for example, we can see that $$\rho(g_{3})=\left(
\begin{array}{cccccc}
0 & 0 & 1 & 0 & 0 & 0 \\
0 & 0 & 0 & 0 & 0 & 1 \\
1 & 0 & 0 & 0 & 0 & 0  \\
0 & 0 & 0 & 0 & 1 & 0 \\
0 & 0 & 0 & 1 & 0 & 0 \\
0 & 1 & 0 & 0 & 0 & 0 \\
\end{array}
\right)$$
Let $W_{0}=span \{ e_{g_{1}}, e_{g_{2}}, e_{g_{3}}, e_{g_{4}}, e_{g_{5}}\}$ and $U= span \{ e_{g_{1}}+ e_{g_{2}}+ e_{g_{3}}+e_{g_{4}}+e_{g_{5}}+ e_{g_{6}}\} $. Then clearly $V=W_{0}\oplus U$. We define a projection $\phi(e_{g_{i}})=0, for \ i=1,2,3,4,5$ and $\phi(e_{g_{6}})=e_{g_{1}}+ e_{g_{2}}+ e_{g_{3}}+e_{g_{4}}+e_{g_{5}}+ e_{g_{6}}.$ From the proof of Maschke's Theorem, we have $ \tau ( e_{g_{i}})=\frac{1}{6}( e_{g_{1}}+ e_{g_{2}}+ e_{g_{3}}+e_{g_{4}}+e_{g_{5}}+ e_{g_{6}} ) for \ all \ i=1,2,3,4,5,6$. Let $W=Ker \ \tau = span \{ e_{g_{1}}- e_{g_{2}}, e_{g_{2}}- e_{g_{3}}, e_{g_{3}}- e_{g_{4}}, e_{g_{4}}- e_{g_{5}}, e_{g_{5}}- e_{g_{6}}\}$. Then $wg \in W$ for all $w \in W $ and $g \in S_{3}$. Therefore $W$ is an $RS_{3}$ submodule and $V=U\oplus W$ as expected from the Maschke's Theorem.
\end{example}
\section{Rososhek’s problem}
The Rososhek's problem is a problem related to a cryptosystem using group rings \cite{knp}. Let R be a commutative ring and G be a group. We say that an automorphism $\psi : RG \longrightarrow RG$ is \textit{standard} if it is defined by some ring automorphism $\alpha : R \longrightarrow R$ and by a group automorphism $\sigma : G \longrightarrow G$, so that  $$\psi(x) = \sum_{g \in G}\alpha (a_{g})\sigma(g)$$ for every $$x =\sum_{g \in G}a_{g}g \in RG$$. \\
\textbf{Rososhek’s problem}: For which finite commutative rings R and finite groups G does the group ring
$RG$ have standard automorphisms only? \\
We refer to group rings $RG$ possessing just standard automorphisms as \textit{automorphically rigid}, and the problem then consists in defining
finite \textit{automorphically rigid} group rings.
In \cite{knp}, the author has proved the following theorem for a finite field:
\begin{theorem}
Let $K$ be a finite field of characteristic $p$ and G a finite group. A group algebra $KG$is
automorphically rigid if and only if one of the following conditions holds: \\ \begin{enumerate}
\item G is a trivial group, i.e., G = e;
\item G is a cyclic group of odd prime order q, i.e., $G = C_{q}$, q = 2, and $K = F_{p}$ is a prime field of characteristic
p, with p a primitive root modulo q;
\item G is a direct product of an order 2 cyclic group and a cyclic group $C_{q}$ of odd prime order q = 2, i.e.,
 $G = C_{2} \times C_{q}$, with 2 a primitive root modulo q, and $K = F_{2}$ is a two-element field;
\item $G$ is a permutation group on three symbols, i.e., $G = S_{3}$, and $K = F_{2}$ is a two-element field.
\end{enumerate}
\end{theorem}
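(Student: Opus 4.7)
The plan is to compute $\mathrm{Aut}(KG)$ structurally, via the Wedderburn--Artin decomposition of the semisimple quotient $KG/J(KG)$, and compare it with the group of standard automorphisms, whose cardinality divides $|\mathrm{Aut}(K)|\cdot|\mathrm{Aut}(G)|$. I would split the argument along the semisimple/modular divide $p\nmid|G|$ versus $p\mid|G|$. In the semisimple case Maschke's theorem (Theorem~\ref{rd8}) together with Wedderburn, plus the fact that finite division algebras are fields, gives
\[ KG \;\cong\; \prod_{i=1}^{r} M_{n_i}(F_{q_i}). \]
Every $K$-algebra automorphism of $KG$ permutes the simple factors within each isomorphism class and, on each fixed factor, acts by an inner automorphism composed with a Galois automorphism of the centre (Skolem--Noether). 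Consequently, a non-standard automorphism can arise only from one of three sources: (a) a non-trivial permutation of isomorphic simple components; (b) an inner automorphism of a matrix block $M_{n_i}(F_{q_i})$ with $n_i\ge 2$ not realised by conjugation inside $G$; or (c) a Galois automorphism of some $F_{q_i}/K$ not coming from $\mathrm{Aut}(K)\times\mathrm{Aut}(G)$.

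For the sufficiency direction I would verify each of the four listed cases directly. The trivial case $G=\{e\}$ is immediate. For $G=C_q$ with $p$ a primitive root modulo $q$, the cyclotomic polynomial $\Phi_q$ stays irreducible over $F_p$, so $F_pC_q\cong F_p\times F_{p^{q-1}}$; the two factors are commutative and non-isomorphic, $\mathrm{Aut}(F_p)$ is trivial, and $\mathrm{Gal}(F_{p^{q-1}}/F_p)\cong C_{q-1}\cong\mathrm{Aut}(C_q)$, so the counts balance exactly. The case $G=C_2\times C_q$ over $F_2$ uses $F_2(C_2\times C_q)\cong F_2C_2\otimes_{F_2}F_2C_q$ together with the vanishing of $\mathrm{Aut}(F_2)$ and reduces, after isolating the nilpotent contribution from $F_2C_2$, to the previous count. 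For $G=S_3$ over $F_2$ one obtains $F_2S_3/J\cong F_2\times M_2(F_2)$; the two factors have different dimensions, so they cannot be swapped, and a direct calculation shows $\mathrm{Aut}(F_2\times M_2(F_2))\cong PGL_2(F_2)\cong S_3\cong\mathrm{Aut}(S_3)$, so everything is standard.

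The necessity direction is the heart of the theorem: outside the listed four cases one must exhibit an extra automorphism. Taking $K\ne F_p$ immediately supplies a coordinatewise Galois automorphism of the Wedderburn decomposition that is not of the form $\alpha\otimes\sigma$; two isomorphic simple summands of $KG$ supply a swap automorphism foreign to $\mathrm{Aut}(G)$; and a non-trivial matrix block of size $\ge 2$ produces inner automorphisms far in excess of conjugations available from $G$. The main obstacle, which I expect to be the hardest step, is the modular regime $p\mid|G|$: there $KG$ is no longer semisimple, Wedderburn controls only $KG/J(KG)$, and one must show that every algebra automorphism preserves the radical filtration $J\supset J^2\supset\cdots$, lift the classification through this filtration, and rule out additional automorphisms supported on the nilpotent part. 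This is precisely where the specific arithmetic of $S_3$ over $F_2$ becomes decisive, and I would defer the detailed radical-lifting bookkeeping to \cite{knp}, presenting here only the structural skeleton outlined above.
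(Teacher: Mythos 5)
First, a point of comparison that matters for this review: the paper does not prove this statement at all. It is quoted (typos included, e.g.\ ``$q=2$'' where ``$q\neq 2$'' is clearly meant) from Ponomaryov, introduced by ``In \cite{knp}, the author has proved the following theorem,'' and the paper immediately moves on to the non-field case. So there is no internal proof to measure you against; your outline would have to stand in for the cited source itself. Judged that way, your skeleton is a reasonable reconstruction of the expected strategy --- Wedderburn--Artin on $KG/J(KG)$, Skolem--Noether plus Galois action on each simple block, permutation of isomorphic blocks, and a cardinality comparison with the standard automorphisms (where you should at least remark that the natural map from $\mathrm{Aut}(K)\times\mathrm{Aut}(G)$ into $\mathrm{Aut}(KG)$ is injective, because the group elements are $K$-linearly independent in $KG$, so matching counts really does force rigidity). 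But it is not a proof: you explicitly defer ``the detailed radical-lifting bookkeeping'' to \cite{knp}, which is the very result being established, so as a standalone argument the proposal is circular precisely where the theorem is hard.

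The gap is not merely one of unfinished detail: two of the four rigid cases (items 3 and 4) are modular, with $p=2$ dividing $|G|$, so your semisimple machinery never applies to them directly, and your claimed reductions there fail as written. Take item 3 with $q=3$: one has $F_2(C_2\times C_3)\cong F_2[t]/(t^2)\times F_4[s]/(s^2)$, and the second (local) factor admits the six ring automorphisms $a+bs\mapsto \varphi(a)+\varphi(b)c\,s$ with $\varphi\in\mathrm{Gal}(F_4/F_2)$ and $c\in F_4^{\times}$ (the subfield $F_4=\{x:x^4=x\}$ and the radical $F_4 s$ are both characteristic, so this is the full automorphism group), whereas $|\mathrm{Aut}(F_2)|\cdot|\mathrm{Aut}(C_2\times C_3)|=2$. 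Hence the assertion that this case ``reduces, after isolating the nilpotent contribution from $F_2C_2$, to the previous count'' is untenable at face value: the radical genuinely contributes extra automorphisms, and one must either show these are standard under the precise normalization of ``standard'' used in \cite{knp} or reconcile the statement with them --- exactly the bookkeeping your outline skips. The same problem recurs for $S_3$ over $F_2$: your computation $\mathrm{Aut}(F_2\times M_2(F_2))\cong S_3$ concerns $KG/J$, not $KG$; you still need that the map $\mathrm{Aut}(KG)\to\mathrm{Aut}(KG/J)$ has trivial kernel (here $J$ is the span of $\sum_{g\in S_3}g$, so this requires an actual argument about automorphisms acting unipotently on the filtration) and that its image consists of standard automorphisms. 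Finally, the necessity direction is only gestured at: the claim that a matrix block of size $\geq 2$ yields ``inner automorphisms far in excess of conjugations available from $G$'' must be quantified, precisely because in the $S_3/F_2$ case the inner automorphisms of the $M_2(F_2)$ block are exactly exhausted by conjugation from $G$.
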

Still, the question remains unanswered for a finite commutative ring which is not a field.
\begin{proposition} \label{rd14}
Let $R=\dfrac{\mathbb{Z}_{n}[X]}{ (X^{2}-1)}$. Then the number of automorphisms of $R$, $|Aut R|$ is given by\\
\begin{eqnarray*}
 |Aut R| &=& 2 \ if \ n=p^{s} \ where \ s \geq 1 \ is \ an \ integer \ and \ p \ is \ an \ odd \ prime \ number \\
 &=&2^{k} \ if  \ n=p_{1}^{e_{1}}p_{2}^{e_{2}}p_{3}^{e_{3}}.....p_{k}^{e_{k}} \ with \ prime \ numbers \ 2<p_{1}<p_{2}<.....<p_{k}.  \\
 \end{eqnarray*}
 \end{proposition}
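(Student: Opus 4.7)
The plan is to identify ring automorphisms of $R=\mathbb{Z}_n[X]/(X^{2}-1)$ with the square roots of unity in $\mathbb{Z}_n$, and then count the latter via the Chinese Remainder Theorem. I would first observe that any ring automorphism $\phi:R\to R$ sends $1$ to $1$ and therefore fixes the prime subring $\mathbb{Z}_n\cdot 1$ pointwise. Since $R$ is a free $\mathbb{Z}_n$-module with basis $\{1,\bar X\}$ (where $\bar X$ denotes the class of $X$), the map $\phi$ is automatically $\mathbb{Z}_n$-linear and is completely determined by $y:=\phi(\bar X)=c+d\bar X$ for some $c,d\in\mathbb{Z}_n$.

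Next I would determine which pairs $(c,d)$ actually yield an automorphism. Because $\bar X^{2}=1$, the assignment extends to a well-defined ring endomorphism $\phi_y:a+b\bar X\mapsto(a+bc)+bd\bar X$ exactly when $y^{2}=1$, which unpacks to the two congruences
\[
c^{2}+d^{2}\equiv 1\pmod n,\qquad 2cd\equiv 0\pmod n.
\]
Relative to the basis $\{1,\bar X\}$, the $\mathbb{Z}_n$-linear map $\phi_y$ has matrix of determinant $d$, so $\phi_y$ is bijective precisely when $d\in U(\mathbb{Z}_n)$. Thus $|\mathrm{Aut}(R)|$ equals the number of pairs $(c,d)$ with $d\in U(\mathbb{Z}_n)$ satisfying the two congruences above.

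Now I would use the hypothesis that $n$ is odd (every listed prime exceeds $2$). Then $2\in U(\mathbb{Z}_n)$, and $2cd\equiv 0$ combined with $d\in U(\mathbb{Z}_n)$ forces $c=0$. The constraint collapses to $d^{2}\equiv 1\pmod n$, and any such $d$ is automatically a unit, being its own inverse. Hence $\mathrm{Aut}(R)$ is in natural bijection with $\{d\in\mathbb{Z}_n:d^{2}=1\}$.

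Finally, writing $n=p_{1}^{e_{1}}\cdots p_{k}^{e_{k}}$ with the $p_i$ odd primes, the Chinese Remainder Theorem gives $\mathbb{Z}_n\cong\prod_{i}\mathbb{Z}_{p_{i}^{e_{i}}}$, and in each odd-prime-power factor $\mathbb{Z}_{p^{e}}$ the only solutions to $d^{2}=1$ are $\pm 1$ (the unit group is cyclic of even order $p^{e-1}(p-1)$, hence contains a unique element of order two). Multiplying across the $k$ components yields $|\mathrm{Aut}(R)|=2^{k}$, which specialises to $2$ in the single-prime case $n=p^{s}$. The one genuinely delicate step is the bijectivity criterion via the determinant $d$; everything else is then routine algebra combined with CRT bookkeeping, and the oddness of $n$ enters only to kill the term $c$ (which is precisely why the author restricts to primes $p_i>2$).
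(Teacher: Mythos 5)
Your proposal is correct and follows essentially the same route as the paper's proof: both observe that any automorphism fixes $\mathbb{Z}_{n}$ pointwise, write the image of the class of $X$ as $c+d\bar{X}$, use the relation $\phi(\bar{X})^{2}=1$ together with the oddness of $n$ to force $c=0$ and $d^{2}=1$, and thereby identify $\mathrm{Aut}(R)$ with the set of involutions in $\mathbb{Z}_{n}$. The only differences are minor points of care on your side: you certify bijectivity via the determinant $d$ of the induced $\mathbb{Z}_{n}$-linear map, where the paper instead deduces unit-ness of the coefficient from the existence of a preimage of $x$, and you carry out the CRT count of square roots of unity in full (cyclicity of $U(\mathbb{Z}_{p^{e}})$ for odd $p$), a step the paper compresses to ``clearly follows'' with a pointer to its computer program.
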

 \begin{proof}
This proof is somewhat similar to example 2 in \cite{nm}.  Let $x$ be the image of $X$ in $R$. Any element in $R$ can be uniquely written as $ax+b$ with $a,b \in \mathbb{Z}_{n}$. Let $\sigma \in AutR$.
 Then $\sigma(a)=a$ for every $a \in \mathbb{Z}_{n}$. Therefore $\sigma(x)=ax+b$ for some $a, \ b \in \mathbb{Z}_{n} $. Since $\sigma $ is an automorphism, there exists an element $px+q$ with $p, \ q \in \mathbb{Z}_{n}$ such that $x=\sigma(px+q)=p\sigma(x)+q=pax+pb+q$. then we get $pa=1$ and so $a$ must be a unit in $\mathbb{Z}_{n}$. Further, if $\sigma(x)=ax+b$ with $a \in U(\mathbb{Z}_{n})$, we must also have
 $0=\sigma(x^{2}-1)=\sigma(x^{2})-1=(ax+b)^{2}-1=a^{2}x^{2}+2abx+b^{2}-1=a^{2}(x^{2}-1)+2abx+a^{2}+b^{2}-1=2abx+a^{2}+b^{2}-1$. \\
 Since $n$ is odd, $2ab=0\Longrightarrow b=0$ as $a \in U(\mathbb{Z}_{n})$. This implies $ a^{2}=1. $ Therefore $|AutR|$=no. of involutions in $\mathbb{Z}_{n}$. But, the number of involutions in $\mathbb{Z}_{n}$ clearly follows the statement in the proposition. Programme-3 mentioned below is useful to compute involutions in $\mathbb{Z}_{n}$.
 \end{proof}
 If $\mathbb{Z}_{n}$ is halidon ring with index $m$, then so is $\mathbb{Z}_{n}[X]$. By proposition \ref{rd12}, $\dfrac{\mathbb{Z}_{n}[X]}{ (X^{2}-1)}$ is also a halidon ring with index $m$.
\begin{proposition}
Let $R=\dfrac{\mathbb{Z}_{n}[X]}{ (X^{2}-1)}$ be a halidon ring with index $m$ and let $G=C_{m}$ be a cyclic group of order $m$. If $m!=2^{k} \times \phi(m)$ for some positive integer $k$, then $RG$ is automorphically rigid.
\end{proposition}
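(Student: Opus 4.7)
The plan is to compare the total number of ring automorphisms of $RG$ with the number of standard automorphisms, and then force equality using the numerical hypothesis $m!=2^{k}\phi(m)$.

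First, I would invoke Higman's theorem (Theorem~\ref{rd13}) to obtain the $R$-algebra isomorphism $RG\cong R^{m}$, valid because $G=C_{m}$ is abelian of exponent $m$ and $m\in U(R)$. This decomposition exhibits $m$ distinguished primitive $R$-algebra idempotents of $RG$, given in terms of the primitive $m^{\text{th}}$ root of unity $\omega$ by
\[
e_{s}=\frac{1}{m}\sum_{r=0}^{m-1}\omega^{-rs}g^{r},\qquad s=0,1,\dots,m-1,
\]
and it transports the analysis from the group-ring setting to the transparent product ring $R^{m}$.

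Second, I would count the standard automorphisms. Each standard $\psi$ is uniquely recovered from its restrictions $\alpha=\psi|_{R}\in\text{Aut}(R)$ and $\sigma=\psi|_{G}\in\text{Aut}(G)$, so distinct pairs $(\alpha,\sigma)$ give distinct $\psi$. By Proposition~\ref{rd14} the factor $|\text{Aut}(R)|$ is a power of $2$, and $|\text{Aut}(C_{m})|=\phi(m)$. Under the matching of the exponent with the $k$ of the hypothesis, the number of standard automorphisms is exactly $2^{k}\phi(m)$, which by the hypothesis equals $m!$.

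Third, I would show that $|\text{Aut}(RG)|\le m!$. The argument would run: any ring automorphism of $RG$ must send the distinguished family $\{e_{0},\dots,e_{m-1}\}$ of primitive idempotents above to a complete orthogonal family of primitive idempotents, and the only such families compatible with the additional structure of the embedded subring $R$ and the embedded group $G$ are the $m!$ permutations of the $e_{s}$. The automorphism is then fully determined by this permutation. Combined with the injection of the $m!$ standard automorphisms into $\text{Aut}(RG)$, equality of counts forces every automorphism of $RG$ to be standard, i.e., $RG$ is automorphically rigid.

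The main obstacle is the third step: ruling out ring automorphisms that act nontrivially \emph{within} a single coordinate factor $R$ of $R^{m}$, since $R$ itself already has $2^{k}$ automorphisms and $R^{m}$ has the further freedom of further splitting via the internal idempotents $\tfrac{1\pm x}{2}$ of $R$. The delicate point is to argue that preservation of the \emph{diagonal} copy of $R$ inside $R^{m}$ (carrying the coefficient ring) together with preservation of the specific embedded copy of $G$ (as powers of the cyclic shift in $R^{m}$) simultaneously eliminates both the coordinate-internal symmetries and the extra permutations of the finer idempotents, collapsing everything to the $m!$ pure coordinate permutations, so that the counting precisely matches the standard side.
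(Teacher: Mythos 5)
You have reconstructed the paper's argument almost verbatim --- Higman's theorem (Theorem \ref{rd13}) to get $RG\cong R^{m}$, Proposition \ref{rd14} to get $|\mathrm{Aut}(R)|=2^{k}$, the count $2^{k}\phi(m)$ of standard automorphisms, and then an attempted ceiling $|\mathrm{Aut}(RG)|\leq m!$ --- and the step you candidly flag as the ``main obstacle'' is exactly the step the paper disposes of in one unproved sentence: ``$\mathrm{Aut}\,R^{m}$ is just the collection of automorphisms which permute the positions of the elements of $R^{m}$,'' hence $|\mathrm{Aut}\,R^{m}|=m!$. Your suspicion that this is the weak point is correct, and in fact the assertion is false, for precisely the reasons you list. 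First, applying a nontrivial $\alpha\in\mathrm{Aut}(R)$ in a single coordinate is already an automorphism of $R^{m}$ that is not a coordinate permutation, so $\mathrm{Aut}(R^{m})$ contains $\mathrm{Aut}(R)^{m}\rtimes S_{m}$, of order $2^{km}\,m!>m!$; note this contradicts the paper's own Proposition \ref{rd14}, which guarantees $|\mathrm{Aut}(R)|\geq 2$. Second, since $n$ is odd, the idempotents $(1\pm x)/2$ split $R\cong\mathbb{Z}_{n}\times\mathbb{Z}_{n}$, and $\mathbb{Z}_{n}$ splits further by the Chinese remainder theorem, so $R^{m}$ is a product of $2mk$ connected local factors whose primitive idempotents an automorphism may shuffle far more freely than the $m$ blocks. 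Your proposed repair --- restricting to idempotent families ``compatible with the embedded subring $R$ and the embedded group $G$'' --- is circular: an arbitrary ring automorphism of $RG$ has no obligation to preserve the embedded copies of $R$ and $G$; preserving them is essentially what being standard means, which is the thing to be proved.

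The gap is not closable, because the statement fails in the only case the hypothesis admits, $m=2$ (the very case the paper then advertises as a solution of Rososhek's problem). Take $R=\mathbb{Z}_{p^{s}}[X]/(X^{2}-1)$ and $G=C_{2}$. Then $RG\cong R^{2}\cong(\mathbb{Z}_{p^{s}})^{4}$, a product of four isomorphic connected rings each with trivial automorphism group, so $|\mathrm{Aut}(RG)|=4!=24$, whereas there are only $|\mathrm{Aut}(R)|\cdot|\mathrm{Aut}(C_{2})|=2\cdot 1=2$ standard automorphisms. Concretely, writing $e_{\pm}=(1\pm x)/2$ and $f_{\pm}=(1\pm g)/2$, the automorphism swapping the primitive idempotents $e_{+}f_{+}$ and $e_{-}f_{+}$ while fixing $e_{+}f_{-}$ and $e_{-}f_{-}$ sends $x$ to $-xg$, so it does not map the coefficient ring $R\cdot 1$ into itself and cannot be standard. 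So your plan is the paper's plan, your count of the standard side is fine, but your step three is a genuine, fatal gap --- one that the paper's own proof shares without acknowledging it.
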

\begin{proof}
Clearly $R$ is not a field. By theorem \ref{rd13}, $RG\cong R^{m}$. So $|AutRG|=|AutR^{m}|$. But, $AutR^{m}$ is just the collection of automorphisms which permute the positions of the elements of $R^{m}$. Thus we have $|AutR^{m}|=m!$. By proposition \ref{rd14}, we have $|AutR|=2^{k}$ for some positive integer $k$. So $m!=2^{k} \times \phi(m)$ implies $|AutRG|=|AugR|\times |AugG|$ and therefore $RG$ is automorphically rigid.
\end{proof}
The only solution to the above proposition is $m=2$. So one of the solutions (there may have other solutions) to the Rososhek’s problem is given by $R=\dfrac{\mathbb{Z}_{p^{s}}[X]}{ (X^{2}-1)}$ where $p$ is an odd prime, $s \geq 1$ is an integer and $\omega=p^{s}-1=-1mod p^{s} \in \mathbb{Z}_{p^{s}}$ is a primitive $m^{th}$ root of unity and $G=C_{2}$.
\section{The computational aspects of halidon rings and halidon group rings}
The main purpose this section is to verify Maschke's Theorem using some computer codes. The computer programme-5, is very useful to verify the Maschke's theorem for cyclic group. Throughout this section, let $R=\mathbb{Z}_{n}$ be a halidon ring with index $m$ and primitive $m^{th}$ root of unity $\omega$. Let $G =<g>=\{g_{1}=1,g_{2}=g,......,g_{m}=g^{m-1}\}$ be a cyclic group of order $m$ generated  by $g$. We study the computational aspects of finding halidon rings for any integer $n>2$ and computing the units and idempotents in the halidon group ring $RG$ based on the related theorems.
\begin{definition} \label{rd7}
Let $p_{1},p_{2},p_{3},....,p_{k}$ be odd primes and let $\phi(x)$ be the Euler's totient function. We define the \textit{halidon function}  $$\psi(n)= \begin{cases} gcd \{ \phi(p_{1}^{e_{1}}), \phi(p_{2}^{e_{2}}),\phi(p_{3}^{e_{3}}),...., \phi(p_{k}^{e_{k}})\}, & n=p_{1}^{e_{1}}p_{2}^{e_{2}}p_{3}^{e_{3}}.....p_{k}^{e_{k}} \\
1, & n \ \text{is even}\end{cases} $$
\end{definition}
\begin{proposition}
Let $n$ be as in \ref{rd7}. Then the halidon function $$\psi(n)=gcd\{ p_{1}-1,p_{2}-1,p_{3}-1,....,p_{k}-1\},$$ which is independent of the exponents $e_{1},e_{2},e_{3},....,e_{k}$.
\begin{proof}
The proof follows immediately from the fact that $p_{1},p_{2},p_{3},....,p_{k}$ are distinct primes.
\end{proof}
\end{proposition}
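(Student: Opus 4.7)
The plan is to compare the two gcds prime-by-prime via $q$-adic valuations, exploiting the factorization $\phi(p_i^{e_i}) = p_i^{e_i - 1}(p_i - 1)$ together with the pairwise distinctness of $p_1, \ldots, p_k$.

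The easy half is immediate: since $p_i - 1$ divides $\phi(p_i^{e_i})$ for every $i$, any common divisor of the numbers $p_i - 1$ is also a common divisor of the numbers $\phi(p_i^{e_i})$, so $\gcd\{p_i - 1\}$ divides $\gcd\{\phi(p_i^{e_i})\}$. For the reverse, I would fix an arbitrary prime $q$ and reduce the equality to
\[
\min_{i}\, v_q\!\bigl(p_i^{e_i - 1}(p_i - 1)\bigr) \;=\; \min_{i}\, v_q(p_i - 1),
\]
using that $v_q(\gcd)$ equals the minimum of the individual $q$-adic valuations. When $q \notin \{p_1, \ldots, p_k\}$, each $p_i^{e_i - 1}$ is a $q$-adic unit, so the two valuations agree termwise and their minima coincide, which is the generic and uncontroversial case.

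The delicate case is $q = p_{i_0}$ for some $i_0$. Distinctness of the primes gives $\gcd(p_{i_0}, p_j^{e_j - 1}) = 1$ for every $j \neq i_0$, so the $j$-th entries of the two lists have the same $p_{i_0}$-adic valuation, and only the $i_0$-th entry is inflated on the left by the factor $p_{i_0}^{e_{i_0} - 1}$. On the right-hand side the minimum is already $0$, attained at $i = i_0$ because $p_{i_0}\nmid p_{i_0} - 1$. The main obstacle is verifying that the left-hand minimum is likewise $0$, i.e., that some index $j\neq i_0$ satisfies $v_{p_{i_0}}(p_j - 1) = 0$. This is the step I would spend the most care on: the distinctness hypothesis is what the author intends to carry this point, and assembling it into an airtight argument — in particular controlling what happens when $p_{i_0}$ might \emph{a priori} divide several of the $p_j - 1$ — is where essentially all the content of the proposition sits.
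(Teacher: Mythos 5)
Your valuation framework is set up correctly, and you have isolated exactly the right crux --- but the step you flagged as ``the main obstacle'' is not a gap that care can close: it is false, and with it the proposition itself. Take $n = 63 = 3^2\cdot 7$. Then by Definition \ref{rd7}, $\psi(63)=\gcd\{\phi(3^2),\phi(7)\}=\gcd\{6,6\}=6$, whereas $\gcd\{p_1-1,p_2-1\}=\gcd\{2,6\}=2$. In your language: at $q=p_{i_0}=3$ the left-hand list has valuations $v_3(\phi(9))=e_1-1=1$ and $v_3(\phi(7))=v_3(6)=1$, so the left-hand minimum is $1$, not $0$ --- precisely the scenario you worried about, where $p_{i_0}$ divides every $p_j-1$ with $j\neq i_0$ and $e_{i_0}>1$. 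The case $k=1$ fails even more cheaply: $\psi(9)=\gcd\{\phi(9)\}=6\neq 2$. So the ``independent of the exponents'' claim is wrong in general, and no argument can rescue it as stated.

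This also explains why the paper's one-line proof is inadequate: distinctness of the primes gives $p_{i_0}\nmid p_j$, hence $v_{p_{i_0}}(p_j^{e_j-1})=0$, which is the part of your analysis that goes through --- but it says nothing about $p_{i_0}\nmid p_j-1$, which is what the final step actually needs. Your own analysis yields the correct repaired statement: the equality $\gcd_i\{\phi(p_i^{e_i})\}=\gcd_i\{p_i-1\}$ holds if and only if for every $i$ with $e_i>1$ there exists $j\neq i$ with $p_j\not\equiv 1 \pmod{p_i}$ (automatic for every $p_i$ except the smallest, since $p_j-1<p_i$ when $p_j<p_i$); in particular it holds whenever $n$ is squarefree, where it is a tautology. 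Given that the paper's subsequent proposition asserts $\psi(n^k)=\psi(n)$ for odd $n$ (false for $n=3$, $k=2$ under Definition \ref{rd7}, since $\psi(3)=2$ but $\psi(9)=6$) and that the conjecture on the maximal halidon index requires $\gcd(m,n)=1$, the sensible repair is to take the displayed formula $\gcd\{p_1-1,\dots,p_k-1\}$ as the \emph{definition} of $\psi$, not as a theorem about the $\phi$-gcd. You should state the counterexample explicitly rather than leaving the obstruction as a step requiring care.
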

It is well-known that the \textit{carmichael function} $\lambda(n)$ is the exponent of $U(\mathbb{Z}_{n})$. The proof of the following proposition is evident.
\begin{proposition}
If $\lambda(n)$ is the carmichael function, then
\begin{enumerate}
  \item $\psi(n)$ divides $\lambda(n)$,
  \item $\psi(n^{k})=\psi(n)$ if n is odd,
    \item $\psi(p_{1}p_{2}p_{3}....p_{s})=\psi(p_{1}^{d_{1}}p_{2}^{d_{2}}p_{3}^{d_{3}}.....p_{s}^{d_{s}})$ if each integer $d_{i}>0$ for $i=1,2,3,....,s$.
\end{enumerate}
\end{proposition}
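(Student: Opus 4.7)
The plan is to prove the three items in turn, using the reformulation $\psi(n)=\gcd\{p_1-1,\ldots,p_k-1\}$ from the preceding proposition as the main tool for parts (2) and (3). In each case I would first dispose of the ``$n$ even'' branch, where $\psi(n)=1$ trivially satisfies every claim, and then work under the assumption that $n$ is odd with $n=p_1^{e_1}\cdots p_k^{e_k}$.

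For part (1), I would observe that for each odd prime $p_i$ we have $\lambda(p_i^{e_i})=\phi(p_i^{e_i})=p_i^{e_i-1}(p_i-1)$, so by the standard formula for the Carmichael function on coprime factors,
\[
\lambda(n)=\operatorname{lcm}\bigl(\phi(p_1^{e_1}),\phi(p_2^{e_2}),\ldots,\phi(p_k^{e_k})\bigr).
\]
On the other hand $\psi(n)$ is by definition the $\gcd$ of the same list of integers. Since the $\gcd$ of a finite collection of positive integers always divides their $\operatorname{lcm}$, I conclude $\psi(n)\mid\lambda(n)$. If $n$ is even, $\psi(n)=1$ divides $\lambda(n)$ vacuously.

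For parts (2) and (3), the key point is that the reformulated expression $\psi(n)=\gcd\{p_i-1\}$ depends only on the \emph{set} of prime divisors of $n$, not on their multiplicities. For (2), the odd integers $n$ and $n^k$ share exactly the same prime divisors $p_1,\ldots,p_k$, so $\psi(n)=\gcd\{p_1-1,\ldots,p_k-1\}=\psi(n^k)$; if $n$ is even then so is $n^k$, and both sides equal $1$. For (3), since each $d_i\ge 1$, the integers $p_1\cdots p_s$ and $p_1^{d_1}\cdots p_s^{d_s}$ again have the same prime support $\{p_1,\ldots,p_s\}$, and the same reformulation gives equality of the two halidon values.

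I do not anticipate a significant obstacle here: the proof is essentially a bookkeeping exercise once one invokes (a) the standard description of $\lambda$ as an $\operatorname{lcm}$ of local $\phi$-values on an odd modulus, and (b) the prior proposition reducing $\psi$ to a $\gcd$ over $\{p_i-1\}$. The only minor care needed is to separate the even and odd cases in the definition of $\psi$ and to note in part (3) that the hypothesis $d_i>0$ is precisely what keeps the prime support unchanged.
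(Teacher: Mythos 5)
Your proof is correct. The paper itself supplies no argument---after recalling that $\lambda(n)$ is the exponent of $U(\mathbb{Z}_{n})$ it simply declares the proposition ``evident''---and your writeup fills in exactly the intended routine details: part (1) from $\lambda(n)=\operatorname{lcm}\bigl(\phi(p_{1}^{e_{1}}),\ldots,\phi(p_{k}^{e_{k}})\bigr)$ for odd $n$ together with the fact that the gcd of a list of positive integers divides its lcm, and parts (2)--(3) from the preceding proposition, which makes $\psi$ depend only on the prime support (with the even case trivial since $\psi=1$ there).
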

I have developed $5$ computer programme codes in c++ (Microsoft Visual Studio 2019) based on the theorems \ref{rd2}, \ref{rd9} and $2$ programmes for Discrete Fourier Transforms. The programme codes are included in the appendix.

The computer programme-1 can be used to find a halidon ring $Z_{n}$ of given order $n\geq 1$. The author would like to provide its algorithm as follows: \\
***************************** \\
ALGORITHM for Programme-1 \\
***************************** \\

Input: n.\\
Output:$Z_{n}$ is a trivial halidon ring or not. \\
1. If n is even, then $Z_{n}$ is a trivial halidon ring.\\
2. Else \\
for i=1,2,...., n-1\\
for j=1,2,3,,,,,,n-1 do compute $i*j mod n$\\
if  i*j mod n=1, $w\longleftarrow i$\\
3. for i=1,2,3,,,,,, n-1 do compute $w^{i}$\\
4. if $ w^{i} \ mod n=1 \ $, $ m\longleftarrow i$ \\
5. compute divisors d of m and $d < m$ \\
6. compute $w^{d}-1$ and if $w^{d}-1=1modn$ for all d, then $Z_{n}$ is a nontrivial halidon ring with index m. \\
End

 There are infinitely many halidon rings which are not fields. For example, using the programme-1, we can see that: \begin{enumerate}
                                                          \item $\mathbb{Z}_{49}$ is halidon ring  with index $m=6$ and $\omega=19$,
                                                          \item $\mathbb{Z}_{2001}$ is a trivial halidon ring with index $m=2$ and $\omega=2000$,
                                                          \item $\mathbb{Z}_{2501}$ is halidon ring with index $m=20$ and $\omega=8$ or $2493$,
                                                           \item $\mathbb{Z}_{3601}$ is halidon ring with index $m=12$ and $\omega=1350$ or $2528$,
                                                          \item $\mathbb{Z}_{10001}$ is halidon ring with index $m=8$ and $\omega=10$ or $9220$,
                                                           \item $\mathbb{Z}_{100001}$ is halidon ring with index $m=10$ and $\omega=26364$ or $73728$ (running time 35 minutes).
                                                        \end{enumerate}
By running the same programme several times for different values of $n$, the author has come to the conclusion of the following conjecture.
 \begin{conjecture}
If $R=\mathbb{Z}_{n}$ and $n=p_{1}^{e_{1}}p_{2}^{e_{2}}p_{3}^{e_{3}}.....p_{k}^{e_{k}}$ with primes $p_{1}<p_{2}<p_{3}<....<p_{k}$ including 2, then $R$ is a halidon ring with maximal index $m_{max}=\psi(n)$.
\end{conjecture}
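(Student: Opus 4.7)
The plan is to reduce the problem, via the Chinese Remainder Theorem, to each local factor $\mathbb{Z}_{p_i^{e_i}}$, and then reassemble a primitive root of maximal index by stitching locally-optimal lifts together. First, CRT gives $R\cong\prod_{i=1}^{k}\mathbb{Z}_{p_i^{e_i}}$, and an element $\omega=(\omega_1,\ldots,\omega_k)$ witnesses the halidon criterion of Lemma~\ref{rd3} at index $m$ if and only if $\omega$ has multiplicative order exactly $m$ and, for every $i$ and every $r\in\{1,\ldots,m-1\}$, the coordinate $\omega_i^r-1$ is a unit in $\mathbb{Z}_{p_i^{e_i}}$.

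For the upper bound $m\le\psi(n)$, the key observation is that $\omega_i^r-1\in U(\mathbb{Z}_{p_i^{e_i}})$ exactly when the residue $\bar\omega_i^r\neq 1$ in $\mathbb{Z}_{p_i}$, since the only non-units of $\mathbb{Z}_{p_i^{e_i}}$ are multiples of $p_i$. Hence the halidon condition forces the order of $\bar\omega_i$ in $\mathbb{Z}_{p_i}^{*}$ to be at least $m$, and combined with $\omega_i^m=1$ it must equal $m$; since $|\mathbb{Z}_{p_i}^{*}|=p_i-1$, this gives $m\mid p_i-1$ for every $i$, so $m\mid \gcd_i(p_i-1)=\psi(n)$. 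The case $p_1=2$ is absorbed into the same argument: any unit of $\mathbb{Z}_{2^{e_1}}$ reduces to $1\in\mathbb{Z}_2$, so $\omega_1^r-1$ is never a unit, which forces $m=1$ and matches the convention $\psi(n)=1$ for even $n$.

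For achievability when $n$ is odd (the only substantive case), set $M=\psi(n)$ and construct, in each factor, a unit of order $M$ whose reduction mod $p_i$ also has order $M$. Since $p_i$ is odd, $U(\mathbb{Z}_{p_i^{e_i}})$ is cyclic of order $p_i^{e_i-1}(p_i-1)$, and reduction onto $\mathbb{Z}_{p_i}^{*}$ is surjective with kernel of order $p_i^{e_i-1}$; thus any generator $g_i$ reduces to a generator $\bar g_i$ of $\mathbb{Z}_{p_i}^{*}$. Define $\omega_i:=g_i^{\,p_i^{e_i-1}(p_i-1)/M}$ (an integer exponent because $M\mid p_i-1$). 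Then $\omega_i$ has order $M$ in $U(\mathbb{Z}_{p_i^{e_i}})$, and because $\gcd(p_i^{e_i-1},p_i-1)=1$ the reduction-of-a-power computation yields order $(p_i-1)/\gcd(p_i^{e_i-1}(p_i-1)/M,\,p_i-1)=M$ for $\bar\omega_i$ as well. Stitching via CRT gives $\omega\in R$ of order exactly $M$ whose powers satisfy the halidon orthogonality coordinate-wise; invertibility of $M$ in $R$ is automatic, since $M\mid p_i-1$ forces $\gcd(M,p_i)=1$ for every $i$, and hence $\gcd(M,n)=1$.

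The main technical hurdle will be the explicit lift in the achievability half: verifying cleanly that $g_i^{\,p_i^{e_i-1}(p_i-1)/M}$ retains order $M$ after reduction modulo $p_i$, and hence that Lemma~\ref{rd3} applies in each coordinate. Once this order computation is pinned down, everything else is bookkeeping through the CRT isomorphism, together with the elementary remark that a tuple is a unit in a product ring precisely when each coordinate is; the same remark is what lets the halidon orthogonality relation transfer across the CRT decomposition in the first place.
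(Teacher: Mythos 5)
You have not diverged from the paper's route, because the paper has no route: this statement is advanced purely as a conjecture inferred from runs of Programme-1, so your argument is a genuine proof where the paper offers only computational evidence, and its core is sound. The CRT splitting is the right move; the observation that the non-units of $\mathbb{Z}_{p_i^{e_i}}$ are exactly the multiples of $p_i$ correctly converts the criterion of Lemma~\ref{rd3} into the condition that $\bar\omega_i$ have order exactly $m$ in $\mathbb{Z}_{p_i}^{*}$, forcing $m\mid\gcd_i(p_i-1)$ (and note the converse direction of Lemma~\ref{rd3} that your upper bound needs does hold: if $\bar\omega_i$ had order $d<m$, the defining sum $\sum_{j=0}^{m-1}\omega^{dj}$ would reduce to $m\not\equiv 0 \bmod p_i$, contradicting the primitivity relations). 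Your even case is right, since every unit of $\mathbb{Z}_{2^{e_1}}$ is odd, so $\omega_1^r-1$ is never a unit. The achievability half is also correct: with $a=(p_i-1)/M$ one gets $\gcd\bigl(p_i^{e_i-1}a,\,Ma\bigr)=a\gcd(p_i^{e_i-1},M)=a$ because $M\mid p_i-1$ forces $\gcd(p_i,M)=1$, so $\bar\omega_i$ retains order exactly $M$, and $\gcd(M,n)=1$ gives invertibility of $M$.

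One caveat, which you inherit from the paper by writing $\gcd_i(p_i-1)=\psi(n)$: Definition~\ref{rd7} sets $\psi(n)=\gcd\{\phi(p_1^{e_1}),\dots,\phi(p_k^{e_k})\}$, and the paper's proposition identifying this with $\gcd\{p_1-1,\dots,p_k-1\}$ ``independent of the exponents'' is false in general. For $n=9$ one has $\psi(9)=\phi(9)=6$, and for $n=63$ one has $\gcd(\phi(9),\phi(7))=\gcd(6,6)=6$, while $\gcd_i(p_i-1)=2$ in both cases. Under the literal Definition~\ref{rd7} the conjecture itself is therefore false, and your own upper bound is what proves it: $m_{max}(\mathbb{Z}_9)=2\neq 6=\psi(9)$, and indeed $6$ is not even invertible modulo $9$. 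So your argument should be read as proving the corrected statement $m_{max}=\gcd(p_1-1,\dots,p_k-1)$ for odd $n$ (and $m_{max}=1$ for even $n$), which agrees with the paper's intended formula but not with its definitional $\psi$ whenever a factor $p_i^{e_i-1}$ with $e_i>1$ survives in the gcd. With that emendation your proof is complete; it both settles the conjecture and locates the error in the paper's halidon function, since any admissible index must divide each $p_i-1$ and be coprime to $n$, conditions that $\gcd_i\phi(p_i^{e_i})$ can violate.
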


 \begin{theorem}\label{rd4}
 Let $$u= \sum_{i=1}^{m}\alpha_{i}g_{i} \in U(RG)$$ be depending on $\lambda_{1},\lambda_{2},......,\lambda_{m}$. Let $$v=\sum_{i=1}^{m}\beta_{i}g_{i}$$
 be the multiplicative inverse of $u$ in $RG$. Then $$\beta_{i}=\frac{1}{m}\sum_{r=1}^{m} \lambda_{r}^{-1}(\omega^{i-1})^{r-1}.$$
 \end{theorem}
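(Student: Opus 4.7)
The plan is to recognize the claimed formula as a discrete Fourier inversion, relative to the $R$-algebra isomorphism $RG\cong R^{m}$ furnished by Theorem \ref{rd10}. As a first step I would clean up the indexing convention in the definition of $\lambda_r$. The substitution $\ell=m-i+2\pmod m$ merely permutes $\{1,\ldots,m\}$ and converts the exponent $\omega^{(i-1)(r-1)}$ into $\omega^{-(\ell-1)(r-1)}$, so that
\[
\lambda_r \;=\; \sum_{\ell=1}^{m}\alpha_\ell\,\omega^{-(\ell-1)(r-1)}.
\]
Thus $(\lambda_1,\ldots,\lambda_m)$ is the discrete Fourier transform of $(\alpha_1,\ldots,\alpha_m)$ with kernel $\omega^{-1}$, itself a primitive $m^{\text{th}}$ root of unity.

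Next I would realise the abstract isomorphism of Theorem \ref{rd10} concretely as $\Phi:RG\to R^{m}$ with $\Phi(g^{k-1})=(\omega^{-(r-1)(k-1)})_{r=1}^{m}$. Under $\Phi$ the element $u$ is sent to $(\lambda_1,\ldots,\lambda_m)$, so $u\in U(RG)$ forces every $\lambda_r\in U(R)$ by componentwise invertibility, and $\Phi(v)=(\lambda_1^{-1},\ldots,\lambda_m^{-1})$. The asserted formula for $\beta_i$ is then precisely the image of this vector under $\Phi^{-1}$, i.e.\ the inverse DFT.

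Rather than merely quoting the shape of $\Phi^{-1}$, I would verify the formula by direct multiplication. Let $v'=\sum_{j}\beta_j'g^{j-1}$ with $\beta_j':=\tfrac{1}{m}\sum_r\lambda_r^{-1}\omega^{(r-1)(j-1)}$, and expand $uv'=\sum_{k=1}^{m}\gamma_k g^{k-1}$, where $\gamma_k=\sum_{i+j-1\equiv k\,(\bmod\,m)}\alpha_i\beta_j'$. Using $j-1\equiv k-i\pmod m$, the double sum rearranges to
\[
\gamma_k \;=\; \frac{1}{m}\sum_{r=1}^{m}\lambda_r^{-1}\omega^{(r-1)(k-1)}\sum_{i=1}^{m}\alpha_i\omega^{-(r-1)(i-1)} \;=\; \frac{1}{m}\sum_{r=1}^{m}\omega^{(r-1)(k-1)},
\]
which evaluates to $1$ when $k=1$ and to $0$ otherwise by the defining orthogonality of $\omega$. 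Hence $uv'=1$, and uniqueness of inverses in $U(RG)$ forces $v=v'$.

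The main obstacle I anticipate is purely bookkeeping: aligning the $\alpha_{m-i+2}$ convention in the definition of $\lambda_r$ with the DFT form based on $\omega^{-1}$, and keeping the subscripts honest modulo $m$ when expanding $uv'$. Once that reindexing is pinned down, the argument reduces to a single application of the orthogonality relation $\sum_{r=0}^{m-1}\omega^{rk}=m\cdot[k\equiv 0]$, which is exactly the content of the halidon hypothesis on $R$.
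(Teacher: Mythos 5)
Correct, and essentially the paper's own approach: the paper states this theorem without a written proof, but the intended argument---visible in the proof of the companion result on units and idempotents (``the proof follows from the isomorphism $\rho(u)=(\lambda_{1},\ldots,\lambda_{m})$ from $RG$ onto $R^{m}$,'' i.e.\ Theorem~\ref{rd10}) and in the later DFT section where the inverse transform is $\frac{1}{m}\Phi$---is exactly the DFT-inversion argument you give. Your reindexing $\ell=m-i+2$ aligning the $\alpha_{m-i+2}$ convention with the kernel $\omega^{-1}$, and your convolution check $\gamma_{k}=\frac{1}{m}\sum_{r=1}^{m}\omega^{(r-1)(k-1)}$, equal to $1$ for $k=1$ and $0$ otherwise, simply make explicit the orthogonality computation and uniqueness-of-inverse step that the paper leaves implicit.
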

The computer programme-2 can be used to test whether a given element $u$ in $RG$ is a unit or not. If it is a unit, then the programme will give the multiplicative inverse $v$ in $RG$. \\
\textbf{Input}: $n=121,m=10,m^{-1}=109,\omega =94, a[1]=62, a[2]=21, a[3]=22, a[4]=85, a[5]=81, a[6]=95, a[7]=24,a[8]=30, a[9]=1, a[10]=65$ \\
\textbf{Output}: The multiplicative inverse of $a=62+21g+22g^{2}+85g^{3}+81g^{4}+95g^{5}+24g^{6}+30g^{7}+g^{8}+65g^{9} $ is $b=102+68g+34g^{2}+61g^{3}+73g^{4}+54g^{5}+102g^{6}+109g^{7}+18g^{8}+455g^{9} $.\\
\textbf{Input}: $n=121,m=10,m^{-1}=109,\omega =94, a[1]=72, a[2]=71, a[3]=89, a[4]=48, a[5]=54, a[6]=0, a[7]=2,a[8]=105, a[9]=25, a[10]=19$ \\
\textbf{Output}: The multiplicative inverse of $a=72+71g+89g^{2}+48g^{3}+54g^{4}+0g^{5}+2g^{6}+105g^{7}+25g^{8}+19g^{9} $ is $b=72+71g+89g^{2}+48g^{3}+54g^{4}+0g^{5}+2g^{6}+105g^{7}+25g^{8}+19g^{9} $, which is an involution.\\
\textbf{Input}: $n=121,m=10,m^{-1}=109,\omega =94, a[1]=5, a[2]=7, a[3]=2, a[4]=40, a[5]=22, a[6]=90, a[7]=20,a[8]=25, a[9]=10, a[10]=55$ \\
\textbf{Output}: The element $a=5+7g+2g^{2}+40g^{3}+22g^{4}+90g^{5}+20g^{6}+25g^{7}+10g^{8}+56g^{9} $ has no multiplicative inverse. \\
A direct calculation shows that all outputs are correct.
 \begin{theorem}\label{rd4}
 Let $$u= \sum_{i=1}^{m}\alpha_{i}g_{i} \in RG$$ be depending on $\lambda_{1},\lambda_{2},......,\lambda_{m}$.
 Then \begin{enumerate} \label{rd9}
   \item $u \in U(RG)$ if and only if each $\lambda_{i} \in U(R)$,
  \item  $u \in E(RG)$ if and only if each $\lambda_{i} \in E(R)$, where $E(RG)$ is the set of idempotents in $RG$.
   \end{enumerate}
   More over, $|U(RG)|=|U(R)|^{|G|}$ and $|E(RG)|=|E(R)|^{|G|}$.
 \end{theorem}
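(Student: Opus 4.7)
The natural approach is to invoke Theorem \ref{rd10}, which asserts $RG\cong R^{m}$ as $R$-algebras whenever $G$ is cyclic of order $m$ over a commutative halidon ring of index $m$. The plan is to make this isomorphism explicit in the coordinates $(\lambda_{1},\dots,\lambda_{m})$ coming from the ``depending on'' formula, then read off both parts of the theorem from the well-known structure of the direct product $R^{m}$.

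First I would verify that the map $\Phi:RG\to R^{m}$ given by $\Phi(u)=(\lambda_{1},\dots,\lambda_{m})$ is an $R$-algebra isomorphism. $R$-linearity is immediate from the definition of $\lambda_{r}$. For bijectivity, one can either exhibit the inverse DFT formula $\alpha_{i}=m^{-1}\sum_{r=1}^{m}\lambda_{r}\omega^{-(i-1)(r-1)}$ (which follows from the orthogonality relations $\sum_{r=0}^{m-1}\omega^{rk}=m\delta_{k,0}$ built into the definition of a primitive $m$-th root of unity), or observe that $\Phi$ is an $R$-linear map between free $R$-modules of rank $m$ whose matrix is a Vandermonde in the powers of $\omega$, invertible by Lemma \ref{rd3}. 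For multiplicativity, note that each $\lambda_{r}$ is obtained by evaluating the polynomial $\sum_{i}\alpha_{m-i+2}x^{i-1}$ at $x=\omega^{r-1}$; since the map $g\mapsto\omega^{r-1}$ extends to an $R$-algebra homomorphism $RG\to R$ (because $(\omega^{r-1})^{m}=1$), the $r$-th coordinate of $\Phi$ is multiplicative, and hence so is $\Phi$. This is precisely the content underlying Theorem \ref{rd10}.

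Once $\Phi$ is identified as an algebra isomorphism, both parts reduce to the trivial observation that an element of a finite direct product of rings is a unit (respectively an idempotent) if and only if each of its components is. Thus $u\in U(RG)\iff\Phi(u)\in U(R^{m})=U(R)^{m}\iff\lambda_{i}\in U(R)$ for all $i$, and likewise $u\in E(RG)\iff\lambda_{i}\in E(R)$ for all $i$. The cardinality formulas $|U(RG)|=|U(R)|^{m}=|U(R)|^{|G|}$ and $|E(RG)|=|E(R)|^{|G|}$ follow at once by counting tuples.

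The main obstacle is really just bookkeeping: confirming that the unusual shift $m-i+2$ in the definition of ``depending on'' really produces an algebra homomorphism (and not an anti-homomorphism, or a shifted one). Substituting $j\equiv 2-i\pmod{m}$ recasts the formula as $\lambda_{r}=\sum_{j=1}^{m}\alpha_{j}\omega^{-(j-1)(r-1)}$, so $\Phi$ corresponds to evaluating $g\mapsto\omega^{-(r-1)}$; this is indeed an $R$-algebra map, and the orthogonality relations used in the proof of Theorem \ref{rd11} handle the inversion. No step uses that $R$ is a field, so everything passes to a general commutative halidon ring of index $m$.
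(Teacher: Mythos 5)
Your proposal is correct and takes essentially the same approach as the paper: the paper's entire proof is the remark that the result ``follows from the isomorphism $\rho(u)=(\lambda_{1},\lambda_{2},\dots,\lambda_{m})$ from $RG$ onto $R^{m}$'' used to prove Theorem \ref{rd10}, after which units and idempotents are recognized componentwise in the direct product and counted, exactly as you argue. Your write-up simply supplies the details the paper delegates to that citation (multiplicativity via evaluation $g\mapsto\omega^{-(r-1)}$, bijectivity via orthogonality or the Vandermonde matrix), modulo a harmless sign slip: with your convention $\lambda_{r}=\sum_{j}\alpha_{j}\omega^{-(j-1)(r-1)}$, the inversion formula should read $\alpha_{i}=m^{-1}\sum_{r=1}^{m}\lambda_{r}\omega^{(i-1)(r-1)}$.
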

 \begin{proof}
 The proof follows from the isomorphism $\rho(u)=(\lambda_{1},\lambda_{2},......,\lambda_{m})$ from $RG$ onto $R^{m}$. This is the isomorphism used to prove theorem \ref{rd10}.
 \end{proof}
 In order to find a unit element or an involution or an idempotent in $R$, we can use the programme-3.

\textbf{Input}: n=25 \\
\textbf{Output}: Involutions are $1$ and $24$. \\
Idempotents are $0$ and $1$. \\
The units and their inverses are $(1,1), \ (2,13), \ (3,17), \ (4,19), \ (6,21), \ (7,18), \\ \ (8,22), \ (9,14), \ (11,16), \ (12,23), \
(13,2), \ (14,10), \ (16,11), \ (17,3), \ (18,7), \\ \ (9,4), \ (21,5), \ (22,20), \ (23,12)$ and $(24,24)$. \\

 After finding the units and involutions in $R$, using the programme-4, we can compute the units and involutions in $RG$.  \\
\textbf{Input}: $n=25, m=4, m^{-1}=19, \omega =7, l[1]=7, l[2]=3, l[3]=13, l[4]=21, l1[1]=18, l1[2]=17, l1[3]=2, l1[4]=5$ \\
\textbf{Output}: The multiplicative inverse of $a=11+17g+24g^{2}+5g^{3}$ is $b=23+12g^{2}+8g^{3} $. \\
\textbf{Input}: $n=25, m=4, m^{-1}=19, \omega =7, l[1]=1, l[2]=24, l[3]=24, l[4]=1, l1[1]=1, l1[2]=24, l1[3]=24, l1[4]=1$ \\
\textbf{Output}: The multiplicative inverse of $a=22g+4g^{3}$ is $b=22g+4g^{3} $, which is an involution. \\
All outputs can be verified through direct calculations. \\
 After finding the idempotents in $R$, using the programme-5, we can compute the idempotents in $RG$.

\textbf{Input}: $n=49, m=6,m^{-1}=41, \omega=19,l[1]=1,l[2]=0,l[3]=0,l[4]=0,l[5]=0,l[6]=0$ \\
\textbf{Output}: $e_{1}=41+41g+41g^{2}+41g^{3}+41g^{4}+41g^{5}$ \\
\textbf{Input}: $n=49, m=6,m^{-1}=41, \omega=19,l[1]=0,l[2]=1,l[3]=0,l[4]=0,l[5]=0,l[6]=0$ \\
\textbf{Output}: $e_{2}=41+44g+3g^{2}+8g^{3}+5g^{4}+46g^{5}$ \\
\textbf{Input}: $n=49, m=6,m^{-1}=41, \omega=19,l[1]=0,l[2]=0,l[3]=1,l[4]=0,l[5]=0,l[6]=0$ \\
\textbf{Outpu}t: $e_{3}=41+3g+5g^{2}+41g^{3}+3g^{4}+5g^{5}$ \\
\textbf{Input}: $n=49, m=6,m^{-1}=41, \omega=19,l[1]=0,l[2]=0,l[3]=0,l[4]=1,l[5]=0,l[6]=0$ \\
\textbf{Output}: $e_{4}=41+8g+41g^{2}+8g^{3}+41g^{4}+8g^{5}$ \\
\textbf{Input}: $n=49, m=6,m^{-1}=41, \omega=19,l[1]=0,l[2]=0,l[3]=0,l[4]=0,l[5]=1,l[6]=0$ \\
\textbf{Output}: $e_{5}=41+5g+3g^{2}+41g^{3}+5g^{4}+3g^{5}$ \\
\textbf{Input}: $n=49, m=6,m^{-1}=41, \omega=19,l[1]=0,l[2]=0,l[3]=0,l[4]=0,l[5]=0,l[6]=1$ \\
\textbf{Output}: $e_{6}=41+46g+5g^{2}+8g^{3}+3g^{4}+44g^{5}$ \\
Clearly $e_{1},e_{2},e_{3},e_{4},e_{5}$ and $e_{6}$ are orthogonal idempotents such that $e_{1}+e_{2}+e_{3}+e_{4}+e_{5}+e_{6}=1$.
Therefore $\mathbb{Z}_{49}G=U_{1}\oplus U_{2} \oplus U_{3} \oplus U_{4} \oplus U_{5} \oplus U_{6}$ where  $U_{1}=span\{41+41g+41g^{2}+41g^{3}+41g^{4}+41g^{5}\} =span\{1+g+g^{2}+g^{3}+g^{4}+g^{5}\}$, \\ $U_{2}=span\{41+44g+3g^{2}+8g^{3}+5g^{4}+46g^{5}\} =span\{1+\omega g+\omega ^{2}g^{2}+\omega^{3}g^{3}+\omega^{4}g^{4}+\omega^{5}g^{5}\}$, \\$U_{3}=span\{41+3g+5g^{2}+41g^{3}+3g^{4}+5g^{5}\} =span\{1+(\omega)^{2} g+(\omega ^{2})^{2}g^{2}+(\omega^{2})^{3}g^{3}+(\omega^{2})^{4}g^{4}+(\omega^{2})^{5}g^{5}\}$, \\ $U_{4}=span\{41+8g+41g^{2}+8g^{3}+41g^{4}+8g^{5}\}  =span\{1+(\omega)^{3} g+(\omega ^{3})^{2}g^{2}+(\omega^{3})^{3}g^{3}+(\omega^{3})^{4}g^{4}+(\omega^{3})^{5}g^{5}\}$, \\$U_{5}=span\{41+5g+3g^{2}+41g^{3}+5g^{4}+3g^{5}\} =span\{1+(\omega)^{4} g+(\omega ^{4})^{2}g^{2}+(\omega^{4})^{3}g^{3}+(\omega^{4})^{4}g^{4}+(\omega^{4})^{5}g^{5}\}$, \\ $U_{6}=span\{41+46g+5g^{2}+8g^{3}+3g^{4}+44g^{5}\}=span\{1+(\omega)^{5} g+(\omega ^{5})^{2}g^{2}+(\omega^{5})^{3}g^{3}+(\omega^{5})^{4}g^{4}+(\omega^{5})^{5}g^{5}\}$, after multiplying by $6$, the inverse of $41$. This confirms the theorem \ref{rd11}. \\
\textbf{Input}: $n=65, m=4,m^{-1}=49, \omega=8,l[1]=1,l[2]=26,l[3]=40,l[4]=26$ \\
\textbf{Output}: The idempotent in $\mathbb{Z}_{65}G$ is $e=7+39g+46g^{2}+39g^{3}$.\\
A direct calculation verifies that all outputs are correct.

\section{Bilinear Forms and Circulant Matrices}
Let ring $R$ be a commutative halidon ring with index $m$ and primitive $m^{th}$ root of unity $\omega$. Let $G$ be a cyclic group of order $m$, generated by $g$. We define $ g_{i}=g^{i-1} $.  $ \therefore g_{i}g_{j}=g_{i+j-1}$; $i=1,2,3,....,m$. By the extension theorem of Higman, we have $RG\cong R^{m}$ as $R$-algebras and the isomorphism $\rho$ is given by $$\rho\left(\sum_{i=1}^{m}\alpha_{i}g_{i}\right)=(\lambda_{1},\lambda_{2},\lambda_{3}....,\lambda_{m})$$ where $$\lambda_{i}=\sum_{r=1}^{m}\alpha_{m-r+1}(\omega^{i-1})^{r-1}.$$ Since $\{g_{i}\}$ is an $R$-basis for $RG$, $\{\rho(g_{i})\}$ is a basis for $R^{m}$ and \\
$s_{1}=\rho(g_{1})=(1,1,1,...,1)$, \\ $s_{2}=\rho(g_{2})=(1,\omega^{m-1},(\omega^{m-1})^{2},...,(\omega^{m-1})^{m-1})$,\\ $s_{3}=\rho(g_{3})=(1,\omega^{m-2},(\omega^{m-2})^{2},...,(\omega^{m-2})^{m-1})$,\\.............,\\
$s_{m}=\rho(g_{m})=(1,\omega,\omega^{2},...,\omega^{m-1})$. \\
Let $\{e_{i}\}$ be the standard basis in $R^{m}$. Then \\
$s_{1}=e_{1}+e_{2}+e_{3}+...+e_{m}$, \\ $s_{2}=e_{1}+\omega^{m-1}e_{2}+(\omega^{m-1})^{2}e_{3}+...+(\omega^{m-1})^{m-1})e_{m}$,\\ $s_{3}=e_{1}+\omega^{m-2}e_{2}+(\omega^{m-2})^{2}e_{3}+...+(\omega^{m-2})^{m-1})e_{m}$,\\......................................,\\
$s_{m}=e_{1}+\omega e_{2}+\omega^{2}e_{3}+...+\omega^{m-1})e_{m}$. \\
This gives

$\left(
  \begin{array}{c}
    s_{1} \\ s_{2} \\s_{3} \\\dots \\ s_{m}
  \end{array}
\right)$ =
$\left(
  \begin{array}{ccccc}
   1 & 1 & 1 & \dots & 1 \\
1 & \omega^{m-1} & (\omega^{m-1})^{2} & \dots & (\omega^{m-1})^{m-1} \\
1 & \omega^{m-2} & (\omega^{m-2})^{2} & \dots & (\omega^{m-2})^{m-1} \\
\dots  & \dots  & \dots  & \dots & \dots  \\
1 & \omega & (\omega)^{2} & \dots & (\omega^{m-1}) \\
  \end{array}
\right)$
$\left(
  \begin{array}{c}
    e_{1} \\ e_{2} \\e_{3}\\ \dots \\ e_{m}
  \end{array}
\right)$

$$s^{T}=\Phi^{*}e^{T}$$
$$\therefore \quad \quad e^{T}=\frac{1}{m}\Phi s^{T}$$
where $$\Phi=\left(
                                                                        \begin{array}{ccccc}
                                                                          1 & 1 & 1 & ..... & 1 \\
                                                                          1 & \omega & \omega^{2} & ..... & \omega^{m-1} \\
                                                                          1 & \omega^{2} &(\omega^{2})^{2} & ..... & (\omega^{2})^{m-1}\\
                                                                          . & . & . & ..... & . \\
                                                                          . & . & . & ..... & . \\
                                                                          . & . & . & ..... & . \\
                                                                          1 & \omega^{m-1} & (\omega^{m-1})^{2} & ..... & (\omega^{m-1})^{m-1}\\
                                                                        \end{array}
                                                                      \right)$$
and $\Phi^{*}$ is the $\Phi$ conjugate transposed\cite{pjd}. Thus we have the following theorem.
\begin{theorem} \label{r1}
Let ring $R$ be commutative halidon ring with index $m$ and primitive $m^{th}$ root of unity $\omega$. Let $G$ be a cyclic group of order $m$, generated by $g$. We define $ g_{i}=g^{i-1} $  ; $i=1,2,3,....,m$ and let $\{ s_{i}\}$ be the image of $\{ g_{i}\}$ under the isomorphism $RG\cong R^{m}$ and let $\{ e_{i}\}$ be the standard basis for $R^{n}$. Then $s^{T}=\Phi^{*}e^{T}$ or $ e^{T}=\frac{1}{m}\Phi s^{T}$.
\end{theorem}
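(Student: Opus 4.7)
The plan is to compute $s_{j}=\rho(g_{j})$ directly from the explicit formula for the isomorphism $\rho:RG\to R^{m}$ used in the proof of Theorem~\ref{rd10}, then recognise the resulting row vectors as the rows of $\Phi^{*}$. Recall that for $u=\sum_{i=1}^{m}\alpha_{i}g_{i}$ one has $\lambda_{r}=\sum_{i=1}^{m}\alpha_{m-i+2}(\omega^{i-1})^{r-1}$. Specialising to $u=g_{j}$, so that $\alpha_{k}=\delta_{kj}$, forces $i=m-j+2$ and therefore $\lambda_{r}=(\omega^{m-j+1})^{r-1}$. This reproduces exactly the vectors $s_{1},s_{2},\ldots,s_{m}$ listed just above the statement; in particular, the $(j,r)$ entry of the $m\times m$ array whose rows are $s_{1},\ldots,s_{m}$ is $\omega^{(m-j+1)(r-1)}=\omega^{-(j-1)(r-1)}$, which is precisely the $(j,r)$ entry of $\Phi^{*}$. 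Expanding each $s_{j}$ in the standard basis $\{e_{i}\}$ yields $s^{T}=\Phi^{*}e^{T}$ at once.

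For the second identity I would establish $\Phi\Phi^{*}=mI_{m}$ and invert the first relation. The $(i,j)$ entry of $\Phi\Phi^{*}$ is $\sum_{k=1}^{m}\omega^{(i-1)(k-1)}\omega^{-(j-1)(k-1)}=\sum_{k=1}^{m}\omega^{(i-j)(k-1)}$, which by the defining property of a primitive $m^{th}$ root of unity stated in the introduction equals $m$ when $i\equiv j\pmod{m}$ and $0$ otherwise. Premultiplying $s^{T}=\Phi^{*}e^{T}$ by $\Phi$ and then by $m^{-1}\in R$ gives $e^{T}=\tfrac{1}{m}\Phi s^{T}$.

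The only nontrivial ingredient is the orthogonality identity $\Phi\Phi^{*}=mI_{m}$; everything else is direct bookkeeping from the formula for $\rho$. That identity is immediate from the definition of a primitive $m^{th}$ root of unity in a halidon ring, and the invertibility of $m$, which is built into the notion of halidon ring, is exactly what allows the division by $m$ in the second formula. Hence no hypothesis beyond those already assumed in the statement is needed, and I expect no real obstacle.
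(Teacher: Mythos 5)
Your proposal is correct and follows essentially the same route as the paper, which proves the theorem via the displayed computation immediately preceding its statement: expressing each $s_{j}=\rho(g_{j})$ through the $\lambda_{r}$ formula, recognising the resulting array as $\Phi^{*}$, and inverting via $\tfrac{1}{m}\Phi\Phi^{*}=I$. Your write-up is in fact slightly more careful than the paper's, since you explicitly verify the orthogonality identity $\Phi\Phi^{*}=mI_{m}$ from the defining property of a primitive $m^{th}$ root of unity, which the paper only cites implicitly.
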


For each $u=(u_{1},u_{2},u_{3},...,u_{m})\in R^{m}$, we define $C_{u}=circu(u_{1},u_{2},u_{3}...,u_{m})$.
Also, we define $f_{u}: R^{m} \times R^{m}\longrightarrow R$ by $$f_{u}(x,y)=<x,y>_{u}=xC_{u}y^{T}$$ for every $x=(x_{1},x_{2},x_{3}...,x_{m}),y=(y_{1},y_{2},y_{3}...,y_{m}) \in R^{m}$. We adopt some standard definitions of bilinear form for $<x,y>_{u}$.   $<x,y>_{u}$ is said to $symmetric$ if $<x,y>_{u}=<y,x>_{u}$ for every $x,y \in R^{m}$. It is said to be $skewsymmetric$ if $<x,y>_{u}=-<y,x>_{u}$ for every $x,y \in R^{m}$. $<x,y>_{u}$ is said to $alternating$ if $<x,x>_{u}=0$ for every $x \in R^{m}$ \cite{sl}.
\begin{theorem} \label{r2}
$f_{u}$ is a bilinear form and
\begin{eqnarray*}  <s_{i},s_{j}>_{u}&=&m(u_{1}+u_{2}\omega^{i-1}+u_{3}(\omega^{i-1})^{2}+...+u_{m}(\omega^{i-1})^{m-1}), \\
  \text{if} \ i+j=2 \ (mod \ m) \\ &=&0 \quad \quad  \text{otherwise}. \end{eqnarray*}
\end{theorem}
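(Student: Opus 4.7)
The bilinearity of $f_u$ is immediate from $f_u(x,y)=xC_uy^T$, since matrix multiplication is linear in each argument; no further work is needed for the first clause. The substance of the theorem lies in the explicit formula for $\langle s_i,s_j\rangle_u$, and my plan is to diagonalize the circulant $C_u$ on the Fourier basis $\{s_j\}$ exactly as one does over $\mathbb{C}$, relying on the halidon-ring definition of a primitive $m$-th root to supply the usual orthogonality relation.

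Writing $\eta_j=\omega^{m-j+1}$ so that $s_j=(1,\eta_j,\eta_j^2,\ldots,\eta_j^{m-1})$, I would first verify that $s_j^T$ is a right-eigenvector of $C_u$. A direct expansion of $(C_us_j^T)_i$ using the circulant pattern (entry $(i,k)$ equal to $u_{(k-i+1)\bmod m}$) together with $\eta_j^m=1$ collapses the sum to $\eta_j^{i-1}\mu_j$, yielding $C_us_j^T=\mu_j\,s_j^T$ with
$$\mu_j=u_1+u_2\eta_j+u_3\eta_j^2+\cdots+u_m\eta_j^{m-1}.$$
Once this eigenvalue relation is in hand, $f_u(s_i,s_j)=s_i\,C_u\,s_j^T=\mu_j\,(s_i\cdot s_j^T)$, where the remaining inner product is
$$s_i\cdot s_j^T=\sum_{k=0}^{m-1}(\eta_i\eta_j)^k=\sum_{k=0}^{m-1}\omega^{(2-i-j)k}.$$
By the defining orthogonality identity for a primitive $m$-th root of unity recalled in the introduction, this last sum equals $m$ when $i+j\equiv 2\pmod m$ and $0$ otherwise, which already accounts for the vanishing half of the conclusion.

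It remains only to check that in the non-vanishing case the eigenvalue $\mu_j$ agrees with the expression in the statement. The congruence $j\equiv 2-i\pmod m$ gives $\eta_j=\omega^{m-j+1}=\omega^{i-1}$ (using $\omega^m=1$), which converts $\mu_j$ into $u_1+u_2\omega^{i-1}+u_3(\omega^{i-1})^2+\cdots+u_m(\omega^{i-1})^{m-1}$, as required. The only point that needs real care is a bookkeeping matter rather than a genuine obstacle: one must pin down the circulant convention (row $i$ as the right-shift-by-$(i-1)$ of $(u_1,\ldots,u_m)$, consistent with $\rho(g)=\mathrm{circu}(0,\ldots,0,1)$ from the Maschke proof) so that the eigenvalue is recovered with the powers of $\eta_j$ in the prescribed order, and one must reconcile the two ways the Fourier frequencies are labeled ($\omega^{m-j+1}$ inside $s_j$ versus $\omega^{i-1}$ in the final answer), which are precisely swapped by the support condition $i+j\equiv 2\pmod m$. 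Everything else reduces to the geometric-series identity $(\eta-1)\sum_{k=0}^{m-1}\eta^k=\eta^m-1=0$ that is built into the halidon-ring hypothesis.
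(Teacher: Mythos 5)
Your proposal is correct, but it is organized differently from the paper's proof. The paper argues by brute force: it first records $\langle e_i,e_j\rangle_u = u_{j-i+1}$ (indices mod $m$) for the standard basis, then expands $\langle s_i,s_j\rangle_u$ as the full double sum over that basis, and regroups terms so that each coefficient $u_t$ is multiplied by the geometric series $1+\omega^{2m-i-j+2}+(\omega^{2m-i-j+2})^2+\cdots+(\omega^{2m-i-j+2})^{m-1}$, which the halidon orthogonality relation evaluates to $m$ or $0$ according as $i+j\equiv 2 \pmod m$ or not. You instead factor the computation through the spectral structure of the circulant: you first establish the eigenvalue relation $C_u s_j^T=\mu_j s_j^T$ with $\mu_j=u_1+u_2\eta_j+\cdots+u_m\eta_j^{m-1}$, $\eta_j=\omega^{m-j+1}$ (a step nowhere present in the paper's proof of this theorem), after which $\langle s_i,s_j\rangle_u=\mu_j\,(s_i\cdot s_j^T)$ and a single orthogonality sum $\sum_{k=0}^{m-1}\omega^{(2-i-j)k}$ finishes the vanishing dichotomy; the identification $\eta_j=\omega^{i-1}$ on the support $i+j\equiv 2\pmod m$ then recovers the stated eigenvalue expression. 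Both arguments rest on exactly the same orthogonality identity from the definition of a primitive $m$-th root in a halidon ring, so neither needs more hypotheses than the other; what your route buys is a shorter, less error-prone bookkeeping (one regrouping is replaced by one eigenvector check) and a conceptual link to the factorization $C_u=\frac{1}{m}\Phi\Lambda_u\Phi^{*}$ that the paper itself invokes later when proving that the circulants form an $R$-algebra, whereas the paper's direct expansion has the mild virtue of never leaving the bilinear-form formalism. Your care about the circulant row convention and the swap of frequency labels $\omega^{m-j+1}$ versus $\omega^{i-1}$ is exactly the right place to be careful, and your resolution of it is sound.
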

\begin{proof}
It is clear that $f_{u}$ is a bilinear form.   \begin{eqnarray*}<e_{i},e_{j}>_{u}=e_{i}C_{u}e_{j}^{T}&=&u_{j-i+1} \quad \text{if}\quad i\leq j \\ &=&u_{m+j-i+1} \quad  \text{if} \quad i >j . \end{eqnarray*}$ <s_{i},s_{j}>_{u}=<e_{1}+\omega^{m-i+1}e_{2}+(\omega^{m-i+1})^{2}e_{3}+...+(\omega^{m-i+1})^{m-1})e_{m}, \\ e_{1}+\omega^{m-j+1}e_{2}+(\omega^{m-j+1})^{2}e_{3}+...+(\omega^{m-j+1})^{m-1})e_{m}>_{u} \\ = (u_{1}+\omega^{m-j+1}u_{2}+(\omega^{m-j+1})^{2}u_{3}+...+(\omega^{m-j+1})^{m-1})u_{m})\\+
(\omega^{m-i+1})(u_{m}+\omega^{m-j+1}u_{1}+(\omega^{m-j+1})^{2}u_{2}+...+(\omega^{m-j+1})^{m-1})u_{m-1})\\
+(\omega^{m-i+1})^{2}(u_{m-1}+\omega^{m-j+1}u_{m}+(\omega^{m-j+1})^{2}u_{1}+...+(\omega^{m-j+1})^{m-2})u_{m-1}) \\ +...
+(\omega^{m-i+1})^{m-1}(u_{2}+\omega^{m-j+1}u_{3}+(\omega^{m-j+1})^{2}u_{4}+...+(\omega^{m-j+1})^{m-1})u_{1})\\ \\
=u_{1}(1+\omega^{2m-i-j+2})+(\omega^{2m-i-j+2}))^{2}+...+(\omega^{2m-i-j+2}))^{m-1})\\ +u_{2}\omega^{m-i+1}(1+\omega^{2m-i-j+2})+(\omega^{2m-i-j+2}))^{2}+...+(\omega^{2m-i-j+2}))^{m-1})\\ u_{3}(\omega^{m-i+1})^{2}(1+\omega^{2m-i-j+2})+(\omega^{2m-i-j+2}))^{2}+...+(\omega^{2m-i-j+2}))^{m-1})\\ +...+
u_{m}(\omega^{m-i+1})^{m-1}(1+\omega^{2m-i-j+2})+(\omega^{2m-i-j+2}))^{2}+...+(\omega^{2m-i-j+2}))^{m-1}) \\ \\
=(1+\omega^{2m-i-j+2}+(\omega^{2m-i-j+2}))^{2}+...+(\omega^{2m-i-j+2}))^{m-1})\\
(u_{1}+u_{2}\omega^{m-j+1}+u_{3}(\omega^{m-j+1})^{2}+...+u_{m}(\omega^{m-j+1})^{m-1})\\$
\begin{eqnarray*}\omega^{2m-i-j+2}&=&1 \quad \text{if} \quad i+j=2 \ ( mod \ m) \\
    &\neq& 1 \quad \text{otherwise}\end{eqnarray*}
 \begin{eqnarray*}  \therefore  <s_{i},s_{j}>_{u}&=&m(u_{1}+u_{2}\omega^{i-1}+u_{3}(\omega^{i-1})^{2}+...+u_{m}) \quad   \text{if} \ i+j=2 \ ( mod \ m) \\ &=&0 \quad \quad  \text{otherwise}. \end{eqnarray*} Hence the proof.
\end{proof}
\begin{corollary}
$<s_{i},s_{j}>_{u}=0$ for all $i,j \in \{1,2,3,,,,m\}$ if and only if $u=0$.
\end{corollary}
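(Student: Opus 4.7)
The forward direction is trivial: if $u=0$, then $C_u$ is the zero matrix, so $f_u$ is identically zero and in particular $\langle s_i,s_j\rangle_u=0$ for all $i,j$.

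For the converse, the plan is to reduce the hypothesis to an invertible linear system in $u_1,\dots,u_m$. First I would, for each fixed $i\in\{1,2,3,\dots,m\}$, choose the unique $j=j(i)\in\{1,2,3,\dots,m\}$ with $i+j\equiv 2\ (\bmod\ m)$; such a $j$ always exists. Applying Theorem \ref{r2} to the pair $(i,j(i))$ gives
\[
m\bigl(u_1+u_2\omega^{i-1}+u_3(\omega^{i-1})^2+\cdots+u_m(\omega^{i-1})^{m-1}\bigr)=0
\]
for every $i$. Because $R$ is a halidon ring with index $m$, the integer $m$ is invertible in $R$, so each of these $m$ equations may be divided by $m$.

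The resulting $m$ equations are exactly the matrix equation $\Phi\, u^T=0$, where $\Phi$ is the Vandermonde-type matrix
\[
\Phi=\bigl(\omega^{(i-1)(k-1)}\bigr)_{1\le i,k\le m}
\]
introduced just before Theorem \ref{r1}. By Theorem \ref{r1}, $\Phi$ is invertible with $\Phi^{-1}=\tfrac{1}{m}\Phi^{*}$; equivalently, the identity $s^T=\Phi^{*}e^T$ together with $e^T=\tfrac{1}{m}\Phi s^T$ shows $\Phi\Phi^{*}=mI$, and $m$ is a unit. Multiplying $\Phi u^T=0$ on the left by $\tfrac{1}{m}\Phi^{*}$ yields $u^T=0$, hence $u=0$, as required.

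There is essentially no obstacle here: the whole argument depends on (a) choosing the one non-vanishing diagonal of pairs $(i,j(i))$ that Theorem \ref{r2} singles out, and (b) invoking the invertibility of $\Phi$ already established in Theorem \ref{r1}. The only point that needs a line of care is verifying that the condition $i+j\equiv 2\ (\bmod\ m)$ really does produce a valid $j$ in $\{1,\dots,m\}$ for every $i$, which is immediate since this congruence determines $j$ uniquely modulo $m$.
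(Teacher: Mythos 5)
Your proposal is correct and follows essentially the same route as the paper: apply Theorem \ref{r2} to the pairs $(i,j)$ with $i+j\equiv 2 \ (\bmod\ m)$, cancel the invertible factor $m$, assemble the resulting $m$ equations into the system $\Phi u^{T}=0$, and conclude $u=0$ from the invertibility of $\Phi$ (via $\tfrac{1}{m}\Phi\Phi^{*}=I$). The only cosmetic difference is that you make explicit the cancellation of $m$ and the existence of the unique $j(i)$ for each $i$, which the paper leaves implicit.
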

\begin{proof}
If $u=0$, there is nothing to prove. \\
By theorem \ref{r2}, $<s_{i},s_{j}>_{u}=0$ for all $i,j$ other than  $i+j=2 \ ( mod  \ m)$. So it is enough to consider $<s_{i},s_{j}>_{u}=0$ for $i+j=2 \ ( mod \ m)$. Since $m$ is invertible in $R$, $<s_{i},s_{j}>_{u}=0$ for $i+j=2 \ ( mod \ m)$ implies \\
\begin{eqnarray*}
u_{1}+u_{2}+u_{3}+...+u_{m}&=&0 \\
u_{1}+u_{2}\omega+u_{3}(\omega)^{2}+...+u_{m}(\omega)^{m-1}&=&0 \\
u_{1}+u_{2}\omega^{2}+u_{3}(\omega^{2})^{2}+...+u_{m}(\omega^{2})^{m-1}&=&0 \\
............................................................ \\
u_{1}+u_{2}\omega^{m-1}+u_{3}(\omega^{m-1})^{2}+...+u_{m}(\omega^{m-1})^{m-1}&=&0 \\
\end{eqnarray*}
This can be put into the matrix form $\Phi u^{T}=0$.
Since $\Phi^{-1}$ exists, $u^{T}=0$ and therefore $u=0$.
\end{proof}
We write $x\perp y$ if $<x,y>_{u}=0$. We define $(R^{m})^{\perp}= \{x\in R^{m} |<x,y>_{u}=0 \ \text{for all} \  y\in R^{m} \}$. We say that $<x,y>_{u}$ is $nondegenerate$ if $(R^{m})^{\perp}=\{ 0 \}$.
\begin{corollary} \label{r3}
$<x,y>_{u}$ is a $nondegenerate$ bilinear form if $<s_{i},s_{j}>_{u}$ $\in U(R)$ for all $i$ and $j$ such that $i+j=2 \ ( mod \ m)$.
\end{corollary}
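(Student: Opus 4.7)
The plan is to prove nondegeneracy by working in the basis $\{s_1,\ldots,s_m\}$ of $R^m$ and exploiting the essentially anti-diagonal Gram matrix that Theorem \ref{r2} provides. Because $f_u$ is bilinear and $\{s_j\}$ spans $R^m$, an element $x \in R^m$ lies in $(R^m)^{\perp}$ if and only if $<x,s_j>_u = 0$ for every $j \in \{1,\ldots,m\}$, so that is what I would check.

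First I would take an arbitrary $x \in (R^m)^{\perp}$ and expand it in the basis as $x = \sum_{i=1}^m c_i s_i$ with $c_i \in R$. For each fixed $j$, let $i(j)$ denote the unique element of $\{1,\ldots,m\}$ with $i(j)+j \equiv 2 \pmod m$; explicitly $i(1)=1$, $i(2)=m$, $i(3)=m-1$, and so on. By Theorem \ref{r2} only the term $i = i(j)$ contributes, giving
\[
0 \;=\; <x,s_j>_u \;=\; \sum_{i=1}^m c_i <s_i,s_j>_u \;=\; c_{i(j)}\,<s_{i(j)},s_j>_u .
\]
The hypothesis says $<s_{i(j)},s_j>_u \in U(R)$, so multiplying through by its inverse in $R$ yields $c_{i(j)} = 0$.

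Finally I would observe that the map $j \mapsto i(j)$ is a bijection of $\{1,\ldots,m\}$ (indeed it is an involution, since $i(i(j)) = j$), so as $j$ ranges over all indices the value $i(j)$ does too. Hence every coefficient $c_i$ vanishes and $x = 0$, proving $(R^m)^{\perp} = \{0\}$. The only mildly delicate point is the bookkeeping of the congruence $i+j \equiv 2 \pmod m$ in order to identify $i(j)$ correctly and confirm that $j \mapsto i(j)$ is a permutation; once this is done the argument is a one-line consequence of Theorem \ref{r2} together with the unit hypothesis, so I do not anticipate a substantive obstacle.
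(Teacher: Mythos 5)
Your proposal is correct and takes essentially the same route as the paper: both expand in the basis $\{s_{i}\}$, use Theorem \ref{r2} to reduce the Gram matrix to its anti-diagonal entries indexed by $i+j=2 \ (mod \ m)$, and cancel the unit values $<s_{i(j)},s_{j}>_{u}$ to force every coefficient of $x$ to vanish. Your only (harmless) variation is testing $x$ directly against the basis vectors $s_{j}$ rather than against an arbitrary $y$ as the paper does, which streamlines the same computation and makes the permutation $j \mapsto i(j)$ explicit where the paper leaves it implicit in its listing of terms.
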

\begin{proof}
 $ <x,y>_{u}$=$\sum_{i,j}x_{i}y_{j}<s_{i},s_{j}>_{u}=\sum_{i+j=2mod(m)}x_{i}y_{j}<s_{i},s_{j}>_{u}$ by \ref{r2}. \\
Therefore $<x,y>_{u}=x_{1}y_{1}<s_{1},s_{1}>_{u}+x_{2}y_{m}<s_{2},s_{m}>_{u}+ \\ x_{3}y_{m-1}<s_{3},s_{m-1}>_{u}+....+ x_{m}y_{2}<s_{m},s_{2}>_{u}$ \\
$<x,y>_{u}= 0 \Longrightarrow x_{1}y_{1}<s_{1},s_{1}>_{u}+x_{2}y_{m}<s_{2},s_{m}>_{u}+ \\ x_{3}y_{m-1}<s_{3},s_{m-1}>_{u}+....+ x_{m}y_{2}<s_{m},s_{2}>_{u}=0$ \\
$\Longrightarrow\left(
                  \begin{array}{ccccc}
                    x_{1}<s_{1},s_{1}>_{u} & x_{2}<s_{2},s_{m}>_{u}  & x_{3}<s_{3},s_{m-1}>_{u}  & ... & x_{m}<s_{m},s_{2}>_{u}  \\
                  \end{array}
                \right)\\
 \left(
   \begin{array}{c}
     y_{1} \\
     y_{m} \\
     y_{m1}\\
     . \\
     y_{2} \\
   \end{array}
 \right)
 =0 $.
Since this is true for all $y=(y_{1},y_{2},y_{3},...y_{m})$, \\
$\left(
                  \begin{array}{ccccc}
                    x_{1}<s_{1},s_{1}>_{u} & x_{2}<s_{2},s_{m}>_{u}  & x_{3}<s_{3},s_{m-1}>_{u}  & ..... & x_{m}<s_{m},s_{2}>_{u} \\
                  \end{array}
                \right)$ \\ =(0,0,..,0) \\
$\Longrightarrow x_{1}<s_{1},s_{1}>_{u}=0, x_{2}<s_{2},s_{m}>_{u}=0, x_{3}<s_{3},s_{m-1}>_{u} =0.. \\ x_{m}<s_{m},s_{2}>_{u}=0$ \\
$ x_{1}=x_{2}=x_{3}=...=x_{m}=0$ only when $<s_{i},s_{j}>_{u}\in U(R)$ for $i+j=2 \ (mod \ m)$. \\
Thus $(R^{m})^{\perp}=\{0\}$ and therefore $<x,y>_{u}$ is a nondegenerate bilinear form.

\end{proof}
\begin{corollary}
Let $<x,y>_{u}$ be a nondegenerate bilinear form. Then $M=(<s_{i},s_{j}>)$ is an invertible matrix of order m.
\end{corollary}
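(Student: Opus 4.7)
The plan is to argue by contraposition: I will assume $M$ fails to be invertible and manufacture a nonzero vector in the orthogonal complement $(R^m)^\perp$, contradicting nondegeneracy.

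The starting point is the very rigid shape of $M = (\langle s_i, s_j\rangle_u)$ dictated by Theorem \ref{r2}: the entry at position $(i,j)$ vanishes unless $i+j \equiv 2 \pmod{m}$, and for each row index $i$ this congruence singles out a unique column $j_i$ (namely $j_1 = 1$ and $j_i = m+2-i$ for $i \geq 2$). Thus, after permuting its columns by the involution $i \leftrightarrow j_i$, $M$ becomes a diagonal matrix with diagonal entries $M_{i,j_i} = m\mu_i$, where $\mu_i = u_1 + u_2\omega^{i-1} + \cdots + u_m(\omega^{i-1})^{m-1}$. Because $m$ is invertible in $R$, invertibility of $M$ is equivalent to each $\mu_i$ being a unit in $R$.

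Now suppose $M$ is not invertible, so some $\mu_{i_0}$ fails to lie in $U(R)$. By Proposition \ref{rd15}, every non-unit of the finite commutative halidon ring $R$ lies in $ZD(R)$, so I can pick $\alpha \in R \setminus \{0\}$ satisfying $\alpha \mu_{i_0} = 0$ (take $\alpha = 1$ in case $\mu_{i_0} = 0$). Set $x := \alpha s_{i_0}$. Since $\{s_1, \ldots, s_m\}$ is an $R$-basis of $R^m$ and $\alpha \neq 0$, the vector $x$ is nonzero. For an arbitrary $y = \sum_k y_k s_k \in R^m$, bilinearity combined with Theorem \ref{r2} collapses $\langle x, y\rangle_u$ to a single term:
\begin{equation*}
\langle x, y\rangle_u \;=\; \alpha \sum_{k=1}^{m} y_k \langle s_{i_0}, s_k\rangle_u \;=\; \alpha\, y_{j_{i_0}}\, m\mu_{i_0} \;=\; m\, y_{j_{i_0}} (\alpha \mu_{i_0}) \;=\; 0.
\end{equation*}
Hence $x \in (R^m)^\perp \setminus \{0\}$, contradicting the nondegeneracy hypothesis.

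The main obstacle is precisely the step that turns a non-unit into a nonzero annihilator, and it is the sole place where finiteness of $R$ enters, via Proposition \ref{rd15}. Without that hypothesis, nondegeneracy only forces $\det M$ to be a non-zero-divisor rather than a unit, and one would have to pass to a quotient by a maximal ideal containing $\mu_{i_0}$ and repeat the basis-element argument over the resulting residue field to recover the invertibility statement.
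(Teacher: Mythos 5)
Your proof is correct, but it runs in the opposite logical direction from the paper's. The paper argues directly: using Theorem \ref{r2} it writes $M$ in its anti-diagonal shape, observes that $\det M = \pm \langle s_{1},s_{1}\rangle_{u}\langle s_{2},s_{m}\rangle_{u}\cdots\langle s_{m},s_{2}\rangle_{u}$, cites Corollary \ref{r3} to conclude this determinant lies in $U(R)$, and then exhibits $M^{-1}$ explicitly as the anti-diagonal matrix whose entries are the inverses $\langle s_{i},s_{j}\rangle_{u}^{-1}$. Strictly speaking, Corollary \ref{r3} as stated gives only the implication ``entries in $U(R)$ $\Rightarrow$ nondegenerate,'' so the paper's proof is silently using its converse. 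That converse is exactly what your contrapositive supplies: if some $\mu_{i_{0}}$ is not a unit, then in a \emph{finite} commutative halidon ring Proposition \ref{rd15} makes it a zero divisor, its nonzero annihilator $\alpha$ yields the nonzero radical vector $\alpha s_{i_{0}}$, and bilinearity plus Theorem \ref{r2} collapses $\langle \alpha s_{i_{0}}, y\rangle_{u}$ to $m\,y_{j_{i_{0}}}(\alpha\mu_{i_{0}})=0$, contradicting nondegeneracy. So your argument is self-contained where the paper's has a small citation gap, and it isolates precisely where finiteness enters; what you lose relative to the paper is the explicit formula for $M^{-1}$, which the direct approach produces for free. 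One quibble with your closing aside: over an infinite halidon ring the corollary genuinely fails (take a domain such as $\mathbb{Z}[i,\tfrac{1}{2}]$ with $m=4$, $\omega=i$, $u=(3,0,0,0)$: the form $\langle x,y\rangle_{u}=3xy^{T}$ is nondegenerate, yet every $\langle s_{i},s_{j}\rangle_{u}=12$ is a non-unit), so passing to a residue field cannot ``recover the invertibility statement''; it only transfers nondegeneracy to the quotient, and the correct conclusion in that generality is the one you already identified, namely that $\det M$ is a non-zero-divisor rather than a unit.
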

\begin{proof}
By theorem \ref{r2}, the matrix M can be written as \\ $M=\left(
                                                        \begin{array}{cccccc}
                                                          <s_{1},s_{1}>_{u} & 0 & 0 & ...&0 & 0 \\
                                                          0 & 0 & 0 & ... & 0 & <s_{2},s_{m}>_{u} \\
                                                          0 & 0 & 0 & ... & <s_{3},s_{m-1}>_{u}& 0 \\
                                                          ... & ... & ... & ... & ... &...\\
                                                          0 & <s_{m},s_{2}>_{u} & 0 & ... & 0 &0\\
                                                        \end{array}
                                                      \right)$ \\
Clearly $D=det \ M = \pm <s_{1},s_{1}>_{u}<s_{2},s_{m}>_{u}<s_{2},s_{m-1}>_{u}.....<s_{m},s_{2}>_{u}$ and the sign is depending on $m$. By corollary \ref{r3}, $D \in U(R)$. So $M^{-1}$ exists and $M^{-1}$ is given by \\
$M^{-1}=\left(
                                                        \begin{array}{cccccc}
                                                          <s_{1},s_{1}>_{u}^{-1} & 0 & 0 & ...&0 & 0 \\
                                                          0 & 0 & 0 & ... & 0 & <s_{m},s_{2}>_{u}^{-1} \\
                                                          0 & 0 & 0 & ... & <s_{m-1},s_{3}>_{u}^{-1}& 0 \\
                                                          ... & ... & ... & ... & ... &...\\
                                                          0 & <s_{2},s_{m}>_{u}^{-1} & 0 & ... & 0 &0\\
                                                        \end{array}
                                                      \right)$.
\end{proof}
\begin{corollary}
Let $R=Z_{n}$ be the ring integers modulo $n$. It is a halidon ring with maximum index $m_{max}=\psi(n)$; where $\psi(n)$ is the halidon function. Then the  number of nondegerate bilinear forms $<x,y>_{u}$ is $\phi(n)^{\psi(n)}$.
\end{corollary}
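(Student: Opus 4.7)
The plan is to translate the nondegeneracy requirement into a condition on the Higman coordinates $\lambda_i$ of $u$, and then to count using the bijection $u \mapsto (\lambda_1,\ldots,\lambda_m)$ supplied by the isomorphism $RG \cong R^m$.

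First I will invoke Corollary \ref{r3}: the form $<x,y>_u$ is nondegenerate exactly when $<s_i,s_j>_u \in U(R)$ for every pair $(i,j)$ with $i+j \equiv 2 \pmod m$. By Theorem \ref{r2}, each such pairing equals $m\lambda_i$, where
$$\lambda_i = u_1 + u_2\,\omega^{i-1} + u_3\,(\omega^{i-1})^2 + \cdots + u_m\,(\omega^{i-1})^{m-1}.$$
Since $R$ is a halidon ring, $m \in U(R)$, so the requirement $m\lambda_i \in U(R)$ reduces to $\lambda_i \in U(R)$. As $i$ runs through $\{1,\ldots,m\}$, its partner $j \equiv 2-i \pmod m$ also runs through $\{1,\ldots,m\}$, so nondegeneracy of $<\cdot,\cdot>_u$ is equivalent to the clean single statement that $\lambda_i \in U(R)$ for every $i=1,\ldots,m$.

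Next I will invoke the Higman isomorphism $\rho: RG \to R^m$ of Theorem \ref{rd10}, recalled at the start of this section, which carries $\sum u_i g_i$ to $(\lambda_1,\ldots,\lambda_m)$. Being an $R$-algebra isomorphism, it is in particular a bijection of sets, so the number of $u \in R^m$ with every $\lambda_i$ a unit equals $|U(R)|^m$. For $R = \mathbb{Z}_n$ we have $|U(R)| = \phi(n)$, and by hypothesis $m = m_{\max} = \psi(n)$; multiplying gives the claimed count $\phi(n)^{\psi(n)}$. I will also record in one line that distinct $u$ yield distinct forms $f_u$, since $u$ is recoverable as the first row of the circulant matrix $C_u$, so the count really does enumerate forms and not just parameters.

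The only step that takes genuine thought is the middle one: matching the $m$ nondegeneracy conditions indexed by pairs $(i,j)$ with $i+j \equiv 2 \pmod m$ against the $m$ coordinates of $\rho$, so that the unit condition on the pairings is precisely $(\lambda_1,\ldots,\lambda_m) \in U(R)^m$. Once this identification is made, the counting is automatic from $|U(\mathbb{Z}_n)| = \phi(n)$ and the choice $m = \psi(n)$, and no further computation is needed.
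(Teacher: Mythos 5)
Your proposal is correct and takes essentially the same route as the paper: the paper's own (two-line) proof likewise reduces nondegeneracy via Corollary \ref{r3} to the condition that the pairings $<s_{i},s_{j}>_{u}$ with $i+j\equiv 2 \pmod m$ be units and then counts $\phi(n)^{\psi(n)}$, while you merely make explicit what it leaves implicit, namely that $<s_{i},s_{j}>_{u}=m\lambda_{i}$ with $m$ invertible and that the invertible Vandermonde/DFT map $u\mapsto(\lambda_{1},\dots,\lambda_{m})$ is a bijection of $R^{m}$, so the count is $|U(R)|^{m}=\phi(n)^{\psi(n)}$. The single caveat applies equally to both arguments: Corollary \ref{r3} is stated only as a sufficiency, so treating it as an equivalence (the converse, that a non-unit pairing forces degeneracy, does hold in the finite ring $\mathbb{Z}_{n}$ but is proved in neither place) is tacitly assumed in order for the count to be exact rather than a lower bound.
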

\begin{proof}
By \ref{r3}, $<x,y>_{u}$ is nondegenrate if and only if $<s_{i},s_{j}>_{u}$ $\in U(R)$ for all $i$ and $j$ such that $i+j=2 \ ( mod \ m)$. Here $|U(R)|=\phi(n)$ and $m=\psi(n)$. Therefore $|<x,y>_{u}|$=$\phi(n)^{\psi(n)}$.
\end{proof}

\begin{theorem}
Let $C=\{ C_{u}| u=(u_{1},u_{2},u_{3},...,u_{m})\in R^{m}\}$ and let G be as in theorem \ref{r1}. Then $C$ is an R-algebra.
\end{theorem}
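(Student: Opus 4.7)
The plan is to exhibit $C$ as an $R$-subalgebra of the matrix algebra $M_{m}(R)$ by identifying it with the image of an $R$-algebra homomorphism out of the group ring $RG$. Specifically, in the proof of Theorem \ref{rd11} the regular representation $\rho$ sends the generator $g$ to the circulant $\rho(g)=circ(0,\ldots,0,1)$, so $\rho(g_{i})=\rho(g)^{i-1}$ is a circulant for every $i$. I plan to define $\Psi:RG\to M_{m}(R)$ by $\Psi\left(\sum_{i=1}^{m}\alpha_{i}g_{i}\right)=\sum_{i=1}^{m}\alpha_{i}\rho(g_{i})$ and verify that $\Psi(RG)=C$ as sets, at which point the $R$-algebra structure transported from $RG$ will land on $C$.

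First I would dispose of the $R$-module axioms, which are immediate from $C_{u}+C_{v}=C_{u+v}$, $rC_{u}=C_{ru}$, and the observation that the identity matrix equals $C_{(1,0,\ldots,0)}\in C$, so $C$ is already an $R$-submodule of $M_{m}(R)$ containing $1$. Next I would show $\Psi(RG)=C$. The key observation is that each power $\rho(g)^{k}$ is the circulant whose first row is the $k$-fold cyclic shift of $(0,\ldots,0,1)$, so the family $\{\rho(g_{i})\}_{i=1}^{m}$ consists of $m$ circulants whose first rows form (up to reordering) the standard basis of $R^{m}$. Consequently every $R$-linear combination $\sum\alpha_{i}\rho(g_{i})$ is a circulant, and conversely any circulant $C_{u}=circ(u_{1},\ldots,u_{m})$ is recovered as $\sum\alpha_{i}\rho(g_{i})$ for a uniquely determined tuple $(\alpha_{i})$, reading off the first row of the matrix.

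The main step is to verify that $\Psi$ preserves multiplication, which is the only nontrivial closure property for $C$. This is handled cleanly by the fact that $\rho$ is a group homomorphism into $GL(m,R)$: on the basis we have $\rho(g_{i}g_{j})=\rho(g_{i})\rho(g_{j})$, and extending $R$-bilinearly gives $\Psi(xy)=\Psi(x)\Psi(y)$ for all $x,y\in RG$. In particular $C_{u}\cdot C_{v}\in C$, which is precisely the classical fact that the product of two circulants is a circulant; crucially, in the present setting this is obtained for free from the representation-theoretic definition rather than by a direct combinatorial calculation on matrix entries.

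Combining these steps, $\Psi$ is an $R$-algebra homomorphism with image $C$, so $C$ is an $R$-subalgebra of $M_{m}(R)$; since the $\rho(g_{i})$ are visibly $R$-linearly independent (their first rows are distinct standard basis vectors), $\Psi$ is in fact an isomorphism, yielding $C\cong RG\cong R^{m}$ by Theorem \ref{rd10}. The hard part, if any, is bookkeeping: matching the convention for circulant indices to the indexing of the $g_{i}$ used earlier in the paper, so that the bijection between the coefficient tuple $(\alpha_{1},\ldots,\alpha_{m})$ and the first-row tuple $(u_{1},\ldots,u_{m})$ is written down correctly. Once that bookkeeping is settled, the rest is mechanical.
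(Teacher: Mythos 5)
Your proof is correct, but it takes a genuinely different route from the paper's. The paper proves closure under multiplication by diagonalization: using the halidon structure it writes $C_{u}=\frac{1}{m}\Phi\Lambda_{u}\Phi^{*}$ with $\Lambda_{u}$ diagonal and $\frac{1}{m}\Phi\Phi^{*}=I$ (the Vandermonde/DFT matrix built from $\omega$), so that $C_{u}C_{v}=\frac{1}{m}\Phi\Lambda_{u}\Lambda_{v}\Phi^{*}=C_{uv}$, and then declares $h(u)=C_{u}$ an algebra isomorphism onto $C$, giving $R^{m}\cong C$. You instead realize $C$ as the image of the $R$-linear extension $\Psi$ of the regular representation $\rho$, so that multiplicativity is inherited from $\rho$ being a group homomorphism and $C$ is an $R$-subalgebra of $M_{m}(R)$ simply as the image of an algebra map; the identification of $C$ with the full set of circulants via first rows is the standard (and correct) bookkeeping. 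Your argument is more elementary and strictly more general: nothing in it uses that $R$ is a halidon ring --- over any commutative ring with identity the circulants are exactly the image of $RG$ under the regular representation, hence an $R$-algebra --- and the halidon hypothesis enters only when you invoke Theorem \ref{rd10} to upgrade $C\cong RG$ to $C\cong R^{m}$, which is the same isomorphism the paper leans on. What the paper's computation buys is the explicit spectral form $C_{u}=\frac{1}{m}\Phi\Lambda_{u}\Phi^{*}$, which it reuses in the adjacent material on bilinear forms and the discrete Fourier transform; what your route buys is that closure under matrix multiplication comes for free, with no entrywise computation and no need for $m$ to be invertible.
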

\begin{proof}
Let $u=(u_{1},u_{2},u_{3},...,u_{m}), v=(v_{1},v_{2},v_{3},...,v_{m})\in R^{m}$ be any two elements in $R$. Since $RG\cong R^{m}$, we can identify the elements $u$ and $v$ as
$\sum_{i=1}^{m}\alpha_{i}g_{i}$ and  $\sum_{i=1}^{m}\beta_{i}g_{i}$ respectively, where $$u_{i}=\sum_{r=1}^{m}\alpha_{m-r+2}(\omega^{i-1})^{r-1}.$$ and $$v_{i}=\sum_{r=1}^{m}\beta_{m-r+2}(\omega^{i-1})^{r-1}.$$ Since $R$ is a halidon ring with index $m$ and $\omega$ is a primitive $m^{th}$ root of unity, the circulant matrix $C_{u}$ can be written as $$C_{u}=\frac{1}{m}\Phi \Lambda_{u}\Phi^{*}, $$ where $$\Lambda_{u}= diag(\lambda_{1},\lambda_{2},\lambda_{3}....,\lambda_{m})$$ such that $$\lambda_{i}=\sum_{r=1}^{m}u_{i}(\omega^{(i-1)})^{(r-1)}$$ and $\Phi^{*}$ is the conjugate transposed of $\Phi$  \cite{pjd}and $\frac{1}{m}\Phi\Phi^{*}=I=\frac{1}{m}\Phi^{*}\Phi$. Clearly $\Lambda_{u}\Lambda_{v}=\Lambda_{uv}.$
\begin{eqnarray*}  \therefore C_{uv}&=& \frac{1}{m}\Phi \Lambda_{uv}\Phi^{*} \\ &=& \frac{1}{m}\Phi \Lambda_{u}\Lambda_{v}\Phi^{*} \\
&=& \frac{1}{m}\Phi \Lambda_{u}\frac{1}{m}\Phi^{*}\Phi\Lambda_{v}\Phi^{*} \\ &=& (\frac{1}{m}\Phi \Lambda_{u}\Phi^{*})(\frac{1}{m}\Phi\Lambda_{v}\Phi^{*}) \\ &=& C_{u}C_{v} \\
\end{eqnarray*}

We define $h:R^{m}\rightarrow C$ by $h(u)=C_{u}$; which is clearly an algebra isomorphism. $\therefore R^{m} \cong C. $ And hence the theorem.
\end{proof}
\begin{theorem}
Let $B=\{ <x,y>_{u}|<x,y>_{u}=xC_{u}y^{T}, \text{for each} \ u\in R^{m}, x,y \in R^{m} \}$. Then $B$ is an $R$-module.
\end{theorem}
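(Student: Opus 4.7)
The plan is to transport the $R$-module structure of $R^m$ to $B$ through the evaluation map $\Psi : R^m \to B$ given by $\Psi(u) = \langle \cdot,\cdot \rangle_{u}$. First I would verify that $\Psi$ is a bijection. Surjectivity is built into the definition of $B$. For injectivity, if $\langle x,y \rangle_{u} = \langle x,y \rangle_{v}$ for all $x,y \in R^m$, then $\langle x,y \rangle_{u-v} = 0$ for all $x,y$ by bilinearity in $u$, and the corollary immediately following Theorem~\ref{r2} forces $u-v = 0$.

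Next I would observe that the circulant construction $u \mapsto C_u$ is itself $R$-linear: the entries of $C_u$ are cyclic shifts of the components of $u$, so $C_{u+v} = C_u + C_v$ and $C_{ru} = r C_u$ for all $u,v \in R^m$ and $r \in R$. Consequently
$$\langle x,y \rangle_{u+v} = x C_{u+v} y^T = x C_u y^T + x C_v y^T = \langle x,y \rangle_{u} + \langle x,y \rangle_{v},$$
and similarly $\langle x,y \rangle_{ru} = r \langle x,y \rangle_{u}$. This licences the definitions
$$\langle \cdot,\cdot \rangle_{u} + \langle \cdot,\cdot \rangle_{v} := \langle \cdot,\cdot \rangle_{u+v}, \qquad r \cdot \langle \cdot,\cdot \rangle_{u} := \langle \cdot,\cdot \rangle_{ru},$$
and these definitions are well-defined precisely because $\Psi$ is injective (different labels $u$ really do give different elements of $B$).

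Finally, the $R$-module axioms for $B$, namely associativity and commutativity of addition, existence of a zero element $\langle \cdot,\cdot \rangle_{0}$, existence of additive inverses $\langle \cdot,\cdot \rangle_{-u}$, distributivity over ring and module addition, and compatibility with the unit of $R$, all transfer automatically from the known $R$-module $R^m$ via $\Psi$, which has now been promoted to an $R$-module isomorphism by construction. I do not anticipate any serious obstacle; the only delicate point in the argument is the well-definedness of the two operations on $B$, and that rests squarely on the nondegeneracy corollary following Theorem~\ref{r2}. Once injectivity of $\Psi$ is in hand, the rest is transport of structure.
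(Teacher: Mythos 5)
Your proof is correct, and its computational core --- the identities $\langle x,y\rangle_{u+v}=\langle x,y\rangle_{u}+\langle x,y\rangle_{v}$ and $\langle x,y\rangle_{\alpha u}=\alpha\langle x,y\rangle_{u}$, coming from the $R$-linearity of $u\mapsto C_{u}$ --- is in fact the paper's \emph{entire} proof. The paper stops there: it implicitly regards $B$ as a subset of the $R$-module of all maps $R^{m}\times R^{m}\rightarrow R$ under pointwise operations, and the two identities show $B$ is closed under addition and scalar multiplication, hence a submodule. You go further, transporting the structure of $R^{m}$ through the map $\Psi(u)=\langle\cdot,\cdot\rangle_{u}$, which requires the extra step of injectivity via the nondegeneracy corollary (that $\langle s_{i},s_{j}\rangle_{u}=0$ for all $i,j$ forces $u=0$); your use of that corollary is sound, and the payoff is a strictly stronger conclusion than the theorem asks for, namely $B\cong R^{m}$ as $R$-modules rather than merely that $B$ is a module. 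One small correction: well-definedness of your operations does not actually rest on injectivity, contrary to what you assert. Since you have already shown the label-based operations agree with the pointwise ones ($\Psi(u+v)$ equals $\Psi(u)+\Psi(v)$ as functions), the sum depends only on the functions $\Psi(u),\Psi(v)$ and not on the chosen labels, so the operations would be well defined even if $\Psi$ were not injective. Injectivity is needed only for your isomorphism claim, not for the module structure --- which is precisely why the paper's two-line argument can dispense with it altogether.
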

\begin{proof}
Let $\alpha \in R$ be any element in R. Then $<x,y>_{u+v}=<x,y>_{u}+ \\ <x,y>_{v}$ and $<x,y>_{\alpha u}= \alpha<x,y>_{u}$. Therefore $B$ is an $R$-module.
\end{proof}

 \section{Discrete Fourier Transforms}
In this section, we deal with the ring of polynomials over a halidon ring which has an application in Discrete Fourier Transforms \cite{jj}. Throughout this section, let $R$ be a finite commutative halidon ring with index $m$ and $R[x]$ denotes the ring of polynomials degree less than $m$ over $R$.
\begin{definition} \cite{jj}
Let $\omega \in R$ be a primitive $m^{th}$ root of unity in $R$ and let $f(x)=\sum \limits_{j=0}^{m-1} f_{j}x^{j} \in R[x]$ with its coefficients vector $(f_{0},f_{1},f_{2},....,f_{m-1}) \in R^{m}$. The \textbf{Discrete Fourier Transform} (DFT) is a map $$ DFT_{\omega}: R[x]\rightarrow R^{m}$$ defined by $$DFT_{\omega}(f(x))=(f_{0}(1),f_{1}(\omega),f_{2}(\omega^{2}),....,f_{m-1}(\omega^{m-1})).$$
\end{definition}\noindent
\begin{remark}
Clearly $DFT_{\omega}$ is a $R$-linear map as $DFT_{\omega}(af(x)+bg(x))=aDFT_{\omega}(f(x))+bDFT_{\omega}(g(x))$ for all $ a, b \in R$. Also, if $R= \mathbb{C}$, the field of complex numbers, then $\omega=cos(\frac{2 \pi}{m})+i sin(\frac{2 \pi}{m})=e^{i\frac{2 \pi}{m}}$ and the Fourier series will become the ordinary series of sin and cos functions.
\end{remark}
\begin{definition} \cite{jj}
The \textbf{convolution} of $f(x)=\sum \limits_{j=0}^{m-1} f_{j}x^{j}$ and $g(x)=\sum \limits_{k=0}^{m-1} g_{k}x^{k}$ in $R[x]$ is defined by $h(x)=f(x)*g(x)= \sum \limits_{l=0}^{m-1} h_{l}x^{l} \in R[x]$ where  \quad $h_{l}=\sum \limits_{j+k=l \ mod \ m} f_{j}g_{k}=\sum \limits_{j=0}^{m-1} f_{j}g_{l-j}$ for $0 \leq l < m$.

\end{definition}
The notion of convolution is equivalent to polynomial multiples in the ring $R[x]/<x^{m}-1>$. The $l^{th}$ coefficient of the product $f(x)g(x)$ is $\sum \limits_{j+k=l \ mod \ m} f_{j}g_{k}$ and hence $$f(x)*g(x)= f(x)g(x) \ mod (x^{m}-1).$$
\begin{proposition} \cite{jj}
For polynomials $f(x),g(x) \in R[x],$ $DFT_{\omega}(f(x)*g(x))=DFT_{\omega}(f(x)).DFT_{\omega}(g(x)),$ where . denotes the pointwise multiplication of vectors.
\end{proposition}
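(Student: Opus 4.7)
The plan is to reduce the claim to the elementary fact that polynomial evaluation at any fixed point is a ring homomorphism, exploiting that each evaluation point $\omega^i$ is a root of $x^m-1$.

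First I would fix the definitions concretely: write $h(x)=f(x)*g(x)=\sum_{l=0}^{m-1}h_l x^l$ where $h_l=\sum_{j+k\equiv l\ (\mathrm{mod}\ m)}f_j g_k$. The key preliminary step, already spelled out in the text just before the proposition, is the identification $f(x)*g(x)\equiv f(x)g(x)\pmod{x^m-1}$ in $R[x]$. From this I would produce an explicit quotient: there exists $q(x)\in R[x]$ such that
\[
f(x)g(x)=q(x)(x^m-1)+h(x).
\]

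Next I would substitute $x=\omega^i$ for each $i=0,1,\dots,m-1$. Because $\omega$ is a primitive $m$-th root of unity in $R$, we have $(\omega^i)^m=(\omega^m)^i=1$, so $(\omega^i)^m-1=0$. The quotient term therefore vanishes under every such substitution, yielding
\[
f(\omega^i)\,g(\omega^i)=h(\omega^i)\qquad (i=0,1,\dots,m-1).
\]
The $i$-th component of $DFT_\omega(h)$ is by definition $h(\omega^i)$, while the $i$-th component of $DFT_\omega(f)\cdot DFT_\omega(g)$ (pointwise product) is $f(\omega^i)\,g(\omega^i)$. Componentwise equality in $R^m$ then gives the claim.

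No step is really an obstacle here: the only subtlety worth flagging is that one must not use any cancellation or field-like hypothesis on $R$, only the ring identity $(\omega^i)^m=1$, which is part of the halidon hypothesis. I would therefore take care to carry out the substitution inside the polynomial identity in $R[x]$ itself (so that ordinary ring arithmetic in $R$ suffices), rather than appeal to any inversion or root-separation property of $\omega$. This keeps the proof valid for any halidon ring, not just for a field.
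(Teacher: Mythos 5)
Your proposal is correct and follows essentially the same route as the paper: both write $f(x)g(x)=q(x)(x^m-1)+h(x)$ and substitute $x=\omega^i$, where the factor $(\omega^i)^m-1=0$ kills the quotient term, giving $h(\omega^i)=f(\omega^i)g(\omega^i)$ componentwise. Your added remark that no cancellation or field hypothesis is needed --- only the ring identity $(\omega^i)^m=1$ --- is a sound clarification, but the underlying argument is identical to the paper's.
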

\begin{proof}
$f(x)*g(x)= f(x)g(x) +q(x)(x^{m}-1)$ for some $q(x) \in R[x]$. \\
Replace $x$ by $\omega^{j}$, we get \\
$$f(\omega^{j})*g(\omega^{j})= f(\omega^{j})g(\omega^{j}) +0.$$
$$\therefore \quad \quad \quad DFT_{\omega}(f(x)*g(x))=DFT_{\omega}(f(x)).DFT_{\omega}(g(x)). $$
\end{proof}
\begin{theorem}
For a polynomial $f(x) \in R[x],$ $DFT_{\omega}^{-1}(f(x))= \frac{1}{m}DFT_{\omega^{-1}}(f(x)).$
\end{theorem}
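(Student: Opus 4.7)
The plan is to reinterpret the identity as a matrix identity. Since a polynomial $f(x) = \sum_{j=0}^{m-1} f_j x^j$ is determined by its coefficient vector $(f_0, \ldots, f_{m-1}) \in R^m$, and since $DFT_{\omega}$ is $R$-linear, the map $DFT_\omega$ is represented on this standard basis by the matrix $\Phi_\omega$ with entries $(\Phi_\omega)_{i,j} = \omega^{ij}$ for $0 \le i,j \le m-1$ (this is essentially the matrix $\Phi$ appearing in Theorem \ref{r1}). Under this identification, the claim $DFT_\omega^{-1}(f(x)) = \tfrac{1}{m} DFT_{\omega^{-1}}(f(x))$ becomes the matrix identity
\[
\Phi_\omega^{-1} = \tfrac{1}{m}\,\Phi_{\omega^{-1}},
\]
equivalently, $\Phi_\omega\,\Phi_{\omega^{-1}} = m\,I$. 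Note that $\omega^{-1}$ is again a primitive $m$-th root of unity (as observed in the introduction), so the right-hand side makes sense.

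Next I would compute the $(i,j)$-entry of $\Phi_\omega\,\Phi_{\omega^{-1}}$ directly:
\[
(\Phi_\omega\,\Phi_{\omega^{-1}})_{ij} = \sum_{k=0}^{m-1} \omega^{ik}\,\omega^{-kj} = \sum_{k=0}^{m-1} \omega^{k(i-j)}.
\]
At this point I would invoke the very definition of a primitive $m$-th root of unity used throughout the paper, namely
\[
\sum_{r=0}^{m-1} \omega^{r(i-j)} \;=\; \begin{cases} m, & i \equiv j \pmod m, \\ 0, & i \not\equiv j \pmod m.\end{cases}
\]
Since $0 \le i,j \le m-1$, the congruence $i \equiv j \pmod m$ is equivalent to $i=j$, and hence $(\Phi_\omega\,\Phi_{\omega^{-1}})_{ij} = m\,\delta_{ij}$. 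This yields $\Phi_\omega\,\Phi_{\omega^{-1}} = m\,I$.

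Finally, because $R$ is a halidon ring with index $m$, the scalar $m$ is invertible in $R$; multiplying both sides by $m^{-1}$ produces $\Phi_\omega^{-1} = \tfrac{1}{m}\Phi_{\omega^{-1}}$, which is the desired identity after translating back to the $DFT$ notation. There is no real obstacle here: the crux is purely the orthogonality relation, and that relation is already built into the definition of a primitive $m$-th root of unity in a halidon ring. The only minor point of care is a consistent interpretation of $DFT_\omega^{-1}(f(x))$: one must implicitly identify the output vector of a DFT with the coefficient vector of a polynomial of degree less than $m$, so that the composition $DFT_\omega^{-1} \circ DFT_\omega$ is defined as an endomorphism of $R[x]_{<m} \cong R^m$.
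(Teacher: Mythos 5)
Your proof is correct, and it follows the same overall strategy as the paper --- both arguments identify $DFT_{\omega}$ with the Vandermonde matrix $\Phi_{\omega}=(\omega^{ij})_{0\le i,j\le m-1}$ and reduce the claim to the matrix identity $\Phi_{\omega}^{-1}=\frac{1}{m}\Phi_{\omega^{-1}}$ --- but you diverge at the key step, and in a way worth noting. The paper simply cites the known fact (from Davis's \emph{Circulant Matrices}) that the inverse of the Fourier--Vandermonde matrix is $\frac{1}{m}\phi^{*}$, where $\phi^{*}$ is the conjugate transpose, and then remarks that ``the conjugate of $\omega$ is $\omega^{-1}$.'' That cited result is proved over $\mathbb{C}$, where conjugation has its usual meaning; in an abstract halidon ring there is no conjugation, so the paper's appeal to it is really shorthand for a computation it never displays. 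You supply exactly that computation: $\left(\Phi_{\omega}\Phi_{\omega^{-1}}\right)_{ij}=\sum_{k=0}^{m-1}\omega^{k(i-j)}=m\,\delta_{ij}$, which follows directly from the orthogonality relation that the paper builds into its definition of a primitive $m$-th root of unity (this is the whole point of that nonstandard definition --- it is what makes the relation available despite zero divisors), followed by invertibility of $m$ in $R$. Your version is therefore more self-contained and is the argument that actually makes the theorem rigorous over an arbitrary halidon ring rather than only over $\mathbb{C}$; the paper's version buys brevity by leaning on the classical complex case. Your closing remark about identifying the output vector with a coefficient vector so that $DFT_{\omega}^{-1}\circ DFT_{\omega}$ is an endomorphism of $R[x]_{<m}\cong R^{m}$ is also a real point of care that the paper passes over silently.
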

\begin{proof}
The matrix of the transformation $DFT_{\omega}(f(x))$ is $$[DFT_{\omega}(f(x))]=\phi=\left(
                                                                        \begin{array}{ccccc}
                                                                          1 & 1 & 1 & ..... & 1 \\
                                                                          1 & \omega & \omega^{2} & ..... & \omega^{m-1} \\
                                                                          1 & \omega^{2} &(\omega^{2})^{2} & ..... & (\omega^{2})^{m-1}\\
                                                                          . & . & . & ..... & . \\
                                                                          . & . & . & ..... & . \\
                                                                          . & . & . & ..... & . \\
                                                                          1 & \omega^{m-1} & (\omega^{m-1})^{2} & ..... & (\omega^{m-1})^{m-1}\\
                                                                        \end{array}
                                                                      \right)
$$ The matrix $\phi$ is the well known Vandermonde matrix and its inverse is $\frac{1}{m}\phi^{*}$, where $\phi^{*}$ is the matrix transpose conjugated \cite{pjd}. Since $\phi$ is a square matrix and the conjugate of $\omega$ is $\omega^{-1}$, we have  $DFT_{\omega}^{-1}(f(x))= \frac{1}{m}DFT_{\omega^{-1}}(f(x)).$
\end{proof}
\begin{example} \label{rd6}
We know that $R=Z_{49}$ is a halidon ring with index $m=6$ and $\omega=19$. Also, $\omega^{-1}=\omega^{5}=31$. Let $f(x)=2+x+2x^{2}+3x^{3}+5x^{4}+10x^{5} \in R[x].$ Then $DFT_{\omega}(f(x))$ can be expressed as \newline $ \left(
               \begin{array}{c}
                 F_{0} \\
                 F_{1} \\
                 F_{2} \\
                 F_{3} \\
                 F_{4} \\
                 F_{5} \\
               \end{array}
             \right)$ $=$ $\left(
                         \begin{array}{cccccc}
                           1 & 1 & 1 & 1 &1 & 1 \\
                           1 & \omega & \omega^{2} &\omega^{3} & \omega^{4} & \omega^{5} \\
                           1 & \omega^{2} & \omega^{4} &1& \omega^{2} & \omega^{4} \\
                           1 & \omega^{3} & 1 &\omega^{3} & 1 & \omega^{3} \\
                           1 & \omega^{4} & \omega^{2} &1 & \omega^{4} & \omega^{2} \\
                           1 & \omega^{5} & \omega^{4} &\omega^{3} & \omega^{2} & \omega \\
                         \end{array}
                       \right)$  $ \left(
               \begin{array}{c}
                 f_{0} \\
                 f_{1} \\
                 f_{2} \\
                 f_{3} \\
                 f_{4} \\
                 f_{5} \\
               \end{array}
             \right)$
     $\Rightarrow$
     $ \left(
               \begin{array}{c}
                 F_{0} \\
                 F_{1} \\
                 F_{2} \\
                 F_{3} \\
                 F_{4} \\
                 F_{5} \\
               \end{array}
             \right)$ $=$ $ \left(
               \begin{array}{c}
                23  \\
                24 \\
                32  \\
                 44 \\
                 9 \\
                 27 \\
               \end{array}
             \right)$
       $ \left(
               \begin{array}{c}
                 f_{0} \\
                 f_{1} \\
                 f_{2} \\
                 f_{3} \\
                 f_{4} \\
                 f_{5} \\
               \end{array}
             \right)$ $=6^{-1}$$\left(
                         \begin{array}{cccccc}
                           1 & 1 & 1 & 1 &1 & 1 \\
                           1 & \omega^{5} & \omega^{4} &\omega^{3} & \omega^{2} & \omega \\
                           1 & \omega^{4} & \omega^{2} &1& \omega^{4} & \omega^{2} \\
                           1 & \omega^{3} & 1 &\omega^{3} & 1 & \omega^{3} \\
                           1 & \omega^{2} & \omega^{4} &1 & \omega^{2} & \omega^{4} \\
                           1 & \omega & \omega^{2} &\omega^{3} & \omega^{4} & \omega^{5} \\
                         \end{array}
                       \right)$  $ \left(
               \begin{array}{c}
                 F_{0} \\
                 F_{1} \\
                 F_{2} \\
                 F_{3} \\
                 F_{4} \\
                 F_{5} \\
               \end{array}
             \right)$
             \newline
      $\Rightarrow$     $ \left(
               \begin{array}{c}
                 f_{0} \\
                 f_{1} \\
                 f_{2} \\
                 f_{3} \\
                 f_{4} \\
                 f_{5} \\
               \end{array}
             \right)$ $=41$$\left(
                         \begin{array}{cccccc}
                           1 & 1 & 1 & 1 &1 & 1 \\
                           1 & 31 & 30 &48 & 18 & 19 \\
                           1 & 30 & 18 &1& 30 & 18 \\
                           1 & 48 & 1 &48 & 1 & 48 \\
                           1 & 18 & 30 &1 & 18 & 30 \\
                           1 & 19& 18 &48 & 30 & 31 \\
                         \end{array}
                       \right)$  $ \left(
               \begin{array}{c}
                 F_{0} \\
                 F_{1} \\
                 F_{2} \\
                 F_{3} \\
                 F_{4} \\
                 F_{5} \\
               \end{array}
             \right)$     \newline
If $ \left(
               \begin{array}{c}
                 F_{0} \\
                 F_{1} \\
                 F_{2} \\
                 F_{3} \\
                 F_{4} \\
                 F_{5} \\
               \end{array}
             \right)$     $=$ $ \left(
               \begin{array}{c}
                 23 \\
                 24 \\
                 32 \\
                 44 \\
                 9 \\
                 27 \\
               \end{array}
             \right)$, then a direct calculation gives $ \left(
               \begin{array}{c}
                 f_{0} \\
                 f_{1} \\
                 f_{2} \\
                 f_{3} \\
                 f_{4} \\
                 f_{5} \\
               \end{array}
             \right)$ $=$  $ \left(
               \begin{array}{c}
                2 \\
                1 \\
                 2 \\
                 3 \\
                 5 \\
                 10 \\
               \end{array}
             \right)$ \newline as expected.

\end{example}
The programme-6 and programme-7 will enable us to calculate Discrete Fourier Transform and its inverse. We can cross-check the programmes against example \ref{rd6}. \\

If $R=Z_{100001}$, $m=10$, $\omega=26364$ and $f(x)=1+2x+3x^{2}+4x^{3}+5x^{4}+6x^{5}+7x^{6}+8x^{7}+9x^{8}+x^{9} \in R[x]$, then $ \left( \begin{array}{c}
                 F_{0} \\
                 F_{1} \\
                 F_{2} \\
                 F_{3} \\
                 F_{4} \\
                 F_{5} \\
                 F_{6} \\
                 F_{7} \\
                 F_{8} \\
                 F_{9} \\
               \end{array}
             \right)$     $=$ $ \left(
               \begin{array}{c}
                 46 \\
                 19019 \\
                 3314 \\
                 10082 \\
                 48017 \\
                 4 \\
                80347 \\
                 18172 \\
                 68413 \\
                 52627 \\
               \end{array}
             \right)$. \\
              Also, we can verify the inverse DFT using the above data. \\
\section{Conclusions}
The halidon rings are useful to extend the two famous theorems of Graham Higman(1940) and Maschke(1899). Since Higman's theorem and Maschke's theorem have a wide range of applications in group rings, group algebras and representation theory, there is a big scope of wider applications of halidon rings. The field of complex numbers is an infinite halidon ring with any index $m>0 $. The field of real numbers is an infinite halidon ring with index 2 and the ring integers is a trivial halidon ring with index m=1. Using the field of complex numbers, we can create infinite halidon rings of square matrices with any index $m>0$. This will open new vistas of applications in algebra and number theory. Another area of application is the coding theory on which the author is currently working with.  \\
\textbf{Acknowledgment} I am very much indebted to Prof. M.I.Jinnah, Former Head of Mathematics, University of Kerala, India, for his constructive suggestions and support.

\newpage
\section{Appendix}

\begin{verbatim}
Programme-1 : To check whether Z(n) is a trivial or nontrivial halidon ring
#include <iostream>
#include <cmath>
using namespace std;
int main() {
	cout << "To check whether Z(n) is a trivial or nontrivial halidon ring." << endl;
	unsigned long long int t = 0, n = 1, w = 1, hcf, hcf1,
		d = 1, k = 1, q = 1, p = 1, b=0, c=0, temp = 1;
	cout << "Enter an integer n >0: ";
	cin >> n;
	if (n % 2 == 0) {
		cout << "Z(" << n << ") is a trivial halidon ring." << endl;
	}
		for (w = 1; w < n; ++w) {
		for (int i = 1; i <= n; ++i) {
			if (w % i == 0 && n % i == 0) {
				hcf = i;

		}
		} if (hcf == 1) {
			++t; // cout << "  " << w << "  ";
		}
	}
	for (w = 1; w < n; ++w) {
		for (int i = 1; i <= n; ++i) {
			if (w % i == 0 && n % i == 0) {
				hcf = i;

			}
		}
		if (hcf == 1) {
		for (int k = 1; k <= t; ++k) {
		q = q * w; q = q % n;
		if (q == 1) { if (temp <= k) { temp = k; } break; }
		}
		}
	}
	
	for (w = 2; w < n; ++w) {
		for (int i = 1; i <= n; ++i) {
		if (w % i == 0 && n % i == 0) {
		hcf = i;
			}
		}
		if (hcf == 1) {
		for (int k = 1; k <= temp; ++k) {
		q = q * w; q = q % n; if (q == 1) {
		for (int i = 1; i <= n; ++i) {
		if (k % i == 0 && n % i == 0) {
		hcf1 = i;
		}
		}
		if (hcf1 == 1) {
		for (int j = 1; j < k; ++j)
		{
		if (k%j == 0) {
		d = j;
		for (int l = 1; l <= d; ++l)
		{
		p = (p*w); p = p % n;
		}
		for (int i = 1; i <= n; ++i) {
		if ((p - 1) % i == 0 && n % i == 0) {
		hcf = i;
		}
		}
             if (hcf == 1) {

		p = 1; b = b + 1;
		}
		else p = 1;
		c = c + 1;
     		}
		}
		if (c == b) { cout << "   Z(" << n << ")" <<
							
" is a halidon ring with index m= " << k <<

" and w= " << w << "."<< endl; } {p = 1; c = 0; b = 0; }

		break;
		}
		}
		}
		}
	} return 0;
}

Programme-2: To check whether an element in ZnG; G is a cyclic group of
order m has a multiplicative inverse or not
#include<iostream>
#include<cmath>
using namespace std;
int main() {
	cout << "To check whether an element in Z(n)G;" <<
		"G is a cyclic group of order m" <<
              “has a multiplicative inverse or not” << endl;
	int a[100], b[100], c[100], d[100], e[100], w1[100], m = 1,
		t = 0, x = 1, s = 0, s1 = 0, l = 0, m1 = 1, hcf = 1,
		n = 1, i = 1, k = 0, q = 1, p = 1, r = 1, w = 1;
	cout << "Enter n =" << endl;
	cin >> n;
	cout << "Enter index m =" << endl;
	cin >> m;
	cout << "Enter  m^(-1) =" << endl;
	cin >> m1;
	cout << "Enter primitive m th root w =" << endl;
	cin >> w;
	for (i = 0; i < m; ++i)
    { w1[i] =( (long long int)pow(w, i)) % n;
	cout << "w1[" << i << "]" << w1[i] << endl; }
		for (int i = 1; i < m + 1; ++i) {
			cout << "Enter a["<<i<<"]=" << endl;
			cin >> a[i];
		}
	a[0] = a[m];
		for (int r = 1; r < m + 1; ++r) {
		for (int j = 1; j < m + 1; ++j)
		{
			l = (m - j + 2) % m;
			x = ((j - 1) * (r - 1)) % m;
			k = k + (a[l] * w1[x]) % n; k = k % n;
			// cout << "k=" << k << endl;
		} c[r] = k; cout << "c[" << r << "]=" << c[r] << endl;
		k = 0;
	}
	for (r = 1; r < m+1; ++r) {
		for (int i = 1; i <= n; ++i) {
			if (c[r] % i == 0 && n % i == 0) {
				hcf = i;
			}
		}
		if (hcf == 1) {
			cout << "c[" << r << "] is a unit" << endl;
		}
		else { cout << "c[" << r <<
		"] is a not unit. So there is no multiplicative inverse." <<
			endl; t = 1; }
	}
	for (r = 1; r < m + 1; ++r) {
		for (int i = 1; i <= n; ++i) {
			e[r] = (c[r] * i) % n;
			if (e[r] == 1) {
				b[r] = i;
	cout << " The inverse of c[" << r << "] is " << b[r] << endl;
			}
		}
	}
	b[0] = b[m];
	for (int r = 1; r < m + 1; ++r) {
		for (int j = 1; j < m + 1; ++j)
		{
	l = (m - j + 2) % m;
	x = (m*m-(j - 1) * (r - 1)) % m;  cout << "x= " << x << endl;
	k = k + (m1*b[l] * w1[x]) % n; k = k % n;
       cout << "k=" << k << endl;
	}
d[r] = k; cout << "d[" << r << "]=" << d[r] << endl;
	k = 0;
	}
	if (t == 1) {
		s = m;
	mylabel2:
		cout << a[m - s + 1] << "g^(" << m - s << ") + ";
		s--;
		if (s > 0) goto mylabel2; cout <<
       "has no multiplicative inverse." << endl;
	}
	else {cout << "The inverse of ";
	s = m;
mylabel:
	cout << a[m - s + 1] << "g^(" << m - s << ") + ";
	s--;
	if (s > 0) goto mylabel; cout << "is" << endl;
	s1 = m;
mylabel1:
	cout << d[m - s1 + 1] << "g^(" << m - s1 << ") + ";
	s1--;
	if (s1 > 0) goto mylabel1; cout << "." << endl; }
	return 0;
	}

Programme-3: To find idempotents, involutions and units in Zn
 #include<iostream>
#include<cmath>
using namespace std;
int main() {
cout << "To find idempotents, involutions and units in Z(n)." << endl;
	long long int i, j=0,n, k=0, x, y;
	cout << "Enter n=" << endl;
	cin >> n;
	cout << "The involutions are ";
	for (i = 1; i < n + 1; ++i) {
		x = (i * i) % n;
		if (x == 1) { cout << i << "  "; }
	} cout << "." << endl;
	cout << "The idempotents are ";
		for (i = 0; i < n + 1; ++i) {
			x = (i * i) % n;
			if (x == i) { cout << i << "  "; j++; }
		} cout << "." << endl; cout <<
         "Number of idempotents = " << j << endl;
		cout << "The units and its inverse are  "<<endl;
		for (y = 1; y < n + 1; ++y) {
			for (i = 1; i < n + 1; ++i) {
				x = (y * i) % n;
	   if (x == 1) { cout << y << " , " << i << endl; k++; }
			}
		} cout << "Number of units = " << k;
		return 0;
}

Programme-4: To find the inverse of an element in Z(n)G;G is a cyclic group of
order m through lamda units

 #include<iostream>
#include<cmath>
using namespace std;
int main() {
	cout << "To find the inverse of an element in Z(n)G;" <<
		"G is a cyclic group of order m through lamda units." << endl;
	int a[100], b[100], l[100], l1[100], w1[100],
	m = 1, t = 0, x = 1, y=1,s = 0, s1 = 0, m1 = 1, hcf = 1, n = 1,
	i = 1, k = 0, q = 1, p = 1, r = 1, w = 1;
		cout << "Enter n =" << endl;
	cin >> n;
	cout << "Enter index m =" << endl;
	cin >> m;
	cout << "Enter  m^(-1) =" << endl;
	cin >> m1;
	cout << "Enter primitive m th root w =" << endl;
	cin >> w;
	for (i = 0; i < m; ++i) { w1[i] = ((long long int)pow(w, i)) % n;
	cout << "w1[" << i << "]=" << w1[i] << endl; }
	cout << "Enter lamda values which have inverse" << endl;
	for (int i = 1; i < m + 1; ++i) {
		cout<< "l[" << i << "]=" << endl;
		cin >> l[i];
	}
	cout << "Enter lamda inverse values " << endl;
	for (int i = 1; i < m + 1; ++i) {
		cout << "l1[" << i << "]=" << endl;
		cin >> l1[i];
	}
		for (int r = 1; r < m + 1; ++r) {
		for (int j = 1; j < m + 1; ++j)
		{
			x = ((j - 1) * (r - 1)) % m;
			k = k + (m1*l[j] * w1[x]) % n; k = k % n;
			// cout << "k=" << k << endl;
		} a[r] = k; cout << "a[" << r << "]=" << a[r] << endl;
		k = 0;
	}
	for (int r = 1; r < m + 1; ++r) {
		for (int j = 1; j < m + 1; ++j)
		{
			x = ((j - 1) * (r - 1)) % m;  cout << "x= " << x << endl;
			k = k + (m1 * l1[j] * w1[x]) % n; k = k % n;
			cout << "k=" << k << endl;
		} b[r] = k; cout << "b[" << r << "]=" << b[r] << endl;

		k = 0;
	}
		cout << "The inverse of a= ";
		s = 1;
	mylabel:
		cout << a[s ] << "g^(" << s-1 << ") + ";
		s++;
		if (s <m+1) goto mylabel; cout << endl;  cout << "is b=";
		s1 = m;
	mylabel1:
		cout << b[m - s1 + 1] << "g^(" << m - s1 << ") + ";
		s1--;
		if (s1 > 0) goto mylabel1; cout << "." << endl;
		cout << "Note: Please neglect the last + as" <<
        "it is unavoidable for a for loop.";
	return 0;
}

Programme-5: To find the idempotent elements in ZnG;G is a cyclic group of order m through
lamda takes idempotent values in Zn
\begin{verbatim}
#include<iostream>
#include<cmath>
using namespace std;
int main() {
	cout << "To find the idempotent elements in Z(n)G;"<<
		"G is a cyclic group of order m through" <<
		"lamda takes idempotent values in Z(n)." << endl;
	int a[100], l[100], w1[100], m = 1, t = 0, x = 1, y = 1, s = 0,
	s1 = 0, m1 = 1, n = 1, i = 1, k = 0,  r = 1, w = 1;

	cout << "Enter n =" << endl;
	cin >> n;
	cout << "Enter index m =" << endl;
	cin >> m;
	cout << "Enter m^(-1) =" << endl;
	cin >> m1;
	cout << "Enter primitive m th root w =" << endl;
	cin >> w;
	for (i = 0; i < m; ++i) { w1[i] = ((long long int)pow(w, i)) % n;
	cout << "w1[" << i << "]=" << w1[i] << endl; }
	cout << "Enter lamda values which are idempotents" << endl;
	for (int i = 1; i < m + 1; ++i) {
	cout << "l[" << i << "]=" << endl;
		cin >> l[i];
	}
		for (int r = 1; r < m + 1; ++r) {
		for (int j = 1; j < m + 1; ++j)
		{
			x = ((j - 1) * (r - 1)) % m;
			k = k + (m1 * l[j] * w1[x]) % n; k = k % n;
			}
      a[r] = k; cout << "a[" << r << "]=" << a[r] << endl;
		k = 0;
	}
	cout << "The idempotent element in RG is  e= ";
	s = 1;
mylabel:
	cout << a[s] << "g^(" << s - 1 << ") + ";
	s++;
	if (s < m + 1) goto mylabel; cout << endl;
	cout << endl;
cout << "Note: Please neglect the last + as it is" <<
"unavoidable for a for loop.";
	return 0;
}

Programme-6: Discrete Fourier Transform

#include <iostream>
#include<cmath>
using namespace std;
int main()
{
cout << "Discrete Fourier Transform" << endl;
unsigned long long int a[200][200], b[200][200],
mult[200][200],	q=1,m=1, n=1, w2=1,w=1, r1, c1, r2,
c2, i, j, k, t=1;
cout << "Enter n,m,w: ";
	cin >> n >> m >> w;
	r1 = m; c1=m;
	r2 = m; c2=1;
	for (i = 0; i < r1; ++i)
	for (j = 0; j < c1; ++j)
		{
			 t = (i*j)%m;
	 if (t == 0) a[i][j] = 1;
	 else
	for (q = 1; q < t + 1; ++q) { w2 = (w2 * w) % n; }
			 a[i][j] = w2; w2 = 1;
		}
	for (i = 0; i < r1; ++i)
		for (j = 0; j < c1; ++j)
	   {
		cout<<"  a"<<i+1<<" "<<j+1<<"="<<	a[i][j] ;
		if (j == c1 - 1)
			cout << endl;
		}
	cout << endl << "Enter coefficient vector of
    the polynomial:" << endl;
	for (i = 0; i < r2; ++i)
	for (j = 0; j < c2; ++j)
		{
    cout << "Enter element f" << i << " = ";
	        cin >> b[i][j];
		}
	for (i = 0; i < r1; ++i)
	for (j = 0; j < c2; ++j)
		{
    		mult[i][j] = 0;
		}
	for (i = 0; i < r1; ++i)
	for (j = 0; j < c2; ++j)
	for (k = 0; k < c1; ++k)
		{
    		mult[i][j] += (a[i][k]) * (b[k][j]);
		}
	cout << endl << "DFT Output: " << endl;
	for (i = 0; i < r1; ++i)
	for (j = 0; j < c2; ++j)
		{
			cout << "F"<< i << "="<< mult[i][j]%n;
	if (j == c2 - 1)
			cout << endl;
		}
	return 0;
        }

Programme-7: Inverse Discrete Fourier Transform

#include <iostream>
#include<cmath>
using namespace std;
int main()
{
	cout << "Inverse Discrete Fourier Transform" << endl;
	unsigned long long int a[100][100], b[100][100],
    mult[100][100], p=1, q=1, l=1, m = 1, m1 = 1, w1 = 1,
    w2=1, n = 1, w = 1, r1, c1, r2, c2, i, j, k,
    c=1,t = 1;
	cout << "Enter n,m, w: ";
	cin >> n >> m >> w;
	for (l = 1; l < n; ++l)
        {
		c = (l * m) % n;
		if (c == 1)
        {
			m1 = l;
		}
	    }
	for (p = 1; p < m; ++p)
    	{
		w1 = (w1 * w) % n;
	    }
	r1 = m; c1 = m;
	r2 = m; c2 = 1;
	for (i = 0; i < r1; ++i)
	for (j = 0; j < c1; ++j)
		{
			t = (i * j) % m;
	if (t == 0) a[i][j] = 1;
	else
	for (q = 1; q < t + 1; ++q) { w2 = (w2 * w1) % n; }
			a[i][j] = w2; w2 = 1;
		}
	for (i = 0; i < r1; ++i)
	for (j = 0; j < c1; ++j)
		{
	cout << "  a" << i + 1 << j + 1 << "=" << a[i][j];
	if (j == c1 - 1)
	cout << endl;
		}
	cout << endl << "Enter DFT vector :" << endl;
	for (i = 0; i < r2; ++i)
	for (j = 0; j < c2; ++j)
		{
	cout << "Enter element F" << i << " = ";
	cin >> b[i][j];
		}
	for (i = 0; i < r1; ++i)
	for (j = 0; j < c2; ++j)
		{
	mult[i][j] = 0;
		}
	for (i = 0; i < r1; ++i)
	for (j = 0; j < c2; ++j)
	for (k = 0; k < c1; ++k)
		{
	mult[i][j] += (a[i][k]) * (m1 * b[k][j]);
   		}
	cout << endl << "Polynomial vector: " << endl;
	for (i = 0; i < r1; ++i)
	for (j = 0; j < c2; ++j)
		{
	cout << "f" << i << "=" << mult[i][j] % n;
	if (j == c2 - 1)
	cout << endl;
		}
	return 0;
        }


\end{verbatim}

\end{document}